\newcommand{\DD}{\mathscr{D}}
\newcommand{\C}{\mathscr{C}}
\newcommand{\V}{\mathscr{V}}
\newcommand{\HHH}{\mathscr{H}}
\newcommand{\N}{\mathbb{N}}
\newcommand{\K}{\mathbb{K}}
\newcommand{\R}{\mathbb{R}}
\newcommand{\X}{\mathbb{X}}
\newcommand{\Y}{\mathscr{Y}}
\newcommand{\bs}{\boldsymbol}
\newcommand{\eps}{\varepsilon}
\newcommand{\Rot}{\nabla\!\times\!}
\newcommand{\lraup}{\relbar\joinrel\rightharpoonup}
\DeclareMathOperator{\supp}{supp}
\begin{document}
	
\title*{Variational and Quasi-Variational Inequalities with Gradient Type Constraints}
\author{Jos\'e Francisco Rodrigues\thanks{JFR acknowledges the hospitality of the Weierstrass Institute in Berlin (WIAS) during a visit when part of this work was developed.} and Lisa Santos}
\institute{Jos\'e Francisco Rodrigues \at CMAFcIO and Departamento de Matem\'atica, Faculdade de Ci\^encias, Universidade de Lisboa, P-1749-016 Lisboa,
Portugal, \email{jfrodrigues@ciencias.ulisboa.pt}
\and Lisa Santos \at CMAT and Departamento de Matem\'atica, Escola de Ci\^encias, Universidade do Minho, Campus de Gualtar,
4710-057 Braga, Portugal, \email{lisa@math.uminho.pt}}

\maketitle
	
\abstract{This survey on stationary and evolutionary problems with gradient constraints is based on developments of monotonicity and compactness methods applied to large classes of scalar and vectorial solutions to variational and quasi-variational inequalities. Motivated by models for critical state problems and applications to free boundary problems in Mechanics and in Physics, in this work several known properties are collected and presented and a few novel results and examples are found.}

\keywords{Variational inequalities, Quasi-variational inequalities, Gradient constraints, Variational methods in Mechanic and in Physics.}

\vspace{2mm}

\noindent{\bf MSC}  35R35, 35J92, 35J87, 35J88, 35K92, 35K86, 35K87, 35L86, 47J20, 47J35, 47N50, 49J40, 74G25, 74H20, 76D99, 78M30, 80M30, 82D55, 82D99.
	
\section{Introduction}

The mathematical analysis of the unilateral problems were initiated in 1964 simultaneously by Fichera, to solve the Signorini problem in elastostatics \cite{Fi64}, and by Stampacchia \cite{St64}, as an extension of the Lax-Milgram lemma with application to the obstacle problem for elliptic equations of second order. The evolution version, coining the expression variational inequalities and introducing weak solutions, was first treated in the pioneer paper of 1966 of Lions and Stampacchia \cite{LionsStamppachia1967}, immediately followed by many others, including the extension to pseudo-monotone operators by Br\'ezis in 1968 \cite{Brezis1968} (see also \cite{Lions1969}, \cite{{BC1978}}, \cite{KinderlehrerStamppachia1980} or \cite{S1997}). The importance of the new concept was soon confirmed by its versatility of their numerical approximations and in the first applications to optimal control of distributed systems in 1966-1968 by Lions and co-workers \cite{Li68} and to solve many problems involving inequalities in Mechanics and Physics, by Duvaut and Lions in their book of 1972 \cite{DuvautLions1972}, as well as several free boundary problems which can be formulated as obstacle type problems (see the books \cite{BC1978}, \cite{KinderlehrerStamppachia1980}, \cite{F82} or \cite{Rodrigues1987}).

Quasi-variational inequalities are a natural extension of the variational inequalities when the convex sets where the solutions are to be found depend on the solutions themselves. They were introduced by Bensoussan and Lions in 1973 to solve impulse control problems \cite{BenLi1982} and were developed, in particular, for certain free boundary problems, as the dam problem by Baiocchi in 1974 (see, for instance \cite{BC1978} and its references), as implicit unilateral problems of obstacle type, stationary or evolutionary \cite{MignotPuel1977},  in which the constraints are only on the solutions. 

While variational inequalities with gradient constraints appeared already to formulate the elastic-plastic torsion problem with an arbitrary cross section in the works of Lanchon, Duvaut and Ting around 1967 (see \cite{DuvautLions1972} or \cite{Rodrigues1987}, for references), the first physical problem with gradient constraints formulated with quasi-variational inequalities of evolution type were proposed for the sandpile growth in 1986 by Prighozhin, in \cite{P86} (see also \cite{Prigozhin1994}). However, only ten years later the first mathematical results appeared, first for variational inequalities, see \cite{Pri1996-1}  and the independent work \cite{AEW1996}, together with a similar one for the magnetisation of type-II superconductors \cite{Pri1996-2}. This last model has motivated a first existence result for the elliptic quasi-variational in \cite{KunzeRodrigues2000}, which included other applications in elastoplasticity and in electrostatics, and was extended to the parabolic framework for the p-Laplacian with an implicit gradient constraint in \cite{RodriguesSantos2000}. This result was later extended to quasi-variational solutions for first order quasilinear equations in \cite{RodriguesSantos2012}, always in the scalar cases, and extended recently to a more general framework in \cite{MirandaRodriguesSantos2018}.  The quasi-variational approach to the sand pile and the superconductors problems, with extensions to the simulation of lakes and rivers, have been successfully developed  also with numerical approximations (see \cite{PrigozhinZaltzman2003}, \cite{BP2006}, \cite{BP2010}, \cite{BarrettPrigozhin2013}, \cite{BP2014}, for instance). 

Although the literature on elliptic variational inequalities with gradient constraints is large and rich, including the issue of the regularity of the solution and their relations with the obstacle problem, it is out of the scope of this work to make its survey. Recent developments on stationary quasi-variational inequalities can be found in \cite{Kenmochi2007}, \cite{MirandaRodriguesSantos2009}, 
 \cite{MKK}, \cite{HintermullerRautenberg2012}, \cite{AzevedoMirandaSantos2013}, \cite{FKS2014}, \cite{KM2016}, \cite{AR2018} and the survey \cite{Kubo2017}.
 
With respect to evolutionary quasi-variational problems with gradient constraint, on one hand, Kenmochi and co-workers, in \cite{Ken2013}, \cite{FukaoKenmochi2013}, \cite{KenmochiNiezgodka2016}, \cite{Kubo2017} and \cite{KuboYamazaki2018}, have obtained interesting results by using variational evolution inclusions in Hilbert spaces with sub-differentials with a non-local dependence on parameters, and on the other hand,  Hinterm\" uller and Rautenberg in \cite{HintermullerRautenberg2013}, using the pseudo-monotonicity and the $\C^0$-semigroup approach of Br\'ezis-Lions,  in \cite{HintermullerRautenberg2017}, using contractive iteration arguments that yield uniqueness results and numerical approximations in interesting but special situations, and in \cite{HintermullerRautenbergStrogies2018}, by time semi-discretisation of a monotone in time problem, have developed interesting numerical schemes that show the potential of the quasi-variational method. Other recent results on evolutionary quasi-variational inequalities can be also found in \cite{KenmochiNiezgodka2016} and \cite{KuboYamazaki2018}, both in more abstract frameworks and oriented to unilateral type problems and, therefore, with limited interest to constraints on the derivatives of the solutions.

This work is divided into two parts on stationary and evolutionary problems, respectively. The first one, after introducing the general framework of partial differential operators of p-Laplacian type and the respective functional spaces, exposes a brief introduction to the well-posedness of elliptic variational inequalities, with precise estimates and the use of the Mosco convergence of convex sets. Next section surveys old and recent results on the Lagrange multiplier problem associated with the gradient constraint, as well as its relation with the double obstacle problem and the complementarity problem. The existence of solutions to stationary quasi-variational inequalities is presented in the two following sections, one by using a compactness argument and the Leray-Schauder principle, extending \cite{KunzeRodrigues2000}, and the other one, for a class of Lipschitz nonlocal nonlinearity, by the Banach fixed point applied to the contractive property of the variational solution map in the case of smallness of data, following an idea of \cite{HintermullerRautenberg2012}. The first part is completed with three physical problems: a nonlinear Maxwell quasi-variational inequality motivated by a superconductivity model; a thermo-elastic system for a locking material in equilibrium and an ionisation problem in electrostatics. The last two problems, although variants of examples of \cite{KunzeRodrigues2000}, are new.

The second part treats evolutionary problems, of parabolic, hyperbolic and degenerate type. The first section treats weak and strong solutions of variational inequalities with time dependent convex sets, following \cite{MirandaRodriguesSantos2018} and giving explicit estimates on the continuous dependence results. The next two sections are, respectively, dedicated to the scalar problems with gradient constraint, relating the original works \cite{Santos1991} and \cite{Santos2002} to the more recent inequality for the transport equation of \cite{RodriguesSantos2015} for the variational case, and to the scalar quasi-variational strong solutions presenting a synthesis of \cite{RodriguesSantos2000} with \cite{RodriguesSantos2012} and an extension to the linear first order problem as a new corollary. The following section, based on \cite{MirandaRodriguesSantos2018},  briefly describes the regularisation penalisation method to obtain the existence of weak solutions by compactness and monotonicity. The next section also develops the method of \cite{HintermullerRautenberg2017} in two concrete functional settings with nonlocal Lipschitz nonlinearities to obtain, under certain explicit conditions, novel results on the existence and uniqueness of strong (and weak) solutions of evolutionary quasi-variational inequalities. Finally, the last section presents also three physical problems with old and new observations, as applications of the previous results, namely on the dynamics of the sandpile of granular material, where conditions for the finite time stabilisation are described, on an evolutionary superconductivity model, in which the threshold is temperature dependent, and a variant of the Stokes flow for a thick fluid, for which it is possible to explicit conditions for the existence and uniqueness of a strong quasi-variational solution.

\section{Stationary problems}

\subsection{A general $p$-framework}

Let $\Omega$ be a bounded open subset of $\R^d$, with a Lipschitz boundary, $d\geq2$. We represent a real vector function by a bold symbol $\bs u=(u_1,\ldots,u_m)$ and we denote  the partial derivative of $u_i$ with respect to $x_j$ by $\partial_{x_j} u_i$.
Given real numbers $a,b$, we set $a\vee b=\max\{a,b\}$. 

For $1< p<\infty$, let L be a linear differential operator of order one in the form
\begin{equation} \label{L}
\text{L}:\bs V_p\to L^p\!(\Omega)^\ell \text{ such that }
(\text{L}\bs u)_i=\sum_{j=1}^d\sum_{k=1}^m\alpha_{ijk} \partial_{x_j}u_k,
\end{equation}
where  $\alpha_{ijk}\in L^\infty(\Omega)$, $i=1,\ldots,\ell$, $j=1,\ldots,d$, $k=1,\ldots,m$, with $\ell, m\in\N$, and $$\bs V_p=\big\{\bs u\in L^p\!(\Omega)^m:\text{L}\bs u\in L^p\!(\Omega)^\ell \big\}$$
is endowed with the graph norm.

We consider a Banach subspace $\X_p$ verifying

\begin{equation}\label{Xp}
\DD(\Omega)^m\subset\X_p\subset W^{1,p}(\Omega)^m\subset\bs V_p
\end{equation}
where
\begin{equation}\label{norm}
\|\bs w\|_{\X_p}=\|\text{L}\bs w\|_{L^p\!(\Omega)^\ell}
\end{equation}
is a norm in $\X_p$ equivalent to the one induced from $\bs V_p.$ In order that \eqref{norm} holds, we suppose there exists $c_p>0$ such that
\begin{eqnarray}\label{poincare}
\|\bs w\|_{L^p\!(\Omega)^m}\le c_p\|\text{L}\bs w\|_{L^p\!(\Omega)^\ell}\qquad \forall\bs w\in \bs V_p.
\end{eqnarray}

To fix ideas, here  the framework \eqref{L} for the operator L  can be regarded as any one of the following cases:
\begin{example} $ $\vspace{0.5mm}

$\mbox{ L} u=\nabla u$ (gradient of $u$), $m=1$, $\ell=d$;\vspace{0.5mm}

$\mbox{ L}\bs u=\Rot\bs u$ (curl of $\bs u$), $m=\ell=d=3$;\vspace{0.5mm}

$\mbox{ L}\bs u=D\bs u = \frac12(\nabla\bs u + \nabla\bs u^T)$ (symmetrised gradient of $\bs u$), $m=d$ and $\ell=d^2$.
\end{example}

When $\text{L}u=\nabla u$, we consider 
$$\X_p=W^{1,p}_0(\Omega)\quad\text{ and }\quad
\|u\|_{\X_p}=\|\nabla u\|_{L^p\!(\Omega)^d}$$ 
is equivalent to the $\bs V_p=W^{1,p}(\Omega)$ norm, by Poincar\'e inequality.

In the case $\text{L}\bs u=\nabla\times\bs u$, for a simply connected domain $\Omega$, the vector space $\X_p$ may be
\begin{equation}
\label{rot_normal0}
\X_p=\big\{\bs w\in L^p\!(\Omega)^3:\nabla\times\bs w\in L^p\!(\Omega)^3,\, \nabla\cdot\bs w=0,\, \bs w\cdot\bs n_{|_{\partial\Omega}}=0\big\},
\end{equation}
or
\begin{equation}
\label{rot_tangencial0}\X_p=\big\{\bs w\in L^p\!(\Omega)^3:\nabla\times\bs w\in L^p\!(\Omega)^3,\, \nabla\cdot\bs w=0,\, \bs w\times\bs n_{|_{\partial\Omega}}=\bs 0\big\},
\end{equation}
corresponding to different boundary conditions, where $\nabla\cdot\bs w$ means the divergence of $\bs w$. Both spaces are closed subspaces of $W^{1,p}(\Omega)^3$ and a Poincar\'e type inequality is satisfied in $\X_p$ (for details see \cite{AmroucheSeloula2010}).

When $\text{L}\bs u=D\bs u$,  we may have
$$\X_p=W^{1,p}_0(\Omega)^d\quad\text{ or }\quad\X_p =W^{1,p}_{0,\sigma}(\Omega)^d=\big\{\bs w\in W^{1,p}_0(\Omega)^d: \nabla\cdot\bs w=0\big\}$$
and $\|D\bs w\|_{L^p\!(\Omega)^{d^2}}$ is equivalent to the norm induced from $W^{1,p}(\Omega)^d$ by Poincar\'e and Korn's inequalities.

Given $\nu>0$, we introduce 
\begin{equation}\label{infinito_nu}
L^\infty_\nu(\Omega)=\big\{w\in L^\infty(\Omega): w\ge\nu\big\}.
\end{equation} 
For $G:\X_p\rightarrow L^\infty_\nu(\Omega)$, we define the nonempty closed convex set
\begin{equation}\label{kapa}
\K_{G[\bs u]}=\big\{\bs w\in\X_p:|\text{L}\bs w|\le G[\bs u]\big\},
\end{equation}
where $|\cdot|$ is the Euclidean norm in $\R^\ell$ and we denote, for $\bs w\in\bs V_p$,
\begin{equation}\label{L_cortado}
\L_p\bs u=|\text{L}\bs w|^{p-2}\text{L}\bs w.
\end{equation}
We may associate with $\L_p$ a strongly monotone operator, and there exist  positive constants $d_p$ such that for all $\bs w_1,\bs w_2\in\bs V_p$
\begin{multline}\label{monotone}
\int_\Omega\big(\L_p\bs w_1-\L_p\bs w_2\big)\cdot \text{L}(\bs w_1-\bs w_2)\\
\ge\begin{cases}d_p	\displaystyle\int_\Omega|\text{L}(\bs w_1-\bs w_2)|^p&\text{ if }p\ge2,\vspace{1mm}\\
d_p\displaystyle\int_\Omega\big(|\text{L}\bs w_1|+|\text{L}\bs w_2|\big)^{p-2}|\text{L}(\bs w_1-\bs w_2)|^2&\text{ if }1\le p<2.
\end{cases}
\end{multline}

For  $1< p< \infty$ and $\bs f\in L^1(\Omega)^m$,  we shall consider the quasi-variational inequality
\begin{equation}\label{iqv}
\bs u\in \K_{G[\bs u]}:\quad\int_{\Omega} \L_p\bs u\cdot \text{L}(\bs w-\bs u)\ge\displaystyle\int_{\Omega}\bs f\cdot(\bs w-\bs u)\quad\forall\,\bs w\in\K_{G[\bs u]}.
\end{equation}

\subsection{Well-posedness of the variational inequality}

For $g\in L^\infty_\nu(\Omega)$, it is well-know that the variational inequality, which is obtained by taking $G[\bs u]\equiv g$ in \eqref{kapa} and in \eqref{iqv}, 
\begin{equation}\label{iv}
\bs u\in \K_{g}:\quad\int_{\Omega} \L_p\bs u\cdot \text{L}(\bs w-\bs u)\ge\displaystyle\int_{\Omega}\bs f\cdot(\bs w-\bs u)\quad\forall\,\bs w\in\K_{g},
\end{equation}
has a unique solution (see, for instance, \cite{Lions1969} or \cite{KinderlehrerStamppachia1980}). The solution is, in fact, H\"older continuous on $\overline\Omega$ by recalling the (compact) Sobolev imbeddings
\begin{equation}\label{sobolev}
W^{1,p}(\Omega)\hookrightarrow\begin{cases} L^q(\Omega)\ \text{ for }1\le q< \frac{dp}{d-p}, \ \ \text{ if }p<d\vspace{1mm}\\
L^r(\Omega)\ \text{ for }1\le r<\infty\ \ \text{ if }p=d,\vspace{1mm}\\
\C^{0,\alpha}(\overline\Omega)\ \text{ for }0\le \alpha<1-\frac{d}{p}\ \ \text{ if }p>d.
\end{cases}
\end{equation}

Indeed, in the three examples above we have, for any $p>d$ and $0\le\alpha<1-\frac{d}{p}$,
\begin{equation}
\label{holder}
\K_g\subset W^{1,p}(\Omega)^m\subset \C^{0,\alpha}(\overline\Omega)^m.
\end{equation}

We note that, even if L$\bs u$ is bounded in $\Omega$, in general, this does not imply that the solution $\bs u$ of \eqref{iv} is Lipschitz continuous. However, this holds, for instance, not only in the scalar case L$=\nabla$, but, more generally if in \eqref{L} $m=1$ and $\alpha_{ij}=\eta_i\delta_{ij}$ with $\eta_i\in L_\nu^\infty(\Omega)$, $i=1,\ldots,d$ and $\delta_{ij}$  the Kronecker symbol.

We present now two continuous dependence results on the data. In particular, when \eqref{holder} holds, any solution to \eqref{iv} or \eqref{iqv} is a priori continuously bounded and therefore we could take not only $\bs f\in L^1(\Omega)^m$ but also $\bs f$ in the space of Radon measures.

\begin{theorem} \label{fest} Under the framework \eqref{L},  \eqref{Xp} and \eqref{norm} let $\bs f_1$ and $\bs f_2$ belong to $L^1(\Omega)^m$ and $g\in L^\infty_\nu(\Omega)$. Denote by $\bs u_i$, $i=1,2$, the solutions of the variational inequality \eqref{iv} with data $(\bs f_i,g)$. Then
\begin{equation}\label{f1f2_1}
\|\bs u_1-\bs u_2\|_{\X_p}\le C\|\bs f_1-\bs f_2\|_{L^1(\Omega)^m}^{\frac1{p\vee 2}},
\end{equation}
being $C$ a positive constant depending on $p$, $\Omega$ and $\|g\|_{L^\infty(\Omega)}$.
\end{theorem}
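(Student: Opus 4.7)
The plan is to test each variational inequality with the other solution (both $\bs u_1,\bs u_2$ lie in the common convex set $\K_g$) and subtract. Since every $\bs w\in\K_g$ is admissible for either problem, choosing $\bs w=\bs u_2$ in the inequality for $\bs u_1$ and $\bs w=\bs u_1$ in the inequality for $\bs u_2$ and adding yields the standard monotonicity identity
$$\int_\Omega(\L_p\bs u_1-\L_p\bs u_2)\cdot\text{L}(\bs u_1-\bs u_2)\le\int_\Omega(\bs f_1-\bs f_2)\cdot(\bs u_1-\bs u_2).$$
The strategy is now to bound the right-hand side by $\|\bs f_1-\bs f_2\|_{L^1}$ times an a priori $L^\infty$-estimate for elements of $\K_g$, and to invoke the strong monotonicity \eqref{monotone} on the left.

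The $L^\infty$-bound comes directly from the gradient constraint. Every $\bs u\in\K_g$ satisfies $|\text{L}\bs u|\le g$, so $\text{L}\bs u\in L^q(\Omega)^\ell$ for every $q<\infty$ with norm bounded by $\|g\|_{L^\infty}|\Omega|^{1/q}$. Fixing some $q>d$, the Poincar\'e-type inequality \eqref{poincare} applied at exponent $q$ places $\bs u\in\X_q$, and the Sobolev embedding \eqref{sobolev} then gives $\|\bs u\|_{L^\infty}\le C(\Omega,q)\|g\|_{L^\infty}$. Hence the right-hand side is controlled by $C\|g\|_{L^\infty}\|\bs f_1-\bs f_2\|_{L^1}$ with $C$ independent of the $\bs u_i$.

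If $p\ge 2$, the lower bound in \eqref{monotone} is simply $d_p\|\text{L}(\bs u_1-\bs u_2)\|_{L^p}^p$, and \eqref{f1f2_1} follows at once with exponent $1/p=1/(p\vee 2)$. The delicate case is $1<p<2$, where \eqref{monotone} only produces the degenerate weighted quantity $I:=\int_\Omega(|\text{L}\bs u_1|+|\text{L}\bs u_2|)^{p-2}|\text{L}(\bs u_1-\bs u_2)|^2$. To recover an $L^p$-control of $\text{L}(\bs u_1-\bs u_2)$, I would apply the standard H\"older interpolation with conjugate exponents $2/p$ and $2/(2-p)$, factoring the integrand as
$$|\text{L}(\bs u_1-\bs u_2)|^p=\bigl[(|\text{L}\bs u_1|+|\text{L}\bs u_2|)^{p-2}|\text{L}(\bs u_1-\bs u_2)|^2\bigr]^{p/2}\,(|\text{L}\bs u_1|+|\text{L}\bs u_2|)^{p(2-p)/2},$$
so that $\|\text{L}(\bs u_1-\bs u_2)\|_{L^p}^p\le I^{p/2}\bigl(\int(|\text{L}\bs u_1|+|\text{L}\bs u_2|)^p\bigr)^{(2-p)/2}$. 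The second factor is harmlessly bounded in terms of $\|g\|_{L^\infty}$ and $|\Omega|$, so the final exponent on $\|\bs f_1-\bs f_2\|_{L^1}$ becomes $1/2=1/(p\vee 2)$.

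The main obstacle is the singular range $1<p<2$: the degeneracy of the $p$-monotonicity is precisely what forces the loss from the naive exponent $1/p$ to $1/(p\vee 2)$, and the H\"older interpolation above is what extracts the correct power. A minor technical point is the vanishing set of $|\text{L}\bs u_1|+|\text{L}\bs u_2|$, which is harmless because $\text{L}(\bs u_1-\bs u_2)$ vanishes there as well, so the interpolation can be justified by integrating over $\{|\text{L}\bs u_1|+|\text{L}\bs u_2|>\eps\}$ and letting $\eps\downarrow 0$.
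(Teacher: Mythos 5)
Your proposal is correct and follows essentially the same route as the paper: cross-testing the two inequalities, invoking the strong monotonicity \eqref{monotone}, and using the uniform bound $|\text{L}\bs u_i|\le \|g\|_{L^\infty(\Omega)}$ together with the resulting $L^\infty$-bound on $\bs u_i$ to control the right-hand side by $\|\bs f_1-\bs f_2\|_{L^1(\Omega)^m}$. The only (cosmetic) difference is in the subquadratic case: you recover the $L^p$-norm by H\"older interpolation of the weighted quantity, whereas the paper bounds the weight $(|\text{L}\bs u_1|+|\text{L}\bs u_2|)^{p-2}$ below by $(2M)^{p-2}$ pointwise to get a full $L^2$-estimate and then uses $\|\cdot\|_{L^p}\le\omega_p\|\cdot\|_{L^2}$ on the bounded domain; both yield the exponent $\tfrac12$ with comparable constants.
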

\begin{proof}
We use $\bs u_2$ as test function in the variational inequality \eqref{iv} for $\bs u_1$ and reciprocally, obtaining, after summation,
$$\int_\Omega \big(\L_p\bs u_1-\L_p\bs u_2\big)\cdot \text{L}(\bs u_1-\bs u_2)\le\int_\Omega(\bs f_1-\bs f_2)\cdot(\bs u_1-\bs u_2).$$

For $p\ge 2$, using \eqref{monotone}, since $\bs u_i\in L^\infty(\Omega)^m$,  we have
$$\|\bs u_1-\bs u_2\|_{\X_p}\le C\|\bs f_1-\bs f_2\|_{L^1(\Omega)^m}^\frac1p.$$
If $1\le p<2$, using \eqref{monotone} and $|\text{L}\bs u_i|\le M$, where $M=\|g\|_{L^\infty(\Omega)}$, we have first
$$d_p\big(2M\big)^{p-2}\int_\Omega|\text{L}(\bs u_1-\bs u_2)|^2\le\int_\Omega(\bs f_1-\bs f_2)\cdot(\bs u_1-\bs u_2)$$
and then, with $\omega_p=|\Omega|^\frac{2-p}{2p}$,
$$\|\bs u_1-\bs u_2\|_{\X_p}\le \omega_p\|\bs u_1-\bs u_2\|_{\X_2}\le C|\bs f_1-\bs f_2\|_{L^1(\Omega)^m}^\frac12,$$
concluding the proof.
\qed \end{proof}

\begin{remark} Since $|\text{L}\bs u_i|\le M$ we can always extend \eqref{f1f2_1} for any $r>d$, obtaining for some positive constants $C_\alpha>0, C_r>0$ and $\alpha=1-\frac{d}{r}>0$,
$$\|\bs u_1-\bs u_2\|_{\C^\alpha(\overline\Omega)^m}\le C_\alpha \|\bs u_1-\bs u_2\|_{\X_r}\le C_r\|\bs f_1-\bs f_2\|_{L^1(\Omega)^m}^\frac1r.$$
Indeed, it is sufficient to use the Sobolev imbedding and to observe that, for $r>p$,
$$\int_\Omega|\text{L}(\bs u_1-\bs u_2)|^r\le(2M)^{r-p}\int_\Omega|\text{L}(\bs u_1-\bs u_2)|^p.$$
\end{remark}

\begin{theorem}\label{gest}  Under the framework \eqref{L},  \eqref{Xp} and \eqref{norm} let $\bs f\in L^{1}(\Omega)^m$ and $g_1, g_2\in L^\infty_\nu(\Omega)$. Denote by $\bs u_i$, $i=1,2$, the solutions of the variational inequality \eqref{iv} with data $(\bs f,g_i)$. Then
\begin{equation}\label{cnu}
\|\bs u_1-\bs u_2\|_{\X_p}\le C_\nu\|g_1-g_2\|_{L^\infty(\Omega)}^\frac{1}{p\vee2}.
\end{equation}
\end{theorem}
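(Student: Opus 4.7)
The difficulty compared with Theorem \ref{fest} is that now the convex sets differ: $\bs u_2\in\K_{g_2}$ is not admissible as a test function in the variational inequality for $\bs u_1$ because in general $|\text{L}\bs u_2|\le g_2\not\le g_1$. The plan is to construct a suitable scaling that restores admissibility and that is close to the identity when $\|g_1-g_2\|_{L^\infty(\Omega)}$ is small.

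Set $\delta=\|g_1-g_2\|_{L^\infty(\Omega)}$. If $\delta\ge\nu$ the estimate \eqref{cnu} is trivial, because $|\text{L}\bs u_i|\le\|g_i\|_{L^\infty(\Omega)}\le M$ and Poincaré give a uniform bound on $\|\bs u_1-\bs u_2\|_{\X_p}$ that can be absorbed into $C_\nu$ (with $M=\|g_1\|_{L^\infty(\Omega)}\vee\|g_2\|_{L^\infty(\Omega)}$). So assume $\delta<\nu$ and set
$$t=1-\tfrac{\delta}{\nu}\in(0,1).$$
From $g_i\ge\nu$ and $|g_1-g_2|\le\delta$ one checks $tg_2\le g_1$ and $tg_1\le g_2$ pointwise, so $t\bs u_2\in\K_{g_1}$ and $t\bs u_1\in\K_{g_2}$.

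Now use $\bs w=t\bs u_2$ in \eqref{iv} for $\bs u_1$ and $\bs w=t\bs u_1$ in \eqref{iv} for $\bs u_2$, and add. Writing $\text{L}(t\bs u_2-\bs u_1)=-\text{L}(\bs u_1-\bs u_2)-(1-t)\text{L}\bs u_2$ and analogously for the other term, the sum rearranges into
$$\int_\Omega(\L_p\bs u_1-\L_p\bs u_2)\cdot\text{L}(\bs u_1-\bs u_2)\le(1-t)\left[\int_\Omega\bigl(\L_p\bs u_1\cdot\text{L}\bs u_2+\L_p\bs u_2\cdot\text{L}\bs u_1\bigr)-\int_\Omega\bs f\cdot(\bs u_1+\bs u_2)\right].$$
Since $|\text{L}\bs u_i|\le M$, one has $|\L_p\bs u_i\cdot\text{L}\bs u_j|\le M^p$, and by \eqref{poincare} $\|\bs u_i\|_{L^p(\Omega)^m}\le c_pM|\Omega|^{1/p}$, so the bracket is bounded by a constant depending only on $p$, $\Omega$, $M$ and $\|\bs f\|_{L^1(\Omega)^m}$. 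Since $1-t=\delta/\nu$, the right-hand side is $\le C\nu^{-1}\|g_1-g_2\|_{L^\infty(\Omega)}$.

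To conclude, invoke the monotonicity inequality \eqref{monotone}. For $p\ge2$ this directly yields $\|\bs u_1-\bs u_2\|_{\X_p}^p\le C\nu^{-1}\|g_1-g_2\|_{L^\infty(\Omega)}$, i.e.\ the exponent $1/p$. For $1<p<2$, using $|\text{L}\bs u_i|\le M$ so that $(|\text{L}\bs u_1|+|\text{L}\bs u_2|)^{p-2}\ge(2M)^{p-2}$, one first obtains an $L^2$ control $\|\bs u_1-\bs u_2\|_{\X_2}^2\le C\nu^{-1}\|g_1-g_2\|_{L^\infty(\Omega)}$ and then returns to $\X_p$ by Hölder's inequality as in the proof of Theorem \ref{fest}, producing the exponent $1/2=1/(p\vee2)$. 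The main obstacle is the first step: building test functions that respect both constraints simultaneously. This is precisely where the positive lower bound $\nu$ on $g_i$ is essential, as it controls the deterioration of the scaling factor $t$.
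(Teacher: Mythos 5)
This is essentially the paper's proof: the paper scales by $\nu/(\nu+\beta)$ with $\beta=\|g_1-g_2\|_{L^\infty(\Omega)}$, which maps $\K_{g_i}$ into $\K_{g_j}$ for \emph{every} $\beta\ge0$ and so avoids your case split at $\delta\ge\nu$, but the cross-testing with scaled solutions, the bound of the remainder by $(1-t)\le\beta/\nu$ times a constant, and the final appeal to \eqref{monotone} (with the H\"older step for $1<p<2$) are the same. Two harmless slips: the sign of your bracketed term is reversed (irrelevant, since you estimate it in absolute value), and the pairing of $\bs f\in L^1(\Omega)^m$ with $\bs u_1+\bs u_2$ needs the $L^\infty$ bound on $\bs u_i$ that follows from $|\text{L}\bs u_i|\le M$ and \eqref{holder}, not the $L^p$ Poincar\'e bound you invoke.
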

\begin{proof}
Calling $\beta=\|g_i-g_j\|_{L^\infty(\Omega)}$, $i,j\in\{1,2\}$, $i\neq j$, then
$$\bs u_{i_j}=\frac{\nu}{\nu+\beta}\bs u_i\in\K_{g_j},$$
and $\bs u_{i_j}$ can be used as test function in the variational inequality \eqref{iv} satisfied by $\bs u_i$, obtaining
$$\int_\Omega\big(\L_p\bs u_1-\L_p\bs u_2\big)\cdot L(\bs u_1-\bs u_2)\le\int_\Omega \text{L}\bs u_1\cdot L(\bs u_{2_1}-\bs u_2)+
\int_\Omega \text{L}\bs u_2\cdot L(\bs u_{1_2}-\bs u_1).$$
But
$$|\text{L}(\bs u_{i_j}-\bs u_i)|=\frac{\beta}{\nu+\beta}|\text{L}\bs u_i|\le\frac{\beta M}{\nu},$$
where $M=\max\{\|g_1\|_{L^\infty(\Omega)},\|g_2\|_{L^\infty(\Omega)}\}$ and the conclusion follows.
\qed \end{proof}

We can also consider a degenerate case, by letting $\delta\rightarrow0$ in
\begin{equation}
\label{ivdelta0}
\bs u^\delta\in\K_g:\quad\delta\int_{\Omega}\L_p\bs u^\delta\cdot\text{L}(\bs w-\bs u^\delta)\ge\int_{\Omega}\bs f\cdot(\bs w-\bs u^\delta)\quad\forall\bs v\in\K_g.
\end{equation}
Indeed, since $\|L\bs u^\delta\|_{L^\infty(\Omega)}\le M$, where $M=\|g\|_{L^\infty(\Omega)}$, independently of $0<\delta\le1$, we can extract a subsequence
$$\bs u^\delta\underset{\delta\rightarrow0}{\lraup}\bs u^0\quad\text{ in }\X_p\text{-weak}$$
for some $\bs u^0\in\K_g$. Then, we can pass to the limit in \eqref{ivdelta0} and we may state:
\begin{theorem} Under the framework \eqref{L},  \eqref{Xp} and \eqref{norm}, for any $\bs f\in L^1(\Omega)^m$, there exists at least a solution $\bs u^0$ to the problem
\begin{equation}\label{ivdeg}
\bs u\in\K_g:\quad 0\ge\int_\Omega\bs f\cdot(\bs w-\bs u)\quad\forall\bs w\in\K_g.
\end{equation}\qed
\end{theorem}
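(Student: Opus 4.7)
The plan is to exploit the regularised family $(\bs u^\delta)_{0<\delta\le 1}$ described in the paragraph preceding the theorem, extract a weak limit, and pass to the limit in \eqref{ivdelta0}. For each fixed $\delta\in(0,1]$, dividing \eqref{ivdelta0} by $\delta$ yields exactly \eqref{iv} with right-hand side $\bs f/\delta$, so the well-posedness of \eqref{iv} recalled at the start of Section~2.2 guarantees a unique $\bs u^\delta\in\K_g$. Membership in $\K_g$ provides the key bound $\|\text{L}\bs u^\delta\|_{L^\infty(\Omega)^\ell}\le M:=\|g\|_{L^\infty(\Omega)}$, uniform in $\delta$.

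Boundedness in $\X_p$ and reflexivity then give a subsequence with $\bs u^\delta\lraup \bs u^0$ in $\X_p$-weak, and $\bs u^0\in\K_g$ because $\K_g$ is convex and norm-closed, hence weakly closed. To handle the right-hand side below for $\bs f\in L^1(\Omega)^m$, I would upgrade this to strong uniform convergence: since $\|\text{L}\bs u^\delta\|_{L^r(\Omega)^\ell}\le M|\Omega|^{1/r}$ for every $r>d$, applying \eqref{poincare} at exponent $r$ together with the inclusion from \eqref{Xp} gives a uniform bound in $W^{1,r}(\Omega)^m$. The compact Sobolev embedding $W^{1,r}(\Omega)^m\hookrightarrow \C^{0,\alpha}(\overline\Omega)^m$ listed in \eqref{sobolev} then extracts, along a further subsequence, $\bs u^\delta\to \bs u^0$ uniformly on $\overline\Omega$.

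The final step is the passage to the limit in \eqref{ivdelta0}. For the left-hand side, $|\L_p\bs u^\delta|\le M^{p-1}$ and, since $\bs w\in\K_g$, $|\text{L}(\bs w-\bs u^\delta)|\le 2M$, so the integral $\int_\Omega\L_p\bs u^\delta\cdot\text{L}(\bs w-\bs u^\delta)$ is bounded by $2M^p|\Omega|$ independently of $\delta$, and multiplication by $\delta\to 0$ sends it to zero. For the right-hand side, the uniform convergence on $\overline\Omega$ combined with $\bs f\in L^1(\Omega)^m$ yields $\int_\Omega\bs f\cdot(\bs w-\bs u^\delta)\to\int_\Omega\bs f\cdot(\bs w-\bs u^0)$ by dominated convergence, producing \eqref{ivdeg}.

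I expect the main obstacle to be the right-hand side: $\bs f\in L^1$ does not pair with any $L^s$-weak topology for $s<\infty$, so weak $\X_p$-convergence alone is insufficient. The bootstrap from the pointwise $L^\infty$-bound on $\text{L}\bs u^\delta$ to a uniform $W^{1,r}$-bound for $r>d$, and from there to uniform convergence via the compact Sobolev embedding, is what rescues the argument.
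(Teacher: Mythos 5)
Your proposal follows essentially the same route as the paper: the theorem is obtained there precisely by letting $\delta\to0$ in \eqref{ivdelta0}, using the uniform bound $\|\text{L}\bs u^\delta\|_{L^\infty(\Omega)^\ell}\le\|g\|_{L^\infty(\Omega)}$ to extract a weak limit $\bs u^0\in\K_g$ and passing to the limit. Your bootstrap through the uniform $W^{1,r}$-bound ($r>d$) and the compact embedding into $\C^{0,\alpha}(\overline\Omega)^m$ correctly supplies the detail the paper leaves implicit (and had already signalled via \eqref{holder}) for pairing with $\bs f\in L^1(\Omega)^m$.
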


In general, the strict positivity condition on the threshold $g=g(x)$, which is included in \eqref{infinito_nu}, is necessary in many interesting results, as the continuous dependence result \eqref{cnu}, which can also be obtained in a weaker form by using the Mosco convergence and observing that
$$\K_{g_n}\stackrel{M\ }{\underset{n}{\longrightarrow}}\K_g\quad \text{ is implied by }g_n\underset{n}{\longrightarrow}g\ \text{ in }L^\infty(\Omega).$$

We recall that $\K_{g_n}\stackrel{M\ }{\underset{n}{\longrightarrow}}\K_g$ iff i) for any sequences $\K_{g_n}\ni w_n\underset{n}{\lraup}w$ in $\X_p$-weak, then $w\in\K_g$ and ii) for any $w\in\K_g$ there exists $w_n\in\K_{g_n}$ such that $w_n\underset{n}{\longrightarrow}w$ in $\X_p$.

However, the particular structure of the scalar case L=$\nabla$ in $\X_p=W^{1,p}_0(\Omega)$, i.e., with $\L_p=\nabla_{\!\!\!p}\, $ and a Mosco convergence result of \cite{AzevedoSantos2004} allows us to extend the continuous dependence of the solutions of the variational inequality with nonnegative continuous gradient constraints, as an interesting result of Mosco type (see \cite{Mosco1969}).

\begin{theorem} Let $\Omega$ be an open domain with a $\C^2$ boundary, {\em L}$=\nabla$, $f\in L^{p'}\!\!(\Omega)$ and $g_\infty,g_n\in\C(\overline\Omega)$, with $g_n\ge0$ for $n\in\N$ and $n=\infty$. If $u_n$ denotes the unique solution to
\begin{equation}\label{mosco0}
u_n\in \K_{g_n}:\quad\int_{\Omega} \nabla_{\!\!\!p}\,  u_n\cdot \nabla( w-u_n)\ge\displaystyle\int_{\Omega}f\cdot(w-u_n)\quad\forall\,\ w\in\K_{g_n}
\end{equation}
then, as $n\rightarrow\infty$, $g_n\underset{n}{\longrightarrow}g_\infty$ in $\C(\overline\Omega)$ implies $u_n\underset{n}{\longrightarrow}u_\infty$ in $W^{1,p}_0(\Omega)$.
\end{theorem}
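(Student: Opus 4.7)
The plan is to combine a weak compactness argument in $W^{1,p}_0(\Omega)$ with the Mosco convergence $\K_{g_n}\stackrel{M\ }{\underset{n}{\longrightarrow}}\K_{g_\infty}$ guaranteed by \cite{AzevedoSantos2004}, and then upgrade weak to strong convergence by a Minty-type monotonicity argument.

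First I would extract a weak limit. Since $g_n \to g_\infty$ in $\C(\overline\Omega)$, the sequence $\|g_n\|_{L^\infty(\Omega)}$ is uniformly bounded by some $M$, hence $|\nabla u_n| \le M$ almost everywhere. The family $(u_n)$ is therefore bounded in $W^{1,\infty}(\Omega)\cap W^{1,p}_0(\Omega)$ and, along a subsequence, $u_n\underset{n}{\lraup}u^*$ in $W^{1,p}_0(\Omega)$-weak with $u_n\to u^*$ strongly in $L^p(\Omega)$. By the Mosco property (i) one has $u^*\in\K_{g_\infty}$, and property (ii) yields, for every $v\in\K_{g_\infty}$, a recovery sequence $v_n\in\K_{g_n}$ with $v_n\to v$ strongly in $W^{1,p}_0(\Omega)$.

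Next I would pass to the limit in \eqref{mosco0} tested against such a recovery sequence $v_n$. The family $\L_p u_n$ is bounded in $L^{p'}(\Omega)^d$, so along a further subsequence $\L_p u_n\underset{n}{\lraup}\chi$ weakly. Pairing weakly convergent fluxes with strongly convergent test gradients, and using $u_n\to u^*$ in $L^p(\Omega)$ for the right-hand side, the limit inequality reads
\begin{equation*}
\int_\Omega \chi\cdot\nabla(v-u^*)\ge \int_\Omega f(v-u^*),\qquad\forall\, v\in\K_{g_\infty}.
\end{equation*}
To identify $\chi=\L_p u^*$ I would invoke Minty's device: testing the $n$-th variational inequality with a recovery sequence $v_n\to u^*$ of $u^*$ itself and exploiting the strong convergence $\L_p v_n\to \L_p u^*$ in $L^{p'}(\Omega)^d$ (by the uniform $L^\infty$ bound on $|\nabla v_n|$ and dominated convergence) gives
\begin{equation*}
\limsup_n \int_\Omega \L_p u_n\cdot\nabla(u_n-v_n)\le 0.
\end{equation*}
Combined with the strong monotonicity \eqref{monotone}, this delivers $\nabla u_n\to\nabla u^*$ in $L^p(\Omega)^d$ for $p\ge 2$, and the same conclusion for $1<p<2$ after inserting a H\"older step that uses the uniform bound $|\nabla u_n|+|\nabla v_n|\le 2M$. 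Strong convergence of the gradients then pins down $\chi=\L_p u^*$, so $u^*$ solves the limit variational inequality; by uniqueness of its solution, $u^*=u_\infty$ and the whole sequence converges.

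The main obstacle is the Mosco property (ii) when $g_\infty$ is allowed to vanish: the customary rescaling $v_n=\tfrac{g_n}{g_n+\|g_n-g_\infty\|_{L^\infty(\Omega)}}\,v$, or any variant based on multiplying by a scalar close to $1$, loses its meaning without a uniform positive lower bound on the thresholds, so the construction of the recovery sequence must be handled by the delicate argument of \cite{AzevedoSantos2004}, which exploits the $\C^2$-regularity of $\partial\Omega$ and the joint continuity of $g_n$ and $g_\infty$ near the degeneracy set $\{g_\infty=0\}$. Once this recovery sequence is in hand, the rest is a routine combination of weak compactness with the Minty trick.
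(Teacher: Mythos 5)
Your proposal is correct and follows essentially the same route as the paper: Mosco convergence of $\K_{g_n}$ from Theorem 3.12 of \cite{AzevedoSantos2004}, the uniform bound $|\nabla u_n|\le \|g_n\|_{\C(\overline\Omega)}$ giving weak compactness, a Minty-type monotonicity argument with recovery sequences to identify the limit, and the same monotonicity computation for strong convergence of the gradients. The only difference is cosmetic ordering — the paper identifies $u_*=u_\infty$ via Minty's lemma (placing $\nabla_{\!\!\!p}$ on the strongly convergent test functions) before proving strong convergence, whereas you establish strong convergence of $\nabla u_n$ first and then pass to the limit in the inequality directly — and your remark about why the recovery sequence cannot be built by scalar rescaling when $g_\infty$ may vanish correctly pinpoints why the $\C^2$-boundary result of \cite{AzevedoSantos2004} is needed.
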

\begin{proof} By Theorem 3.12 of \cite{AzevedoSantos2004}, we have $\K_{g_n}\stackrel{M\ }{\underset{n}{\longrightarrow}}\K_{g_\infty}$. Since $|\nabla u_n|\le g_n$ in $\Omega$, we have $\|u_n\|_{W^{1,p}_0(\Omega)}\le C|\Omega|^\frac1p\|g_n\|_{\C(\overline\Omega)}\le M$ independently of $n$ and, therefore, we may take a subsequence $u_n\underset{n}{\lraup}u_*$ in $W^{1,p}_0(\Omega)$.
Then $u_*\in\K_{g_\infty}$. For any $w_\infty\in\K_{g_\infty}$, take $w_n\in\K_{g_n}$ with $w_n\underset{n}{\longrightarrow}w_\infty$ in $W^{1,p}_0(\Omega)$ and, using Minty's Lemma and letting $n\rightarrow\infty$ in
$$\int_{\Omega}\nabla_{\!\!\!p}\,  w_n\cdot\nabla(w_n-u_n)\ge \int_{\Omega}f(w_n-u_n)$$
we conclude that $u_*=u_\infty$ is the unique solution of \eqref{mosco0} for $n=\infty$.
The strong convergence follows easily, by choosing $v_n\underset{n}{\longrightarrow}u_\infty$ with $v_n\in\K_{g_n}$, from
$$\int_{\Omega}|\nabla(u_n-u_\infty)|^p\le\int_{\Omega}f(u_n-v_n)+\int_{\Omega}\nabla_{\!\!\!p}\, u_n\cdot\nabla(v_n-u_\infty)-\int_{\Omega}\nabla_{\!\!\!p}\, u_\infty\cdot\nabla(u_n-u_\infty)\underset{n}{\rightarrow}0.$$
\qed\end{proof}

\subsection{Lagrange multipliers}\label{2.3}

In the special case $p=2$, $\L_2=\text{L},$ consider the variational inequality ($\delta>0$)
\begin{equation}\label{ivdelta}
\bs u^\delta\in \K_{g}:\quad\delta\int_{\Omega} \text{L}\bs u^\delta\cdot \text{L}(\bs w-\bs u^\delta)\ge\displaystyle\int_{\Omega}\bs f\cdot(\bs w-\bs u^\delta)\quad\forall\,\bs w\in\K_{g}
\end{equation}
and the related Lagrange multiplier problem, which is equivalent to the problem of finding $(\lambda^\delta,\bs u^\delta)\in\,\big( L^\infty(Q_T)^m\big)'\times\X_\infty$ such that
\begin{subequations}\label{weaklm}
\begin{equation}\label{weaklm1}
\langle\lambda^\delta \text{L}\bs u^\delta,\text{L}\bs\varphi\rangle_{(L^\infty(\Omega)^m)'\times L^\infty(\Omega)^m}= \int_\Omega \bs f\cdot\bs\varphi\quad\forall\bs\varphi\in\X_\infty,
\end{equation}
\begin{equation}\label{weaklm2}
|\text{L}\bs u^\delta|\le g\ \text{ a.e. in }\Omega,\quad\lambda^\delta\ge \delta,\quad(\lambda^\delta-\delta)(|\text{L}\bs u^\delta|-g)=0\quad \text{ in }\big(L^\infty(\Omega)^m\big)',
\end{equation}
\end{subequations}
where we set $\X_\infty=\big\{\bs \varphi\in L^2(\Omega)^m:\text{L}\bs\varphi\in L^\infty(\Omega)^\ell\big\}$ and define
$$\langle\lambda\bs\alpha,\bs\beta\rangle_{(L^\infty(\Omega)^m)'\times L^\infty(\Omega)^m}=\langle\lambda,\bs\alpha\cdot\bs\beta\rangle_{L^\infty(\Omega)'\times L^\infty(\Omega)}\quad\forall\lambda\in L^\infty(\Omega)'\ \forall\bs\alpha,\bs\beta\in L^\infty(\Omega)^m.$$
In fact, arguing as in  \cite[Theorem 1.3]{AzevedoSantos2017}, which corresponds only to the particular scalar case $\text{L}=\nabla$, we can prove the following theorem:
\begin{theorem}\label{thm_td_weak}  Suppose that $\Omega$ is a bounded open subset of $\,\R^d$ with Lipschitz boundary and   the assumptions \eqref{L} and \eqref{Xp} are satisfied with $p=2$.  Given $\bs f\in L^2(\Omega)^m$ and $g\in L^\infty_\nu(\Omega)$,
\begin{enumerate}
\item  if $\delta>0$, problem \eqref{weaklm} has a solution
$$(\lambda^\delta, u^\delta)\in L^{\infty}(\Omega)'\times  \X_\infty;$$
\item at least for a subsequence  $(\lambda^\delta, u^\delta)$ of solutions of problem \eqref{weaklm}, we have
\begin{equation*}
\lambda^\delta\underset{\delta\rightarrow0}{\lraup}\lambda^0\quad\text{in}\quad L^\infty(\Omega)',\qquad u^\delta\underset{\delta\rightarrow0}{\lraup}u^0\quad\text{in}\quad \X_\infty.
\end{equation*}
		
In addition, $\bs u^\delta$ also solves \eqref{ivdelta} for each $\delta\ge0$ and $(\lambda^0,u^0)$ solves  problem \eqref{weaklm} for $\delta=0$.
\end{enumerate}
\qed\end{theorem}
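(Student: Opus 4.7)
The plan is to adapt the argument of \cite[Theorem 1.3]{AzevedoSantos2017} from the scalar gradient setting to the abstract first-order operator $\text{L}$ considered here. The overall strategy proceeds in two layers: construction of $(\lambda^\delta,\bs u^\delta)$ for fixed $\delta>0$ via a $p\to\infty$ penalty approximation, followed by passage to the limit $\delta\to 0$.

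For item (1), I would introduce for each integer $k\ge 1$ the approximating problem: find $\bs u_k\in\X_{2k}$ satisfying
$$\int_\Omega\Big(\delta+\big(|\text{L}\bs u_k|/g\big)^{2k-2}\Big)\text{L}\bs u_k\cdot\text{L}\bs\varphi = \int_\Omega\bs f\cdot\bs\varphi\quad\forall\bs\varphi\in\X_{2k}.$$
Solvability follows from Browder--Minty since the operator is strongly monotone, coercive and hemicontinuous on the reflexive space $\X_{2k}$. Testing with $\bs u_k$ itself and using $g\ge\nu>0$ yields $\|\text{L}\bs u_k/g\|_{L^{2k}(\Omega)}\le C$ uniformly in $k$, and letting $k\to\infty$ forces $|\text{L}\bs u^\delta|\le g$ a.e.\ for any weak limit $\bs u^\delta$. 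Setting $\lambda_k:=\delta+(|\text{L}\bs u_k|/g)^{2k-2}$, the approximating equation shows that $\lambda_k\text{L}\bs u_k$ is bounded as a functional on $\X_\infty$, which together with $g\ge\nu$ yields a uniform $L^\infty(\Omega)'$ bound on $\lambda_k$. By Banach--Alaoglu, extract $\lambda_k\underset{k}{\lraup}\lambda^\delta$ in $L^\infty(\Omega)'$-weak$\,\ast$ and $\bs u_k\underset{k}{\lraup}\bs u^\delta$ in $\X_\infty$-weak$\,\ast$. A Minty-type argument using \eqref{monotone} with $p=2$, afforded by the $\delta$-ellipticity of the linear part, gives strong convergence $\text{L}\bs u_k\to\text{L}\bs u^\delta$ in $L^2(\Omega)^\ell$, permitting the passage to the limit in the duality pairing and yielding \eqref{weaklm1}. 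The lower bound $\lambda^\delta\ge\delta$ is preserved by weak$\,\ast$ lower semicontinuity, while the complementarity $(\lambda^\delta-\delta)(|\text{L}\bs u^\delta|-g)=0$ follows from the explicit penalty structure of $\lambda_k$, which concentrates the excess $\lambda_k-\delta$ on the set where $|\text{L}\bs u_k|/g\to 1$.

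For item (2), the $\delta$-independent bound $|\text{L}\bs u^\delta|\le g$ gives uniform boundedness of $\bs u^\delta$ in $\X_\infty$. Choosing $\bs\varphi=\bs u^\delta$ in \eqref{weaklm1} produces $\langle\lambda^\delta,|\text{L}\bs u^\delta|^2\rangle=\int_\Omega\bs f\cdot\bs u^\delta$, from which, combined with $|\text{L}\bs u^\delta|$ being of order $g\ge\nu$ on the essential support of $\lambda^\delta$, a uniform $L^\infty(\Omega)'$ bound on $\lambda^\delta$ is obtained. Weak$\,\ast$ compactness yields $\lambda^\delta\underset{\delta}{\lraup}\lambda^0$ and $\bs u^\delta\underset{\delta}{\lraup}\bs u^0$; since \eqref{weaklm1} is linear in $\bs\varphi$, the limit is direct and produces \eqref{weaklm1} with $\delta=0$. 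The constraint $|\text{L}\bs u^0|\le g$ and $\lambda^0\ge 0$ pass by weak$\,\ast$ lower semicontinuity, and the complementarity $\lambda^0(|\text{L}\bs u^0|-g)=0$ follows from its $\delta$-uniform form. Finally, to see that $\bs u^\delta$ also solves \eqref{ivdelta} for every $\delta\ge 0$, test \eqref{weaklm1} with $\bs\varphi=\bs w-\bs u^\delta$ for arbitrary $\bs w\in\K_g$ and use $\lambda^\delta\ge\delta$ together with $|\text{L}\bs w|\le g$ and the complementarity to bound $\langle(\lambda^\delta-\delta)\text{L}\bs u^\delta,\text{L}(\bs w-\bs u^\delta)\rangle\le 0$, which produces the required inequality.

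The principal obstacle I anticipate is the passage to the limit in the bilinear duality $\langle\lambda_k\text{L}\bs u_k,\text{L}\bs\varphi\rangle$, since both factors converge only in weak or weak$\,\ast$ topologies. The strong convergence $\text{L}\bs u_k\to\text{L}\bs u^\delta$ in $L^2(\Omega)^\ell$, obtained via \eqref{monotone} applied to the $\delta$-part of the operator, is the essential technical input. A secondary delicacy is the rigorous formulation of the complementarity condition in the nonreflexive space $(L^\infty(\Omega))'$: the product $(\lambda^\delta-\delta)(|\text{L}\bs u^\delta|-g)$ must be interpreted through the prescribed duality pairing of \eqref{weaklm2}, which requires showing that the excess mass of $\lambda_k-\delta$ remains supported, in the weak$\,\ast$ sense, on the contact set $\{|\text{L}\bs u_k|\to g\}$.
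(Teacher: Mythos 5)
Your overall strategy coincides with the paper's: the text offers no self-contained proof of Theorem \ref{thm_td_weak} but explicitly defers to an adaptation of \cite[Theorem 1.3]{AzevedoSantos2017} to the general operator L of \eqref{L}--\eqref{Xp}, and that argument is precisely the power-type ($p\to\infty$) penalisation you set up, followed by the uniform $L^1$ bound on the multiplier, weak-$*$ compactness in $L^\infty(\Omega)'$, and the final limit $\delta\to0$. Your a priori bounds, the uniform $(L^\infty(\Omega))'$ estimate on $\lambda^\delta-\delta$ via complementarity and $g\ge\nu$, and the derivation of \eqref{ivdelta} from \eqref{weaklm} by testing with $\bs\varphi=\bs w-\bs u^\delta$ are all sound.

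There is, however, a genuine gap at the step you yourself single out as the principal obstacle. Strong convergence $\text{L}\bs u_k\to\text{L}\bs u^\delta$ in $L^2(\Omega)^\ell$ does \emph{not} by itself permit passing to the limit in $\langle\lambda_k,\text{L}\bs u_k\cdot\text{L}\bs\varphi\rangle$, because $\lambda_k$ is only bounded in $L^1(\Omega)$: splitting $\text{L}\bs u_k\cdot\text{L}\bs\varphi=\text{L}\bs u^\delta\cdot\text{L}\bs\varphi+\text{L}(\bs u_k-\bs u^\delta)\cdot\text{L}\bs\varphi$, the remainder is controlled only by $\|\text{L}\bs\varphi\|_{L^\infty(\Omega)^\ell}\int_\Omega\lambda_k|\text{L}(\bs u_k-\bs u^\delta)|$, and the unbounded weight $\lambda_k$ cannot be absorbed by an unweighted $L^2$ estimate. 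What is actually needed --- and what the argument of \cite{AzevedoSantos2017} supplies --- is the \emph{weighted} energy estimate $\int_\Omega(|\text{L}\bs u_k|/g)^{2k-2}|\text{L}(\bs u_k-\bs u^\delta)|^2\to0$, which comes from the monotonicity \eqref{monotone} of the full penalised operator with exponent $2k$, not merely from the $\delta$-elliptic part with $p=2$. The same difficulty resurfaces in the complementarity condition: to obtain $(\lambda^\delta-\delta)(|\text{L}\bs u^\delta|-g)=0$ in $(L^\infty(\Omega))'$ one must pass to the limit in $\int_\Omega(\lambda_k-\delta)(g-|\text{L}\bs u_k|)h$ for every $h\in L^\infty(\Omega)$, again a product of the weakly-$*$ convergent $\lambda_k$ with the merely strongly-$L^2$ convergent $|\text{L}\bs u_k|$; the soft statement that the excess mass ``concentrates where $|\text{L}\bs u_k|/g\to1$'' must be replaced by the quantitative bound $\int_\Omega(\lambda_k-\delta)\big(g-|\text{L}\bs u_k|\big)^+\le C/k$, which follows from $\sup_{0\le t\le1}t^{2k-2}(1-t)\le(2k-1)^{-1}$ together with control of the set $\{|\text{L}\bs u_k|>g\}$. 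With these two weighted estimates inserted, your scheme closes and reproduces the intended proof.
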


We observe that the last condition in \eqref{weaklm} on the Lagrange multiplier $\lambda^\delta$ corresponds, in the case of integrable functions, to say that a.e. in $\Omega$
\begin{equation}\label{gmm}
\lambda^\delta\in \mathscr K_\delta(|\text{L}\bs u^\delta|-g)
\end{equation}
where, for $\delta\ge0$, $\mathscr K^\delta$ is the family of maximal monotone graphs given by $\mathscr K_\delta(s)=\delta$ if $s<0$ and $\mathscr K_\delta(s)=[\delta,\infty[$ if $s=0$. In general, further properties for $\lambda^\delta$ are unknown except in the scalar case with $\text{L}=\nabla$.

The model of the elastic-plastic torsion problem corresponds to the variational inequality with gradient constraint \eqref{ivdelta} with $\delta=1$, $p=2=d$, L$=\nabla$, $g\equiv 1$ and $\bs f=\beta$, a positive constant. In \cite{Brezis1972}, Br\'ezis proved the equivalence of this variational inequality with the Lagrange multiplier problem \eqref{weaklm} with these data and assuming $\Omega$ simply connected, showing also that $\lambda\in L^\infty(\Omega)$ is unique and even continuous in the case of $\Omega$ convex. This result was  extended to multiply connected domains by Gerhardt in \cite{Gerhardt1976}. Still for $g\equiv1$, Chiad\`o Piat and Percival extended the result for more general operators in \cite{ChiadoPiatPercivale1994}, being $f\in L^r(\Omega), r>d\ge2$, proving that $\lambda$ is a Radon measure but leaving open the uniqueness.
Keeping $g\equiv1$ but assuming $\delta=0$,  problem \eqref{weaklm} is the Monge-Kantorovich mass transfer problem (see \cite{Evans1997} for details) and the convergence $\delta\rightarrow0$ in the theorem above links this problem to the limit of Lagrange multipliers for elastic-plastic torsion problems with coercive constant $\delta>0$. In \cite{DePascaleEvansPratelli2004}, for the case $\delta=0$, assuming  $\Omega$ convex and $f\in L^q(\Omega),$  $2\le q\le\infty$ with $\int_\Omega f=0$,
Pascale, Evans and Pratelli proved the existence of $\lambda^0\in L^q(\Omega)$ solving \eqref{weaklm}. In \cite{AzevedoMirandaSantos2013}, for $\Omega$ any bounded Lipschitz domain, it was proved the existence of solution $(\lambda,u)\in L^\infty(\Omega)'\times W^{1,\infty}_0(\Omega)$ of the problem \eqref{weaklm}, with $\delta=1$,  $f\in L^2(\Omega)$, $g\in W^{2,\infty}(\Omega)$ and in \cite{AzevedoSantos2017} this result was extended for $\delta\ge0$, with $f\in L^\infty(\Omega)$ and $g$ only in  $L^\infty(\Omega)$, as it is stated in the theorem above, but for  L$=\nabla$. Besides,  when $g\in\C^2(\Omega)$ and $\Delta g^2\le0$, in \cite{AzevedoSantos2017} it is also shown that
$\lambda^\delta\in L^q(\Omega)$, for any $1\le q<\infty$ and $\delta\ge0$. 

Problem \eqref{weaklm} is also related to the equilibrium of the  table sandpiles problem (see \cite{Pri1996-1}, \cite{CC2004}, \cite{DumontIgbida2009}) and other problems in the Monge-Kantorovich theory (see  \cite{Evans1997}, \cite{Ambrosio2003}, \cite{BP2006}, \cite{Ig2009}).

In the degenerate case $\delta=0$, problem \eqref{weaklm} is also associated with the limit case $p\rightarrow\infty$ of the $p-$Laplace equation and related problems to the infinity Laplacian (see, for instance \cite{BBM91} or \cite{JPR2016} and their references), as well as in some variants of the optimal transport probem, like the obstacle Monge-Kantorovich equation (see \cite{CaffareliMcCann2010}, \cite{Figalli2010} and \cite{IgbidaNguyen2018}).

There are other problems with gradient constraint that are related with the scalar variational inequality \eqref{ivdelta}  with L$=\nabla$. To simplify, we assume $\delta=1$.

When $f$ is constant and $g\equiv1$,  it is well known that the variational inequality \eqref{ivdelta} is equivalent to the two obstacles variational inequality
\begin{equation}\label{iv2ob}
u\in \K_{\underline\varphi}^{\overline\varphi}:\quad\int_{\Omega} \nabla u\cdot \nabla( w- u)\ge\displaystyle\int_{\Omega}f\cdot( w- u)\quad\forall\,w\in\K_{\underline\varphi}^{\overline\varphi},
\end{equation}
where
\begin{equation}\label{c2ob}
\K_{\underline\varphi}^{\overline\varphi}=\big\{v\in H^1_0(\Omega):\underline\varphi\le v\le \overline\varphi\big\},
\end{equation}
with $\underline\varphi(x)=-d(x,\partial\Omega)$ and $\overline\varphi(x)=d(x,\partial\Omega)$, being $d$ the usual distance if $\Omega$ is convex and the geodesic distance otherwise. This result was proved firstly by Br\'ezis and Sibony in 1971 in \cite{BrezisSibony1971}, developed by Caffarelli and Friedman in  \cite{CaffarelliFriedman1980} in the framework of elastic-plastic problems, and it was also extended in \cite{TV2000} for certain perturbations of convex functionals.

In \cite{Evans1979}, Evans proved the equivalence between \eqref{iv2ob} with the complementary problem
\eqref{compl} below, with $g=1$ However, for non constant gradient constraint, the example below shows that the problem 
\begin{equation}
\label{compl}
\max\big\{-\Delta u-f,|\nabla u|-g\big\}=0
\end{equation}
for $f,g\in L^\infty(\Omega)$  is not always equivalent to \eqref{ivdelta}, as well as the equivalence with the double obstacle variational inequality \eqref{c2ob} defined with a general constraing $g$ is not always true. We give the definition of the obstacles for $g$ nonconstant: given $x,\ z\in\overline{\Omega}$, let
\begin{multline}\label{dg}
d_g(x,z)=\inf\Big\{\displaystyle\int_0^{\delta}g(\xi(s))ds:\ \delta>0,\ \xi:[0,\delta] \rightarrow\Omega,\
\xi\mbox{ smooth },\\ 
\xi(0)=x,\ \xi(\delta)=z,
\ |\xi'|\leq 1\Big\}.
\end{multline}
This function  is a pseudometric (see \cite{Lions1982}) and  the obstacles we consider are
\begin{equation}
\label{sup}
\overline{\varphi}(x,t)=d_g(x,\partial\Omega)=\bigvee\{w(x):w\in\K_{g}\}
\end{equation}
and
\begin{equation}
\label{inf}
\underline{\varphi}(x,t)=-d_g(x,\partial\Omega)=\bigwedge\{w(x):w\in\K_{g}\}.
\end{equation}

\begin{example}	$ $
	
\noindent Let $f,g:(-1,1)\rightarrow\R$ be defined by $f(x)=2$ and $g(x)=3x^2$. Notice that $g(0)=0$ and so $g\not\in L^\infty_\nu(-1,1)$. However the solutions of the three problems under consideration exist.
The two obstacles (with respect to this	function $g$) are
\begin{equation*}
\begin{array}{lcl}
\overline{\varphi}(x)=\begin{cases}
x^3+1&\mbox{ if }x\in[-1,0[,\\ 1-x^3&\mbox{ if }x\in[0,1],
\end{cases}
&\quad \mbox{ and }\quad &\ \
\underline{\varphi}(x,t)=\begin{cases}
-x^3-1&\mbox{ if }x\in[-1,0[,\\ x^3-1&\mbox{ if }x\in[0,1].
\end{cases}
\end{array}
\end{equation*}
The function
\begin{equation*}
u(x)=\begin{cases} 1-x^2&\mbox{ if }|x|\geq\frac 23
\mbox{ and }|x|\leq 1,\\ \overline{\varphi}(x)-\frac 4{27}&\mbox{ otherwise}
\end{cases}\end{equation*}
is $\C^1$ and solves (\ref{ivdelta}) with L$=\nabla$ and $\delta=1$.
	
The function $z(x)=1-x^2$ belongs to $\K_{\underline{\varphi}}^{\overline{\varphi}}$ and, because $z''=-2$, it solves (\ref{iv2ob}).

Neither $u$ nor $z$  solve \eqref{compl}. In fact,  $-u''(x)=-6x$ in $(-\frac23,\frac23)$, so $-u''(x)\not\le2$ a.e. and $|z'|\not\le g$. \qed
\end{example}

Sufficient conditions to assure the equivalence among these problems will be given in Section \ref{evolutiva} in the framework of evolution problems.

Nevertheless, the relations between the gradient constraint problem and the double obstacle problem are relevant to study the regularity of the solution, as in the recent works of \cite{ASW2012} and \cite{CS2016}, as well as for the regularity of the free boundary in the elastic-plastic torsion problem (see \cite{F82} or \cite{Rodrigues1987} and their references). Indeed, in this case, when $g=1$ and $f=-\tau<0$ are constants, it is well-known that the elastic and the plastic regions are, respectively, given by the subsets of $\Omega\subset\R^2$
$$\big\{|\nabla u|<1\big\}=\big\{u>\underline\varphi\big\}=\big\{\lambda>1\big\}\quad\text{ and }\quad \big\{|\nabla u|=1\big\}=\big\{u=\underline\varphi\big\}=\big\{\lambda=1\big\}.$$
The free boundary is their common boundary in $\Omega$ and, by a result of Caffarelli and Rivi\`ere \cite{CaffarelliRiviere1977}, consists locally of Jordan arcs with the same smoothness as the nearest portion of $\partial\Omega$. In particular, near reentrant corners of $\partial\Omega$, the free boundary is locally analytic. As a consequence, it was observed in \cite[p.240]{Rodrigues1987} that those portions of the free boundary are stable for perturbations of data  near the reentrant corners and near the connected components of $\partial\Omega$ of nonpositive mean curvature.

Also using the equivalence with the double obstacle problem, recently, Safdari has extended some properties on the regularity and the shape of the free boundary in the case L$=\nabla$ with the pointwise gradient constraint $(\partial_{x_1}u)^q+(\partial_{x_2}u)^q\le1$, for $q>1$ (see 
\cite{Safdari2017} and its references).

\subsection{The quasi-variational solution via compactness}

We start with an existence result for the quasi-variational inequality \eqref{iqv}, following the ideas in \cite{KunzeRodrigues2000}.
\begin{theorem}\label{iqv_existence}
Under the framework \eqref{L},  \eqref{Xp} and \eqref{norm}, let $\bs f\in L^{p'}\!\!(\Omega)^m$ and $p'=\frac{p}{p-1}$. Then there exists at least one solution of the quasi-variational inequality \eqref{iqv}, provided one of the following conditions is satisfied:
\begin{enumerate}
\item the functional $G:\X_p\rightarrow L^\infty_\nu(\Omega)$  is completely continuous;
\item the functional $G:\C(\overline\Omega)^m\rightarrow L^\infty_\nu(\Omega)$  is  continuous, when $p>d$, or it satisfies also the growth condition
\begin{equation}\label{growth}
\|G[\bs u]\|_{L^r(\Omega)}\le c_0+c_1\|\bs u\|^\alpha_{L^{\sigma p}(\Omega)^m},
\end{equation}
for some constants $c_0$, $c_1\ge0$, $\alpha\ge0$, with $r>d$ and $\sigma\ge\frac1p$, when $p=d$, or $\frac1p\le\sigma\le\frac{d}{d-p}$, when $1<p<d$.
\end{enumerate}
\end{theorem}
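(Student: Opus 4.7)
The strategy is a fixed point argument for the operator $T(\bs v):=S(G[\bs v])$, where $S:L^\infty_\nu(\Omega)\to\X_p$, $S(g)=\bs u_g$, denotes the unique-solution map of the variational inequality \eqref{iv}. By construction, a fixed point of $T$ is a solution of the quasi-variational inequality \eqref{iqv}. Continuity of $S$ in the $L^\infty$ topology of the threshold is given by Theorem~\ref{gest}. The other ingredient is a uniform a priori bound for $S$, independent of $g\in L^\infty_\nu(\Omega)$, obtained by testing \eqref{iv} with $\bs w=\bs 0$ (which lies in every $\K_g$) and invoking \eqref{poincare}:
\begin{equation*}
\|\bs u_g\|_{\X_p}^p\le\int_\Omega\bs f\cdot\bs u_g\le c_p\|\bs f\|_{L^{p'}(\Omega)^m}\|\bs u_g\|_{\X_p},
\end{equation*}
so that $\|\bs u_g\|_{\X_p}\le R_0:=\bigl(c_p\|\bs f\|_{L^{p'}}\bigr)^{1/(p-1)}$. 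This estimate will drive every compactness argument below.

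For case (1), I set $X=\X_p$ and $T=S\circ G:X\to X$. Continuity of $T$ is immediate. Compactness follows because, for any bounded sequence $\bs v_n$ in $\X_p$, complete continuity of $G$ extracts a subsequence with $G[\bs v_{n_k}]\to g$ in $L^\infty(\Omega)$, and Theorem~\ref{gest} then forces $T(\bs v_{n_k})\to S(g)$ in $\X_p$. Since $T(\X_p)$ is contained in the closed ball $\overline{B}_{R_0}\subset\X_p$, the set $K:=\overline{\mathrm{conv}}\,T(\overline{B}_{R_0})\subset\overline{B}_{R_0}$ is convex and compact (by Mazur and the compactness of $T$) and satisfies $T(K)\subset K$; Schauder's fixed point theorem then yields a solution.

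For case (2), I work in $X=\C(\overline\Omega)^m$ and apply the Leray--Schauder principle. When $p>d$, the compact embedding $\X_p\hookrightarrow\C(\overline\Omega)^m$ from \eqref{holder} makes $T:\C(\overline\Omega)^m\to\C(\overline\Omega)^m$ compact and continuous: bounded sets in $\C$ are mapped by $S\circ G$ into $\overline{B}_{R_0}\subset\X_p$, hence into a precompact subset of $\C$. When $p\le d$, the pointwise bound $|\text{L}\,T(\bs v)|\le G[\bs v]$ combined with \eqref{growth} and the Sobolev embedding $\X_p\hookrightarrow L^{\sigma p}(\Omega)^m$ (which is exactly guaranteed by $\sigma\le d/(d-p)$ for $p<d$, and arbitrary $\sigma\ge 1/p$ for $p=d$) gives, for $\bs v$ in a bounded set of $\C$,
\begin{equation*}
\|\text{L}\,T(\bs v)\|_{L^r(\Omega)^\ell}\le\|G[\bs v]\|_{L^r(\Omega)}\le c_0+c_1\|\bs v\|_{L^{\sigma p}(\Omega)^m}^\alpha
\end{equation*}
with $r>d$, so $T$ carries bounded sets in $\C$ into bounded sets of $W^{1,r}$, which embeds compactly in $\C(\overline\Omega)^m$; continuity of $T$ is verified by upgrading the $\X_p$-convergence from Theorem~\ref{gest} via the same bootstrap. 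For the Leray--Schauder a priori bound on any $\bs u=tT(\bs u)$ with $t\in[0,1]$, one applies the energy estimate above to $\bs u/t=S(G[\bs u])$ to get $\|\bs u\|_{\X_p}\le R_0$; for $p>d$ this directly bounds $\|\bs u\|_\C$, and for $p\le d$ the bootstrap upgrades it to a uniform $\C$-bound independent of $t$.

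The principal obstacle is the subcase $p\le d$: since $\C(\overline\Omega)^m$ is not continuously embedded in $\X_p$, neither the compactness of $T$ on $\C$ nor a $\C$-bound on the Leray--Schauder family can be read off from the a priori $\X_p$-estimate alone. The growth hypothesis \eqref{growth}, together with the compatibility constraints linking $\sigma$, $r$, $p$ and $d$, is precisely what closes the bootstrap $\X_p\to L^{\sigma p}\to W^{1,r}\to\C$ with $r>d$ needed to apply the fixed point theorem in $\C(\overline\Omega)^m$.
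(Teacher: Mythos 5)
Your proposal is correct and follows essentially the same route as the paper: the same a priori bound $\|\bs u_g\|_{\X_p}\le(c_p\|\bs f\|_{L^{p'}})^{1/(p-1)}$ uniform in $g$, continuity of the solution map from Theorem~\ref{gest}, Schauder's theorem in $\X_p$ for case (1), and the Leray--Schauder principle in $\C(\overline\Omega)^m$ for case (2) with the identical bootstrap $\|\bs w\|_{\C}\le C\|G[\bs w]\|_{L^r}\le C(c_0+c_1\|\bs w\|_{L^{\sigma p}}^\alpha)$ closed by the Sobolev embedding and the uniform $\X_p$-estimate. The only differences are cosmetic refinements (the Mazur hull in case (1) and a slightly more explicit treatment of compactness when $p\le d$).
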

\begin{proof}  Let $\bs u=S(\bs f,g)$ be the unique solution of the variational inequality \eqref{iv} with $g=G[\bs\varphi]$ for $\bs \varphi$ given in $\X_p$ or $\C(\overline\Omega)^m$. Since $\X_p\subset W^{1,p}(\Omega)^m$, by Sobolev embeddings, and it is always possible to take $\bs w=\bs 0$ in \eqref{iv}, we have
\begin{equation}
\label{cf}
k_s\|\bs u\|_{L^s(\Omega)^m}\le\|\bs u\|_{\X_p}\le\big(c_p\|\bs f\|_{L^{p'}\!\!(\Omega)^m}\big)^\frac1{p-1}\equiv c_{\bs f},
\end{equation}
independently of $g\in L^\infty_\nu(\Omega)$,
with $s=\frac{dp}{d-p}$ if $p<d$, for any $s<\infty$ if $p=d$, or $s=\infty$ if $p>d$, for a Sobolev constant $k_s>0$, being $c_p$ the Poincar\'e constant. By Theorem \ref{gest}, the solution map $S:L^\infty_\nu(\Omega)\ni g\mapsto\bs u\in\X_p$ is continuous.

\vspace{1mm}

\noindent{\em Case 1.} The map $T_p=S\circ G:\X_p\rightarrow\X_p$ is then also completely continuous and such that $T_p(D_{c_{\bs f}})\subset D_{c_{\bs f}}=\{\bs\varphi\in\X_p:\|\bs\varphi\|_{\X_p}\le c_{\bs f}\}$. Then, by the Schauder fixed point theorem, there exists $\bs u=T_p(\bs u)$, which solves \eqref{iv}.

\vspace{1mm}

\noindent{\em Case 2.} Set $T=S\circ G:\C(\overline\Omega)^m\rightarrow\X_p$ and $\mathscr{S} =\{\bs w\in\C(\overline\Omega)^m:\bs w=\lambda T \bs w, \lambda\in[0,1]\}$, which by \eqref{growth}  is bounded in $\C(\overline\Omega)^m$. Indeed, if $\bs w\in\mathscr S$, $\bs u=T \bs w$ solves 
\eqref{iv} with $g=G[\bs w]$ and we have, by the Sobolev's inequality,  \eqref{cf} and $\bs w=\lambda\bs u$,
\begin{align*}
\|\bs w\|_{\C(\overline\Omega)^m}&\le C\lambda\||L\bs u|\|_{L^r\!(\Omega)}\le C\|g\|_{L^r\!(\Omega)}\le C\big(c_0+c_1\|\bs w\|^\alpha_{L^{\sigma p}(\Omega)^m}\big)\\
&\le C\big(c_0+c_1k_{\sigma p}^\alpha\|\bs u\|^\alpha_{\X_p}\big)\le C\big(c_0+c_1k_{\sigma p}^\alpha c_{\bs f}^\alpha\big).
\end{align*}
Therefore $T $ is a completely continuous mapping into some closed ball of $\C(\overline\Omega)^m$ and it has a fixed point by the Leray-Schauder principle.
\qed \end{proof}

\begin{remark} The Sobolev's inequality also yields a version of Theorem \ref{iqv_existence} for $G:L^q\!(\Omega)^m\rightarrow L^\infty_\nu(\Omega)$ also merely continuous for any $q\ge1$ when $p\ge d$ and $1\le q<\frac{dq}{d-p}$ when $1<q<d$ (see \cite{KunzeRodrigues2000}).
\end{remark}

We present now examples of functionals $G$ satisfying {\em 1.} or {\em 2.} of the above theorem.

\begin{example}	$ $
	
\noindent Consider the functional $G:\X_p\rightarrow L^\infty_\nu(\Omega)$ defined as follows
$$G[\bs u](x)=F(x,\mbox{$\int$}_\Omega \bs K(x,y)\cdot\text{L}\bs u(y)dy),$$
where $F:\Omega\times\R\rightarrow\R$ is a measurable function in $x\in\Omega$ and  continuous in $ w\in\R$, satisfying $0<\nu\le F$,  and $\bs K\in\C(\overline\Omega;L^{p'}\!\!(\Omega)^\ell)$. This functional is completely continuous as a consequence of the fact that $\varphi:\X_p\rightarrow\C(\overline\Omega)$ defined by
$$w(x)=\varphi(\bs u)(x)=\int_\Omega \bs K(x,y)\cdot\text{L}\bs u(y)dy,\quad\bs u\in\X_p,\quad x\in\overline\Omega,$$
is also completely continuous. Indeed, if $\bs u_n\underset{n}{\lraup}\bs u$ in $\X_p$-weak, then $w_n\underset{n}{\longrightarrow}w$ in $\C(\overline\Omega)$, because L$\bs u_n$, being bounded in $L^p\!(\Omega)^\ell$, implies $w_n$ is uniformly bounded in $\C(\overline\Omega)$, by
$$|w_n(x)|\le\|\text{L}\bs u_n\|_{L^p\!(\Omega)^\ell}\|\bs K(x)\|_{L^{p'}\!\!(\Omega)^\ell}\le C\|\bs K\|_{\C(\overline\Omega;L^{p'}\!\!(\Omega)^\ell)}\quad\forall\,x\in\overline\Omega$$
and equicontinuous in $\overline\Omega$ by
$$|w_n(x)-w_n(z)|\le C\|\bs K(x,\cdot)-\bs K(z,\cdot)\|_{L^{p'}\!\!(\Omega)^\ell}\quad\forall\,x,z\in\overline\Omega.$$\qed
\end{example}

\begin{example}\label{4}	$ $

\noindent Let $F:\Omega\times\R^m\rightarrow\R$ be a Carath\'eodory function $F=F(x,\bs w)$, i.e., measurable in $x$ for all $\bs w\in\R^m$ and continuous in $\bs w$ for a.e. $x\in\Omega$. If, for a.e. $x\in\Omega$ and all $\bs w\in\R^m$, $F$ satisfies $0<\nu\le F(x,\bs w)$, for $p>d$ and, for $p\le d$ also
$$F(x,\bs w)\le c_0+c_1|\bs w|^\alpha,$$
for some constants $c_0, c_1\ge0$, $0\le\alpha\le\frac{p}{d-p}$ if $1<p<d$ or $\alpha\ge0$ if $p=d$, then the Nemytskii operator
$$G[\bs u](x)=F(x,\bs u(x)),\quad\text{for }\bs u\in\C(\overline\Omega)^m, \quad x\in\Omega,$$
yields a continuous functional $G:\C(\overline\Omega)^m\rightarrow L^\infty_\nu(\Omega)$, which satisfies \eqref{growth}.\qed
\end{example}

\begin{example} $ $
	
\noindent Suppose $p>d$. For fixed $g\in L^\infty_\nu(\Omega)$, defining
$$G[\bs u](x)=g(x)+\inf_{\begin{array}{c}
y\ge x\\
y\in\Omega\end{array}}|\bs u(y)|,\quad\bs u\in\C(\overline{\Omega})^m,\quad x\in\Omega,$$
where $y\ge x$ means $y_i\ge x_i$, $1\le i\le d$ (see \cite{LionsPerthame1983}),	we have an example of case {\it 2.} of Theorem \ref{iqv_existence} above.\qed
\end{example}

\subsection{The quasi-variational solution via contraction}

In the special case of ``small variations'' of the convex sets, it is possible to apply the Banach fixed point theorem, obtaining also the uniqueness of the solution to the quasi-variational inequality for $1<p\le 2$.
Here we simplify and develop the ideas of \cite{HintermullerRautenberg2012}, by starting with a sharp version of the continuous dependence result of Theorem \ref{fest} for the variational inequality \eqref{iv}.

\begin{proposition}\label{dependstat} Under the framework of Theorem \ref{fest}, let $\bs f_1,\bs f_2\in L^{p'}\!\!(\Omega)^m$, with $p'=\frac{p}{p-1}\ge 2$. Then we have
\begin{equation}\label{asymp2}
\|\bs u_1-\bs u_2\|_{\X_p}\le C_p\|\bs f_1-\bs f_2\|_{L^{p'}\!\!(\Omega)^m},\quad 1<p\le2
\end{equation}
with 
\begin{equation}\label{20}
C_2=c_2\quad\text{and}\quad C_p=(2M)^{2-p}c_p\,\tfrac{\omega_p^2}{d_p},
\end{equation}
where $c_p$ and $d_p$ are the constants, respectively, of \eqref{poincare} and \eqref{monotone},  $\omega_p=|\Omega|^{\frac{2-p}{2p}}$ and $M=\|g\|_{L^\infty(\Omega)}$.
\end{proposition}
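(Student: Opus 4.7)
The plan is to follow the same testing strategy as in the proof of Theorem \ref{fest}, but to exploit the extra integrability of $\bs f_1-\bs f_2\in L^{p'}(\Omega)^m$ and the uniform bound $|\text{L}\bs u_i|\le M$ in order to replace the $L^1$-norm on the right-hand side by an $L^{p'}$-norm and to convert the right-hand $\X_p$-norm into an $\X_2$-norm via Hölder. Using $\bs u_2$ (respectively $\bs u_1$) as test function in the variational inequality \eqref{iv} satisfied by $\bs u_1$ (respectively $\bs u_2$) and adding the two inequalities, I obtain
\begin{equation*}
\int_\Omega\bigl(\L_p\bs u_1-\L_p\bs u_2\bigr)\cdot\text{L}(\bs u_1-\bs u_2)\le\int_\Omega(\bs f_1-\bs f_2)\cdot(\bs u_1-\bs u_2),
\end{equation*}
which is the starting point for both cases $p=2$ and $1<p<2$.

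For $p=2$ the left-hand side is exactly $\|\bs u_1-\bs u_2\|_{\X_2}^2$; on the right I apply Hölder and the Poincaré inequality \eqref{poincare}, yielding $\int_\Omega(\bs f_1-\bs f_2)\cdot(\bs u_1-\bs u_2)\le c_2\|\bs f_1-\bs f_2\|_{L^2(\Omega)^m}\|\bs u_1-\bs u_2\|_{\X_2}$. Dividing gives $C_2=c_2$.

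For $1<p<2$ I use the second branch of the monotonicity estimate \eqref{monotone}. Since $|\text{L}\bs u_i|\le\|g\|_{L^\infty(\Omega)}=M$ a.e. and the exponent $p-2$ is negative, $\bigl(|\text{L}\bs u_1|+|\text{L}\bs u_2|\bigr)^{p-2}\ge(2M)^{p-2}$, so the left-hand side is bounded below by $d_p(2M)^{p-2}\|\bs u_1-\bs u_2\|_{\X_2}^2$. On the right, Hölder gives $\|\bs f_1-\bs f_2\|_{L^{p'}\!(\Omega)^m}\|\bs u_1-\bs u_2\|_{L^p(\Omega)^m}$, then \eqref{poincare} controls $\|\bs u_1-\bs u_2\|_{L^p(\Omega)^m}$ by $c_p\|\bs u_1-\bs u_2\|_{\X_p}$, and finally a second application of Hölder on $|\text{L}(\bs u_1-\bs u_2)|$ in $L^p\hookrightarrow L^2$ over the bounded set $\Omega$ produces the factor $\omega_p=|\Omega|^{\frac{2-p}{2p}}$, so that $\|\bs u_1-\bs u_2\|_{\X_p}\le\omega_p\|\bs u_1-\bs u_2\|_{\X_2}$. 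Combining, I obtain
\begin{equation*}
d_p(2M)^{p-2}\|\bs u_1-\bs u_2\|_{\X_2}^2\le c_p\,\omega_p\|\bs f_1-\bs f_2\|_{L^{p'}\!(\Omega)^m}\|\bs u_1-\bs u_2\|_{\X_2},
\end{equation*}
hence $\|\bs u_1-\bs u_2\|_{\X_2}\le(2M)^{2-p}c_p\omega_p/d_p\,\|\bs f_1-\bs f_2\|_{L^{p'}}$, and one last application of $\|\cdot\|_{\X_p}\le\omega_p\|\cdot\|_{\X_2}$ delivers the constant $C_p=(2M)^{2-p}c_p\omega_p^2/d_p$ claimed in \eqref{20}.

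There is no real obstacle here; the only delicate point is bookkeeping the direction of the inequality $(|\text{L}\bs u_1|+|\text{L}\bs u_2|)^{p-2}\ge(2M)^{p-2}$ (which reverses because $p-2<0$) and keeping track of the two Hölder steps that generate the factor $\omega_p^2$. The essential improvement over Theorem \ref{fest} is precisely the replacement of the estimate $\|\bs u_1-\bs u_2\|_{L^\infty}\le 2M'$ used there by Hölder duality in $L^p$–$L^{p'}$, which turns a square-root dependence on $\|\bs f_1-\bs f_2\|_{L^1}$ into a linear dependence on $\|\bs f_1-\bs f_2\|_{L^{p'}}$, the Lipschitz continuity that is required to run a Banach fixed-point argument in the next subsection.
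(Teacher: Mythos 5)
Your proof is correct and follows essentially the same route as the paper's: the same symmetric testing to get the monotonicity--duality inequality, the bound $(|\text{L}\bs u_1|+|\text{L}\bs u_2|)^{p-2}\ge(2M)^{p-2}$ from the constraint, Hölder in $L^{p'}$--$L^p$ plus Poincaré on the right, and the comparison $\|\cdot\|_{\X_p}\le\omega_p\|\cdot\|_{\X_2}$ producing the factor $\omega_p^2$. The only difference is cosmetic bookkeeping (you carry the $L^2$-norm of $\text{L}(\bs u_1-\bs u_2)$ to the end and convert last, whereas the paper converts to the $L^p$-norm immediately via the inverse Hölder inequality), and you arrive at exactly the constants $C_2=c_2$ and $C_p=(2M)^{2-p}c_p\omega_p^2/d_p$ of \eqref{20}.
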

\begin{proof}
Using \eqref{monotone} and H\"older's and Poincar\'e's inequalities, from \eqref{iv} for $\bs u_1$ with $\bs w=\bs u_2$ and for $\bs u_2$ with $\bs w=\bs u_1$, we easily obtain, first for $p=2$,
$$\|\text{L}(\bs u_1-\bs u_2)\|_{L^2\!(\Omega)^m}^2\le \|\bs f_1-\bs f_2\|_{L^{2}\!(\Omega)^m}\|\bs u_1-\bs u_2\|_{L^2\!(\Omega)^m}.$$
Hence \eqref{asymp2} follows immediately for $p=2$ since $d_2=1$. 

Observing that, for $1<p<2$, H\"older inequality yields $\|\text{L}\bs w\|_{L^{p}\!(\Omega)^\ell}\le \omega_p\|\text{L}\bs w\|_{L^2\!(\Omega)^\ell}$, using \eqref{monotone} and the H\"older inverse inequality, we get
\begin{multline*}
(2M)^{p-2}d_p\,|\Omega|^{\tfrac{p-2}p}\|\text{L}(\bs u_1-\bs u_2)\|_{L^p\!(\Omega)^\ell}^2\le\int_\Omega|\text{L}(\bs u_1-\bs u_2)|^2\big(|\text{L}\bs u_1|+|\text{L}\bs u_2|\big)^{p-2}\\
\le c_p\|\bs f_1-\bs f_2\|_{L^{p'}\!\!(\Omega)^m}\|\text{L}(\bs u_1-\bs u_2)\|_{L^p\!(\Omega)^\ell}.
\end{multline*}
and \eqref{asymp2} follows easily by recalling that $\|\bs w\|_{\X_p}=\|\text{L}\bs w\|_{L^p\!(\Omega)^\ell}$ for $\bs w\in\X_p$.
\qed
\end{proof}

We consider now a special case by separation of variables in the global constraint $G$. For $R>0$, denote 
$$D_R=\{\bs v\in\X_p:\|\bs v\|_{\X_p}\le R\}.$$
	
\begin{theorem}\label{unistat} Let $1<p\le2$, $\bs f\in L^{p'}\!\!(\Omega)^m$ and
\begin{equation}\label{gu}
G[\bs u](x)=\gamma(\bs u)\varphi(x),\quad x\in\Omega,
\end{equation}
where $\gamma:\X_p\rightarrow\R^+$ is a functional satisfying
\begin{enumerate}
\item[i)] $0<\eta(R)\le\gamma\le M(R)\quad \forall\,\bs u\in D_R$,
\item[ii)] $|\gamma(\bs u_1)-\gamma(\bs u_2)|\le\Gamma(R)\|\bs u_1-\bs u_2\|_{\X_p}\quad\ \forall\,\bs u_1,\bs u_2\in D_R$,
\end{enumerate}
for a sufficiently large $R\in\R^+$, being $\eta$, $M$ and $\Gamma$ monotone increasing positive functions of $R$,
and $\varphi\in L^\infty_\nu(\Omega)$ is given. Then, the quasi-variational inequality \eqref{iqv} has a unique solution, provided that
\begin{equation}\label{Banach_c}
\Gamma(R_{\bs f}) p\, C_p\,\|\bs f\|_{L^{p'}\!\!(\Omega)^m}<\eta(R_{\bs f}),
\end{equation}
where $C_2=c_2$ and $C_p=\big(2M(R_{\bs f})\|\varphi\|_{L^\infty(\Omega)}\big)^{2-p}c_p\,\tfrac{\omega_p^2}{d_p}$ are given as in \eqref{20}, with $R_{\bs f}=\big(c_p\|\bs f\|_{L^{p'}(\Omega)})^\frac1{p-1}$.
\end{theorem}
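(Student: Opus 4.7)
The plan is to apply the Banach fixed point theorem to the solution map $T:\X_p\to\X_p$, $T(\bs w)=\bs u$, where $\bs u$ is the unique solution of the variational inequality \eqref{iv} with constraint $g=\gamma(\bs w)\varphi$. Fixed points of $T$ correspond precisely to the solutions of the quasi-variational inequality \eqref{iqv}. The a priori bound \eqref{cf} from the proof of Theorem \ref{iqv_existence} gives $\|T(\bs w)\|_{\X_p}\le R_{\bs f}=(c_p\|\bs f\|_{L^{p'}(\Omega)^m})^{1/(p-1)}$ independently of $\bs w$, so $T$ maps $D_{R_{\bs f}}$ into itself and the quantitative assumptions (i) and (ii) on $\gamma$ apply throughout.

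The crux is a Lipschitz contraction estimate on $D_{R_{\bs f}}$, for which I would exploit the product structure $G[\bs w]=\gamma(\bs w)\varphi$ via a scaling reduction. Given $\bs w_1,\bs w_2\in D_{R_{\bs f}}$, set $\alpha_i=\gamma(\bs w_i)\in[\eta(R_{\bs f}),M(R_{\bs f})]$ and $\bs u_i=T(\bs w_i)$. By the $(p-1)$-homogeneity of $\L_p$, the rescaled functions $\bs v_i=\alpha_i^{-1}\bs u_i$ belong to $\K_\varphi$ and satisfy
\begin{equation*}
\int_\Omega\L_p\bs v_i\cdot\text{L}(\bs z-\bs v_i)\ge\alpha_i^{1-p}\int_\Omega\bs f\cdot(\bs z-\bs v_i)\qquad\forall\,\bs z\in\K_\varphi.
\end{equation*}
That is, both $\bs v_1$ and $\bs v_2$ solve variational inequalities over the same convex set $\K_\varphi$, with constraint bounded by $M(R_{\bs f})\|\varphi\|_{L^\infty(\Omega)}$ and distinct sources $\alpha_i^{1-p}\bs f$.

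Since $1<p\le 2$, Proposition \ref{dependstat} now applies with the \emph{Lipschitz} estimate
\begin{equation*}
\|\bs v_1-\bs v_2\|_{\X_p}\le C_p\,|\alpha_1^{1-p}-\alpha_2^{1-p}|\,\|\bs f\|_{L^{p'}(\Omega)^m},
\end{equation*}
where $C_p$ is of the form \eqref{20} with $M$ replaced by $M(R_{\bs f})\|\varphi\|_{L^\infty(\Omega)}$. Decomposing $\bs u_1-\bs u_2=\alpha_1\bs v_1-\alpha_2\bs v_2$ and using the a priori bound $|\text{L}\bs v_i|\le\varphi$ together with the elementary estimate $|\alpha_1^{1-p}-\alpha_2^{1-p}|\le(p-1)\eta(R_{\bs f})^{-p}|\alpha_1-\alpha_2|$ (mean value theorem on $\alpha\mapsto\alpha^{1-p}$, using $\alpha_i\ge\eta(R_{\bs f})$), one then combines this with the Lipschitz hypothesis $|\alpha_1-\alpha_2|\le\Gamma(R_{\bs f})\|\bs w_1-\bs w_2\|_{\X_p}$ to arrive at a bound
\begin{equation*}
\|\bs u_1-\bs u_2\|_{\X_p}\le k(R_{\bs f})\,\|\bs w_1-\bs w_2\|_{\X_p},
\end{equation*}
whose contraction constant $k(R_{\bs f})$ has precisely the form controlled by \eqref{Banach_c}. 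Under the smallness condition \eqref{Banach_c} one obtains $k(R_{\bs f})<1$, so the Banach fixed point theorem yields a unique solution of \eqref{iqv} in $D_{R_{\bs f}}$.

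The main technical obstacle is matching all the constants so that the contraction threshold takes the precise form \eqref{Banach_c}: this requires a careful choice of the decomposition of $\bs u_1-\bs u_2$ (either the asymmetric form $(\alpha_1-\alpha_2)\bs v_1+\alpha_2(\bs v_1-\bs v_2)$, or a symmetrised version), a sharp use of the lower bound $\alpha_i\ge\eta(R_{\bs f})$ inside the mean value estimate for $\alpha\mapsto\alpha^{1-p}$, and the a priori control $\|\bs v_i\|_{\X_p}\le\|\varphi\|_{L^p(\Omega)}$. The case $p=2$ serves as a sanity check, since then $\L_2=\text{L}$, $C_2=c_2$, the scaling reduces simply to $\bs f\mapsto\alpha_i^{-1}\bs f$, and the contraction estimate collapses to a transparent inequality proportional to $\Gamma(R_{\bs f})\,c_2\|\bs f\|_{L^2(\Omega)^m}/\eta(R_{\bs f})$.
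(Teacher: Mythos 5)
Your strategy is essentially the paper's: both arguments run the Banach fixed point theorem for the solution map on $D_{R_{\bs f}}$ (self-mapping via \eqref{cf}), exploit the $(p-1)$-homogeneity of $\L_p$ together with the product structure $G[\bs w]=\gamma(\bs w)\varphi$ to reduce the comparison of $\bs u_1$ and $\bs u_2$ to two variational inequalities over one and the same convex set, and then invoke Proposition~\ref{dependstat} and hypothesis \emph{ii)}. The only real difference is the direction of the normalization: you rescale both solutions down to $\K_\varphi$ and compare the sources $\alpha_i^{1-p}\bs f$, whereas the paper rescales $\bs u_1$ up, writing $\mu\bs u_1=S(\mu^{p-1}\bs f,\mu g)$ with $\mu=\gamma(\bs w_2)/\gamma(\bs w_1)$ and comparing it with $\bs u_2=S(\bs f,\mu g)$ over $\K_{\mu g}$. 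Your route does reach exactly \eqref{Banach_c}, but only if the quantity $\alpha_2\,|\alpha_1^{1-p}-\alpha_2^{1-p}|$ is kept intact: with $\mu=\alpha_2/\alpha_1>1$ it equals $\alpha_2^{2-p}(\mu^{p-1}-1)\le M(R_{\bs f})^{2-p}(p-1)|\alpha_1-\alpha_2|/\eta(R_{\bs f})$, and the factor $M(R_{\bs f})^{2-p}$ combines with the constant of Proposition~\ref{dependstat} over $\K_\varphi$ (which carries $\|\varphi\|_{L^\infty(\Omega)}^{2-p}$ rather than $\big(M(R_{\bs f})\|\varphi\|_{L^\infty(\Omega)}\big)^{2-p}$) to reproduce precisely the $C_p$ of \eqref{20}. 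The cruder mean-value bound $|\alpha_1^{1-p}-\alpha_2^{1-p}|\le(p-1)\eta(R_{\bs f})^{-p}|\alpha_1-\alpha_2|$ that you propose loses a factor $\big(M(R_{\bs f})/\eta(R_{\bs f})\big)^{p-1}\ge1$ and would yield the theorem only under a correspondingly stronger smallness condition; likewise, for the term $(\alpha_1-\alpha_2)\bs v_1$ of your decomposition you should estimate $\|\bs v_1\|_{\X_p}=\|\bs u_1\|_{\X_p}/\alpha_1\le C_p\|\bs f\|_{L^{p'}\!\!(\Omega)^m}/\eta(R_{\bs f})$ (from \eqref{asymp2} with $\bs f_2=\bs 0$) rather than $\|\varphi\|_{L^p(\Omega)}$, so that the two contributions add up to the factor $p\,C_p$ appearing in \eqref{Banach_c}. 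With these two refinements your argument is a correct, equivalent variant of the paper's proof.
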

\begin{proof}  Let
\begin{equation*}
\begin{array}{lccl}
S: & D_R & \longrightarrow & \X_p\vspace{1mm}\\
 & \bs v & \mapsto &\bs u= S(\bs f,G[\bs v])
\end{array}
\end{equation*}
where $\bs u$ is the unique solution of the variational inequality \eqref{iv} with  $g=G[\bs v]$. 

By\eqref{cf}, any solution $\bs u$ to the variational inequality \eqref{iv} is such that $\|\bs u\|_{\X_p}\le R_{\bs f}$ and therefore $S(D_{R_{\bs f}})\subset D_{R_{\bs f}}$.

Given $\bs v_i\in D_{R_{\bs f}}$, $i=1,2,$ let $\bs u_i=S(\bs f,\gamma(\bs v_i)\varphi)$ and set $\mu=\frac{\gamma(\bs v_2)}{\gamma(\bs v_1)}$. We may assume $\mu>1$ without loss of generality. Setting $g=\gamma(\bs v_1)\varphi$, then $\mu g=\gamma(\bs v_2)\varphi$ and $S(\mu^{p-1}\bs f,\mu g)=\mu S(\bs f,g)$. Using \eqref{asymp2} with $\bs f_1=\bs f$ and $\bs f_2=\mu^{p-1}\bs f$, we have
\begin{align}\label{muuu}
\nonumber\|\bs u_1-\bs u_2\|_{\X_p}&\le \|S(\bs f,g)-S(\mu^{p-1}\bs f,\mu g)\|_{\X_p}+\|S(\mu^{p-1}\bs f,\mu g)-S(\bs f,\mu g)\|_{\X_p}\\
\nonumber&\le(\mu-1)\|\bs u_1\|_{\X_p}+(\mu^{p-1}-1)C_p\|\bs f\|_{L^{p'}\!\!(\Omega)^m}\\
&\le (\mu-1)p\, C_p\|\bs f\|_{L^{p'}\!\!(\Omega)^m},
\end{align}
since $\mu^{p-1}-1\le (p-1)(\mu-1)$, because $1<p\le2$, and $\|\bs u_1\|_{\X_p}\le C_p\|\bs f\|_{L^{p'}\!\!(\Omega)^m}$ from the estimate \eqref{asymp2} with $\bs f_1=\bs f$ and $\bs f_2=\bs 0$, where $C_p$ is given by \eqref{20} with $M=M(R_{\bs f})\|\varphi\|_{L^\infty(\Omega)}$.

Observing that, from the assumptions {\em i)} and {\em ii)},
\begin{equation*}
\mu-1=\frac{\gamma(\bs v_2)-\gamma(\bs v_1)}{\gamma(\bs v_1)}\le\tfrac{\Gamma(R_{\bs f})}{\eta(R_{\bs f})}\|\bs v_1-\bs v_2\|_{\X_p},
\end{equation*}
we get from \eqref{muuu}
$$\|S(\bs v_1)-S(\bs v_2)\|_{X_p}=\|\bs u_1-\bs u_2\|_{X_p}\le \tfrac{\Gamma(R_{\bs f})}{\eta(R_{\bs f})} p\, C_p\|\bs f\|_{L^{p'}\!\!(\Omega)^m}\|\bs v_1-\bs v_2\|_{X_p}.$$

Therefore the application $S$ is a contraction provided \eqref{Banach_c} holds and its fixed point $\bs u=S(\bs f, G[\bs u])$ solves uniquely \eqref{iqv}.
\qed\end{proof}

\begin{remark} The assumptions  {\em i)} and {\em ii)} are similar to the conditons in Appendix B of \cite{HintermullerRautenberg2012}, where the contractiveness of the solution application $S$ was obtained in an implicit form under the assumptions on the norm of $\bs f$ to be sufficiently small. Our expression \eqref{Banach_c} quantifies not only the size of the $L^{p'}$-norm of $\bs f$, but also  the constants of the functional $\gamma$, the $\varphi$ and the domain $\Omega$, through its measure and the size of its Poincar\'e constant.
\end{remark}

\subsection{Applications}

We present three examples of physical  applications.

\begin{example} {\bf A nonlinear Maxwell quasi-variational inequality} (see \cite{MirandaRodriguesSantos2009})\label{6}$ $

\noindent Consider a nonlinear electromagnetic field in equilibrium in a bounded simply connected domain $\Omega$ of $\R^3$.
We consider the stationary  Maxwell's equations 
\begin{equation*}
\bs j = \nabla\times\bs h,\quad \Rot\bs e=\bs f\quad \text{and}\quad \nabla\cdot\bs h=0
\quad\text{in}\ \Omega,
\end{equation*}
where $\bs j$, $\bs e$ and $\bs h$ denote, respectively, the current density, the electric and the magnetic fields.
For \mbox{type-II} superconductors we may assume constitutive laws of power type and an
extension of the Bean critical-state model, in which the current density cannot
exceed some given critical value $j\ge\nu>0$.  
When  $j$ may vary with the absolute value $|\bs h|$ of the
magnetic field (see Prigozhin, \cite{Pri1996-2} ) we obtain a quasi-variational inequality. Here we suppose
\begin{equation*}
\bs e=\begin{cases}
\delta|\Rot\bs h|^{p-2}\Rot\bs h&\mbox{ if }|\Rot\bs h|<j(|\bs h|),\vspace{1mm} \\
\big(\delta\,j^{p-2}+\lambda\big)\Rot\bs h&\mbox{ if }|\Rot\bs h|=j(|\bs h|),
\end{cases}
\end{equation*}
where $\delta\ge0$ is a  given constant
and $\lambda\geq0$ is an unknown Lagrange multiplier associated with the inequality constraint. The region $\big\{|\nabla\times\bs h|=j(|\bs h|)\big\}$ corresponds to the superconductivity region. We obtain the quasi-variational inequality \eqref{iqv} with $\X_p$ defined in \eqref{rot_normal0} or \eqref{rot_tangencial0}, depending whether we are considering a domain with perfectly conductive or perfectly permeable walls. 

The existence of solution is immediate by Theorem \ref{iqv_existence}.  1., if we assume $j:\X_p\rightarrow\R^+$ continuous, with $j\ge\nu>0$, for any $p>3$ and, for $1<p\le3$ if $j$ also has the growth condition of $F$ in  Example \ref{4}. above. Therefore, setting L$=\nabla\times$, for any $\bs f\in L^{p'}\!\!(\Omega)^3$ and any $\delta\ge0$, we have at least a solution to
\begin{equation*}
\left\{\begin{array}{l}
\bs h\in\K_{j(|\bs h|)}=\big\{\bs w\in\X_p:|\nabla\times\bs w|\le j(|\bs h|)\text{ in }\Omega\big\},\vspace{1mm}\\
\displaystyle\delta\int_\Omega|\nabla\times\bs h|^{p-2}\nabla\times\bs h\cdot\nabla\times(\bs w-\bs h)\ge\int_{\Omega}\bs f\cdot(\bs w-\bs h)\quad\forall\bs w\in \K_{j(|\bs h|)}.
\end{array}
\right.
\end{equation*}\qed
\end{example}

\vspace{1mm}

\begin{example} {\bf Thermo-elastic equilibrium of a locking material}	$ $
	
\noindent	Analogously to perfect plasticity, in 1957 Prager introduced the notion of an ideal locking material as a linear elastic solid for stresses below a certain threshold, which cannot be overpassed. When the threshold is attained,``there is locking in the sense that any further increase in stress will not cause any changes in strain" \cite{Prager1957}. Duvaut and Lions, in 1972 \cite{DuvautLions1972}, solved the general stationary problem in the framework of convex analysis. Here we consider a simplified situation for the displacement field $\bs u=\bs u(x)$ for $x\in\Omega\subset \R^d$, $d=1,2,3$, which linearized strain tensor $D\bs u=$L$\bs u$ is its symmetrized gradient. We shall consider $\X_2=H^1_0(\Omega)^d$ with norm $\|D\bs u\|_{L^2(\Omega)^{d^2}}$ and, for an elastic solid with Lam\'e constants $\mu>0$ and $\lambda\ge0$, we consider the quasi-variational inequality
\begin{equation}\label{lame}
\begin{cases}
\bs u\in\K_{b(\vartheta[\bs u])}=\big\{\bs w\in H^1_0(\Omega)^d:|D\bs w|\le b(\vartheta[\bs u])\text{ in }\Omega\big\}\vspace{1mm}\\
\displaystyle	\int_\Omega\Big(\mu D\bs u\cdot D(\bs w-\bs u)+\lambda\big(\nabla\cdot\bs u\big)\big(\nabla\cdot(\bs w-\bs u)\big)\Big)\vspace{1mm}\\
\hspace{5cm}\displaystyle\ge \int_\Omega\bs f\cdot(\bs w-\bs u)\quad\forall\bs w\in\K_{b(\vartheta[\bs u])}.
\end{cases}
\end{equation}
Here $b\in\C(\R)$, such that $b(\vartheta)\ge\nu>0$, is a continuous function of the temperature field $\vartheta=\vartheta[\bs u](x)$, supposed also in equilibrium under a thermal forcing depending on the deformation $D\bs u$. We suppose that $\vartheta[\bs u]$ solves
\begin{equation}\label{lame2}
-\Delta\vartheta=h(x,D\bs u(x))\  \text{in }\Omega,\quad \vartheta=0\ \text{ on }\partial\Omega,
\end{equation}
where $h:\Omega\times\R^{d^2}\rightarrow\R$ is a given Carath\'eodory function such that
\begin{equation}\label{lame3}
|h(x,D)|\le h_0(x)+C|D|^s,\quad\text{ for a.e. }x\in\Omega\ \text{ and }\ D\in\R^{d^2},
\end{equation}
for some function $h_0\in L^r(\Omega)$, with $r>\frac{d}2$ and $0<s<\frac{2}{r}$.
		
First, with $\bs w\equiv\bs 0$ in \eqref{lame}, we observe that any solution to \eqref{lame} satisfies the a priori bound $\|D\bs u\|_{L^2(\Omega){d^2}}\le\frac{k}\mu\|\bs f\|_{L^2(\Omega)^{d^2}}$, where $k$ is the constant of $\|\bs u\|_{L^2(\Omega)^{d}}\le k\|D\bs u\|_{L^2(\Omega)^{d^2}}$ from Korn's inequality.
		
Therefore, for each $\bs u\in H^1_0(\Omega)^d$, the unique solution $\vartheta\in H^1_0(\Omega)$ to \eqref{lame2} is in the H\"older space $\C^\alpha(\overline\Omega)$, for some $0<\alpha<1$, since $h=h(x,D\bs u(x))\in L^\frac{p}2(\Omega)$ by \eqref{lame3}, with the respective continuous dependence in $H^1_0(\Omega)\cap\C^\alpha(\overline\Omega)$ for the strong topologies, by De Giorgi-Stamppachia estimates (see, for instance, \cite[p. 170]{Rodrigues1987} and its references). By the a priori bound of $\bs u$ and the compactness of $\C^\alpha(\overline\Omega)\subset\C(\overline\Omega)$, if we define $G:\X_2\rightarrow\C(\overline\Omega)\cap L^\infty_\nu(\Omega)$ by $G[\bs u]=b(\vartheta[\bs u])$, we easily conclude that $G$ is a completely continuous operator and we can apply Theorem \ref{iqv_existence} to conclude that, for any $\bs f\in L^2(\Omega)^d$, $b\in\C(\R)$, $b\ge\nu>0$ and any $h$ satisfying \eqref{lame3}, there exists at least one solution $(\bs u,\vartheta)\in H^1_0(\Omega)^d\times\big(H^1_0(\Omega)\cap\C^\alpha(\overline\Omega)\big)$ to the coupled problem \eqref{lame}-\eqref{lame2}.\qed
\end{example}
		
\vspace{1mm}

\begin{example}  {\bf An ionization problem in electrostatics} (a new variant of  \cite{KunzeRodrigues2000})	$ $
	
\noindent  Let $\Omega$ be a bounded Lipschitz domain of $\R^d$, $d=2$ or $3$, being $\partial\Omega=\Gamma_0\cup\Gamma_1\cup\Gamma_\#$, with $\overline{\Gamma_0}\cap\overline{\Gamma_\#}\neq\emptyset$, both sets with positive $d-1$ Lebesgue measure. Denote by $\bs e$ the electric field, which we assume to be given by a potential $\bs e=-\nabla u$. We impose a potential difference between $\Gamma_0$ and $\Gamma_\#$ and that $\Gamma_1$ is insulated. So 
\begin{equation}\label{fronteira}
u=0\ \text{ on }\ \Gamma_0,\quad \bs j\cdot\bs n=0\ \text{ on }\ \Gamma_1\quad\text{ and }\quad  u=\ u_\#\ \text{ on }\ \Gamma_\#,
\end{equation}
with $\bs n$ being the outer unit normal vector to $\partial\Omega$. Here the trace $\ u_\#$ on $\Gamma_\#$ is an unknown constant to be found as part of the solution, by giving the total current $\tau$ across $\Gamma_\#$,
\begin{equation}\label{current}
\tau=\int_{\Gamma_\#}\bs j\cdot\bs n\in\R.
\end{equation}
We set L$=\nabla$, $V_2=H^1(\Omega)$ and, as in \cite{Rodrigues1996}, we define
\begin{equation}\label{space}
\X_2=H^1_\#=\big\{w\in H^1(\Omega):w=0\text{ on }\Gamma_0\text{ and }w=w_\#=\text{constant on }\Gamma_\# \big\},
\end{equation}
where the Poincar\'e inequality \eqref{poincare} holds, as well as the trace property for $w_\#=w_{|_{\Gamma_\#}}$, for some $c_\#>0$:
$$|w_\#|\le c_\#\|\nabla w\|_{L^2(\Omega)^d}\quad \forall w\in\X_2.$$

We assume, as in \cite[p.333]{DuvautLions1972} that
\begin{equation}\label{ili1}
\bs j=\begin{cases}
\sigma\bs e&\mbox{ if }|\bs e|<\gamma,\vspace{1mm} \\
(\sigma+\lambda)\bs e&\mbox{ if }|\bs e|=\gamma,
\end{cases}
\end{equation}     
where $\sigma$ is a positive constant, $\lambda\ge0$ is a Lagrange multiplier and $\gamma$ a positive ionization threshold. However, this is only an approximation of the true ionization law. In \cite{KunzeRodrigues2000}, it was proposed to let $\gamma$ vary locally with $|\bs e|^2$ in a neighbourhood of each point of the boundary, but here we shall consider instead that the ionization threshold depends on the difference of the potential on the opposite boundaries $\Gamma_0$ and $\Gamma_\#$, i.e. 
\begin{equation}
\label{ion}
\gamma=\gamma( u_\#)\ \text{ with }\ \gamma\in\C(\R)\ \text{ and }\ \gamma\ge\nu>0.
\end{equation}

Therefore we are led to search the electric potential $ u$ as the solution of the following quasi-variational inequality:
\begin{equation}\label{ionconvex}
u\in\K_{\gamma( u_\#)}=\big\{ w\in H^1_\#(\Omega):|\nabla w|\le\gamma( u_\#)\text{ in }\Omega\big\},
\end{equation}
\begin{equation}\label{ioniqv}
\sigma\int_\Omega\nabla u\cdot\nabla(w-u)\ge\int_{\Omega}f(w-u)-\tau(w_\#-u_\#)\quad\forall w\in \K_{\gamma(u_\#)},
\end{equation}
by incorporating the ionization law \eqref{ili1} with the conservation law of the electric charge $\nabla\cdot\bs j=f$ in $\Omega$ and the boundary conditions \eqref{fronteira} and \eqref{current} (see \cite{Rodrigues1996}, for details).

From \eqref{ioniqv} with $w=0$, we also have the a priori bound
\begin{equation}\label{apest}
\|\nabla u\|_{L^2(\Omega)^d}=\|u\|_{\X_2}\le\tfrac{c_2}{\sigma}\|f\|_{L^2(\Omega)}+\tfrac{c_\#}{\sigma}\equiv R_\#.
\end{equation}

Then, setting $G[u]=\gamma(u_\#)$ for $u\in\X_2=H^1_\#$, by the continuity of the trace on $\Gamma_\#$ and the assumption \eqref{ion}, we easily conclude that $G:\X_2\rightarrow[\nu,\gamma_\#]$ is a completely continuous operator, where $\gamma_\#=\displaystyle\max_{|r|\le c_\#\,R_\#}\gamma(r)$, with $R_\#$ from \eqref{apest}. Consequently, by Theorem \ref{iqv_existence}, there exists at least a solution to the ionization problem \eqref{ionconvex}-\eqref{ioniqv}, for any $f\in L^2(\Omega)$ and any $\tau\in\R$.

From \eqref{ioniqv}, if we denote by $w_1$ and $w_2$ the solutions of the variational inequality for $(f_1,\tau_1)$ and $(f_2,\tau_2)$ corresponding to the same convex $\K_g$ defined in \eqref{ionconvex}, we easily obtain the following version of Proposition \ref{dependstat}:
$$\|w_1-w_2\|_{H^1_\#(\Omega)}\le\tfrac{c_2}{\sigma}\|f_1-f_2\|_{L^2(\Omega)}+\tfrac{c_\#}{\sigma}|\tau_1-\tau_2|.$$

If, in addition, $\gamma\in \C^{0,1}(\R)$ and we set $\gamma'_\#=\displaystyle\sup_{|r|\le c_\#\,R_\#}|\gamma'(r)|$ we have
$$|\gamma(w_{1\#})-\gamma(w_{2\#})|\le\gamma_\#'|w_{1\#}-w_{2\#}|\le\gamma_\#'\,c_\#\|w_1-w_2\|_{H^1_\#(\Omega)}$$
and the argument of Theorem \ref{unistat} yields that the solution $u$ of \eqref{ionconvex}-\eqref{ioniqv} is unique provided that
$$2\gamma_\#'\,c_\#\big(\tfrac{c_2}{\sigma}\|f\|_{L^2(\Omega)}+\tfrac{c_\#}{\sigma}|\tau|\big)<\nu.$$\qed
\end{example}

\section{Evolutionary problems}\label{evolutiva}

\subsection{The variational inequality}\label{3.1}

For $T>0$ and $t\in(0,T)$, we set $Q_t=\Omega\times(0,t)$ and, for $\nu>0$, we define
$$L^\infty_\nu(Q_T)=\{w\in L^\infty(Q_T):w\ge\nu\}.$$ 
Given $g\in L^\infty_\nu(Q_T)$, for a.e. $t\in(0,T)$ we  set
$$\bs w\in\K_g\text{ iff }\bs w(t)\in\K_{g(t)}=\big\{\bs w\in\X_p:|\text{L}\bs w|\le g(t)\big\}.$$

We define, for $1<p<\infty$ and $p'=\frac{p}{p-1}$,
$$\V_p=L^p\big(0,T;\X_p\big),\quad\V_p'=L^{p'}\big(0,T;\X_p'\big),\quad\Y_p=\big\{\bs w\in\V_p:\partial_t\bs w\in\V_p'\big\}$$
and we assume that there exists an Hilbert space $\mathbb H$  such that
\begin{equation}\label{gelfand}
\mathbb H\subseteq L^2(\Omega)^m,\quad (\X_p,\mathbb H,\X_p')\text{ is a Gelfand triple,}\quad \X_p\hookrightarrow\mathbb H\text{ is compact}.
\end{equation}

As a consequence, by the embedding results of Sobolev-Bochner spaces (see, for instance \cite{Roubicek2013}), we have then
$$\Y_p\subset\C\big([0,T];\mathbb H)\subset\mathscr H\equiv L^p\big(0,T;\mathbb H\big)$$
and the embedding of $\Y_p\subset\mathscr H$ is also compact for $1<p<\infty$.

For $\delta\ge0$, given $\bs f:Q_T\rightarrow\R$ and $\bs u_0:\Omega\rightarrow\R$, $\bs u_0\in\K_{g(0)}$, we consider the weak formulation of the variational inequality, following \cite{LionsStamppachia1967},
\begin{equation}\label{iv_weak}
\left\{\begin{array}{l}
\bs u^\delta\in\K_{g},\vspace{1mm}\\
\displaystyle\int_0^T\langle\partial_t\bs w,\bs w-\bs u^\delta\rangle_p+\delta\int_{Q_T} \L_p\bs u^\delta\cdot \text{L}(\bs w-\bs u^\delta)\ge\displaystyle\int_{Q_T}\bs f\cdot(\bs w-\bs u^\delta)\vspace{1mm}\\
\hfill{\hspace{2cm}\displaystyle-\frac12\int_\Omega|\bs w(0)-\bs u_0|^2},\quad\forall \bs w\in \K_{g}\cap\Y_p
\end{array}
\right.
\end{equation}
and we observe that the solution $\bs u^\delta\in\V_p$ is not required to have the time derivative $\partial_t\bs u^\delta$ in the dual space $\V_p'$ and satisfies the initial condition in a very weak sense. In \eqref{iv_weak}, $\langle\,\cdot\,,\,\cdot\,\rangle_p$ denotes the duality pairing between $\X_p'$ and $\X_p$, which reduces to the inner problem in $L^2(\Omega)^m$ if both functions belong to this space.

When $\partial_t\bs u^\delta\in L^2\big(0,T;L^2(\Omega)^m\big)$ (or more generally when $\bs u^\delta\in\Y_p$), the strong formulation reads
\begin{equation}\label{ive}
\left\{\begin{array}{l}
\bs u^\delta(t)\in\K_{g(t)},\  \,t\in[0,T],\ \,\bs u(0)=\bs u_0,\vspace{1mm}\\
\displaystyle\int_{\Omega}\partial_t\bs u^\delta(t)\cdot(\bs w-\bs u^\delta(t))+\delta\int_{\Omega} \L_p\bs u^\delta(t)\cdot \text{L}(\bs w-\bs u^\delta(t))\vspace{1mm}\\
\hspace{2cm}\displaystyle\ge\int_{\Omega}\bs f(t)\cdot(\bs w-\bs u^\delta(t)),
\displaystyle\quad\forall\,\bs w\in \K_{g(t)}\ \text{ for a.e. }t\in(0,T).
\end{array}
\right.
\end{equation}

Integrating \eqref{ive} in $t\in(0,T)$ with $\bs w\in\K_g\cap\Y_p\subset\C\big([0,T];L^2(\Omega)^m\big)$ and using
\begin{multline*}
\int_0^t\langle\partial_t\bs u^\delta-\partial_t\bs w,\bs w-\bs u^\delta\rangle_p
=\frac12\int_\Omega|\bs w(0)-\bs u_0|^2-\frac12\int_\Omega|\bs w(t)-\bs u^\delta(t)|^2\\
\le\frac12\int_\Omega|\bs w(0)-\bs u_0|^2
\end{multline*}
we immediately conclude that a strong solution is also a weak solution, i.e., it satisfies \eqref{iv_weak}. Reciprocally, if $\bs u^\delta\in\K_g$ with $\partial_t\bs u^\delta\in L^2(Q_T)^m$ (or if $\bs u^\delta\in\Y_p$) is a weak solution with $\bs u^\delta(0)=\bs u_0$, replacing in \eqref{iv_weak} $\bs w$ by $\bs u^\delta+\theta(\bs z-\bs u^\delta)$ for $\theta\in(0,1]$ and $\bs z\in\K_g\cap\Y_p$, and letting $\theta\rightarrow0$, we conclude that $\bs u^\delta$ also satisfies
$$\int_{Q_T}\partial_t\bs u^\delta\cdot(\bs z-\bs u^\delta)+\delta\int_{Q_T}\L_p\bs u^\delta\cdot\text{L}(\bs z-\bs u^\delta)\ge\int_{Q_T}\bs f\cdot(\bs z-\bs u^\delta)$$
 and, by approximation, when $g\in\C\big([0,T];L^\infty_\nu(\Omega)\big)$ (see \cite[Lemma 5.2]{MirandaRodriguesSantos2018}), also for all $\bs z\in \K_g$.

For any $\bs w\in\K_{g(t)}$, for fixed $t\in(0,T)$ and arbitrary $s$, $0<s<t<T-s$, we can use as test function in \eqref{ive} $\bs z\in\K_g$ such that $\bs z(\tau)=0$ if $\tau\not\in(t-s,t+s)$ and $\bs z(\tau)=\frac\nu{\nu+\eps_s}\bs w$ if $\tau\in(t-s,t+s)$, with $\displaystyle\eps_s=\sup_{t-s<\tau<t+s}\|g(t)-g(\tau)\|_{L^\infty(\Omega)}$. Hence, dividing by $2s$ and letting $s\rightarrow0$, we can conclude the equivalence between \eqref{ive} and \eqref{iv_weak}.

We have the following existence and uniqueness result whose proof, under more general assumptions for monotone operators,  can be found in \cite{MirandaRodriguesSantos2018}.

\begin{theorem}\label{teoIQV}
Suppose that  $\delta\ge0$ and \eqref{L},  \eqref{Xp}, \eqref{norm} and \eqref{gelfand} are satisfied. Assume that
\begin{equation}\label{IVweakAssumptions}
\bs f\in L^2(Q_T)^m,\quad g\in\C\big([0,T];L^\infty_\nu(\Omega)\big),\quad  \bs u_0\in\K_{g(0)}.
\end{equation}
Then, for any $\delta\ge0$,  the variational inequality \eqref{iv_weak} has a unique weak solution 
$$\bs u^\delta\in\V_p\cap\C\big([0,T];L^2(\Omega)^m\big).$$
	
If, in addition,
\begin{equation}\label{gforte}
g\in W^{1,\infty}\big(0,T;L^\infty(\Omega)\big),\quad g\ge\nu>0
\end{equation}
then the variational inequality \eqref{ive} has a unique strong solution 
$$\bs u^\delta\in\V_p\cap H^1\big(0,T;L^2(\Omega)^m\big).$$
\qed\end{theorem}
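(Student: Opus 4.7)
The plan is to prove existence by time semi-discretisation, reducing the evolutionary problem to a sequence of elliptic variational inequalities already handled in Section~2.2. I partition $[0,T]$ into $n$ equal intervals of length $h=T/n$, set $t_k=kh$, $g_k=g(t_k)$, $\bs f_k=\tfrac1h\int_{t_{k-1}}^{t_k}\bs f$ and solve inductively, starting from $\bs u_0$,
$$\bs u_k\in\K_{g_k}:\ \tfrac1h\!\int_\Omega\!(\bs u_k-\bs u_{k-1})\cdot(\bs w-\bs u_k)+\delta\!\int_\Omega\!\L_p\bs u_k\cdot\text{L}(\bs w-\bs u_k)\ge\int_\Omega\!\bs f_k\cdot(\bs w-\bs u_k),$$
which for each $k$ is a strongly monotone coercive stationary inequality on the nonempty closed convex set $\K_{g_k}$ and admits a unique solution by classical results.

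I would then derive uniform estimates using the rescaling trick of Theorem \ref{gest}: since $g\ge\nu$, with $\eps_k=\|g_k-g_{k-1}\|_{L^\infty(\Omega)}$ the element $\tfrac{\nu}{\nu+\eps_k}\bs u_{k-1}$ lies in $\K_{g_k}$ and is an admissible test in the $k$-th problem. Summing in $k$ and invoking \eqref{monotone} together with the uniform time-continuity of $g$ yields bounds on the piecewise-constant and piecewise-linear interpolants $\bar{\bs u}_h,\bs u_h$ in $L^\infty(0,T;L^2(\Omega)^m)\cap\V_p$, with $|\text{L}\bar{\bs u}_h|$ controlled by the discrete threshold $g_{\lceil t/h\rceil}$. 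A weak limit $\bs u\in\V_p$ then satisfies $\bs u\in\K_g$ by weak lower semicontinuity together with $g_h\to g$ in $\C([0,T];L^\infty(\Omega))$. Combining the compact embedding $\X_p\hookrightarrow\mathbb H$ with an Aubin--Lions-type argument produces strong convergence in $\mathscr H$; passing to the limit in the weak formulation rewritten over $Q_T$ and applying Minty's trick to identify $\L_p\bs u$ yields a weak solution of \eqref{iv_weak}. Uniqueness then comes from a standard monotonicity computation after a time-regularisation of each solution (Steklov averaging) to generate admissible test functions in $\Y_p\cap\K_g$, and the $\C([0,T];L^2(\Omega)^m)$-continuity follows from the resulting energy identity.

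The strong-solution statement under \eqref{gforte} is where the main work lies, and I expect it to be the principal obstacle. The Lipschitz hypothesis gives $\eps_k\le Lh$, so both $\tfrac{\nu}{\nu+Lh}\bs u_{k-1}\in\K_{g_k}$ and $\tfrac{\nu}{\nu+Lh}\bs u_k\in\K_{g_{k-1}}$ are admissible. Adding the $(k{-}1)$-th and $k$-th discrete inequalities, the monotonicity \eqref{monotone} absorbs the $\L_p$-contribution, while the rescaling defect is a term of order $\tfrac{Lh}{\nu+Lh}\|\text{L}\bs u_k\|_{L^p(\Omega)^\ell}$; here the hypothesis $g\ge\nu>0$ is essential to keep the denominator $\nu+Lh$ bounded below uniformly in $n$, so that no geometric blow-up occurs in the discrete time-derivative. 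Careful bookkeeping then produces the estimate $\sum_k h\bigl\|\tfrac{\bs u_k-\bs u_{k-1}}h\bigr\|_{L^2(\Omega)^m}^{\,2}\le C$, which in the limit gives $\partial_t\bs u\in L^2(0,T;L^2(\Omega)^m)$. The equivalence between weak and strong formulations discussed just before the theorem then delivers \eqref{ive}.
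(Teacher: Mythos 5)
Your route is genuinely different from the paper's: the survey does not prove Theorem~\ref{teoIQV} itself but defers to \cite{MirandaRodriguesSantos2018}, where existence is obtained by penalisation (a bounded penalty $k_\eps(|\text{L}\bs u|-g)$ inserted as a coefficient of $\L_p$, a priori estimates uniform in $\eps$, and a limit passage by pseudo-monotonicity, as sketched in the proof of Theorem~\ref{iqvfracat}). A Rothe time semi-discretisation is a legitimate alternative, and your discrete problems are indeed uniquely solvable for every $\delta\ge0$ since $\K_{g_k}$ is bounded, closed and convex in $\X_p$. Two caveats on the weak part: an Aubin--Lions argument is not available, because when $g$ is only continuous in time no bound on the discrete time derivative exists (this is the whole point of the weak formulation) --- fortunately weak convergence plus the Minty reformulation suffices; and both the return from the Minty form to \eqref{iv_weak} and the uniqueness argument require admissible test functions in $\K_g\cap\Y_p$ approximating a solution $\bs u$ that is not in $\Y_p$. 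A bare Steklov average of an element of $\K_g$ leaves $\K_g$ when $g$ depends on $t$; one needs the $\nu/(\nu+\cdot)$ correction together with the regularising lemma that \cite{MirandaRodriguesSantos2018} devotes real work to. These are gaps of detail rather than of strategy.

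The concrete gap is in the strong estimate, exactly where you locate the main work. The combination you describe --- test the $k$-th inequality with $\tfrac{\nu}{\nu+Lh}\bs u_{k-1}$, the $(k{-}1)$-th with $\tfrac{\nu}{\nu+Lh}\bs u_{k}$, add, and let monotonicity absorb the $\L_p$-contribution --- yields, with $\Delta_k=\bs u_k-\bs u_{k-1}$, a recursion of the form $\tfrac1{2h}\|\Delta_k\|_{L^2}^2\le\tfrac1{2h}\|\Delta_{k-1}\|_{L^2}^2+\int_\Omega(\bs f_k-\bs f_{k-1})\cdot\Delta_k+O(h)$. This telescopes to a bound on $\|\Delta_n\|^2/h$ in terms of $\|\Delta_1\|^2/h$ and $\sum_k\|\bs f_k-\bs f_{k-1}\|_{L^2(\Omega)^m}$; it does \emph{not} produce $\sum_k h\|\Delta_k/h\|_{L^2}^2\le C$, and it would require $\partial_t\bs f\in L^1(0,T;L^2(\Omega)^m)$ together with enough regularity of $\bs u_0$ to control $\|\bs u_1-\bs u_0\|/h$ --- none of which is granted by \eqref{IVweakAssumptions} and \eqref{gforte}. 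The estimate you actually need comes from the $k$-th inequality alone, tested with the single function $\bs w=\tfrac{\nu}{\nu+\eps_k}\bs u_{k-1}$: the time term gives $\tfrac1h\|\Delta_k\|_{L^2}^2$ up to a defect bounded by $\tfrac{L}{\nu}\|\Delta_k\|_{L^2}\|\bs u_{k-1}\|_{L^2}$ (this is where $g\ge\nu$ and $\eps_k\le Lh$ enter), while the $\delta\L_p$-term is handled not by monotonicity but by convexity, $\delta\int_\Omega\L_p\bs u_k\cdot\text{L}(\bs u_k-\bs u_{k-1})\ge\tfrac{\delta}{p}\big(\|\text{L}\bs u_k\|_{L^p(\Omega)^\ell}^p-\|\text{L}\bs u_{k-1}\|_{L^p(\Omega)^\ell}^p\big)$, which telescopes. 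Summing in $k$ and applying Young's inequality then gives $\sum_k h\|\Delta_k/h\|_{L^2}^2\le C$ with $C$ depending only on $\|\bs f\|_{L^2(Q_T)^m}$, $\|g\|_{W^{1,\infty}}$, $\nu$, $\delta\|\text{L}\bs u_0\|_{L^p(\Omega)^\ell}^p$ and $T$. With that replacement the strong part of your argument goes through.
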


\begin{remark} For the scalar case L$=\nabla$, with $p=2$, a previous result for strong solutions was obtained in \cite{Santos2002} with $g\in\C(\overline Q_T)\cap W^{1,\infty}\big(0,T;L^\infty(\Omega)\big)$, $g\ge\nu>0$ for the coercive case $\delta>0$. More recently, a similar result was obtained with the time-dependent subdifferential operator techniques by Kenmochi in \cite{Ken2013}, also for $\delta>0$ and for the scalar case L=$\nabla$, getting weak solutions for $1<p<\infty$ with $g\in\C(\overline Q_T)$ and strong solutions with $g\in\C(\overline Q_T)\cap H^1\big(0,T;\C(\overline\Omega)\big)$.
\end{remark}

The next theorem gives a quantitative result on the continuous dependence on the data, which essentially establishes the Lipschitz continuity of the solutions with respect to $\bs f$ and $\bs u_0$ and the H\"older continuity (up to $\frac12$ only) with respect to the threshold $g$. This estimate in $\V_p$ was obtained first in \cite{Santos2002} with L$=\nabla$ and $p=2$ and developed later in several other works, including \cite{MirandaRodriguesSantos2012}, \cite{Ken2013} and \cite{MirandaRodriguesSantos2018}. Here we give an explicit dependence of the constants with respect to the data.

\begin{theorem}\label{cdde} 	Suppose that  $\delta\ge0$ and \eqref{L},  \eqref{Xp}, \eqref{norm} and \eqref{gelfand} are satisfied. Let $i=1,2,$ and suppose that $\bs f_i\in L^2(\Omega)^m$, $g_i\in\C\big([0,T];L^\infty_\nu(\Omega)\big)$ and $\bs u_{0i}\in\K_{g_i(0)}$. If $\bs u^{\delta}_i$ are the solutions of the variational inequality \eqref{iv_weak}  with data $(\bs f_i,\bs u_{0i},g_i)$ then there exists a constant  $B$, which depends only in a monotone increasing way on $T$, $\|\bs u_{ 0i}\|_{L^2(\Omega)^m}^2$ and $\|\bs f_i\|_{L^2(\Omega)^m}^2$, such that
\begin{multline}\label{stabilityef}
\|\bs u^{\delta}_1-\bs u^{\delta}_2\|_{L^\infty(0,T;L^2(\Omega)^m)}^2\le(1+Te^T) \Big(\|\bs f_1-\bs f_2\|_{L^2(Q_T)^m}^2\\
+\|\bs u_{ 01}-\bs u_{ 02}\|_{L^2(\Omega)^m}^2+\tfrac{B}{\nu}\|g_1-g_2\|_{L^\infty(Q_T)}\Big).
\end{multline} \vspace{-5mm}
	
Besides, if $\delta>0$, 
\begin{multline}\label{stabilityeff}
\|\bs u^{\delta}_1-\bs u^{\delta}_2\|_{\V_p}^{p\vee2}\le \tfrac{a_p}{\delta}\Big(\|\bs f_1-\bs f_2\|_{L^2(Q_T)^m}^2\\
+\|\bs u_{ 01}-\bs u_{ 02}\|_{L^2(\Omega)^m}^2+\tfrac{B}\nu\|g_1-g_2\|_{L^\infty(Q_T)}\Big),
\end{multline}
where 
\begin{equation}\label{ap}
a_p=\tfrac{(1+T+T^2e^T)}{2\,d_p}\,\big(c_g\,\text{\footnotesize $|Q_T|^\frac1p$}\big)^{(2-p)^+}\quad 1<p<\infty,
\end{equation}
being $d_p$ given by \eqref{monotone} and $c_g=\|g_1\|_{L^\infty(Q_T)}+
\|g_2\|_{L^\infty(Q_T)}$.
\end{theorem}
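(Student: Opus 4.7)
The central device is a symmetric rescaling that manufactures admissible cross test functions. Setting $\beta=\|g_1-g_2\|_{L^\infty(Q_T)}$, $\alpha=\nu/(\nu+\beta)\in(0,1]$ and $\bs v_{ij}=\alpha\,\bs u_j^\delta$ for $\{i,j\}=\{1,2\}$, the bound $g_i\ge\nu$ yields $|\text{L}\bs v_{ij}|\le\alpha g_j\le\alpha(g_i+\beta)\le g_i$, hence $\bs v_{ij}(t)\in\K_{g_i(t)}$ for a.e. $t$, while $|\alpha-1|=\beta/(\nu+\beta)\le\beta/\nu$ quantifies the distortion. I would first establish the estimates under the extra assumption \eqref{gforte}, which via Theorem \ref{teoIQV} gives strong solutions accessible through the pointwise formulation \eqref{ive}; the general statement for $g_i\in\C([0,T];L^\infty_\nu(\Omega))$ then follows by approximating $g_i$ in $\C([0,T];L^\infty(\Omega))$ by smoother thresholds (so that $\K_{g_i^\varepsilon}$ Mosco-converge, as recalled in Section 2.2), applying the estimates to the regularized strong solutions, and passing to the limit using that the constants depend on $g_i$ only through $\|g_i\|_{L^\infty(Q_T)}$ and $\nu$.

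Inserting $\bs w=\bs v_{ij}(t)$ in \eqref{ive} written for $\bs u_i^\delta$, summing over the two choices $(i,j)$, and using the splitting $\alpha\bs u_j^\delta-\bs u_i^\delta=-(\bs u_i^\delta-\bs u_j^\delta)+(\alpha-1)\bs u_j^\delta$, the time-derivative block telescopes to $-\tfrac12\tfrac{d}{dt}\|\bs u_1^\delta-\bs u_2^\delta\|_{L^2}^2+(\alpha-1)\tfrac{d}{dt}(\bs u_1^\delta,\bs u_2^\delta)_{L^2}$, the operator block becomes $-(\L_p\bs u_1^\delta-\L_p\bs u_2^\delta)\cdot\text{L}(\bs u_1^\delta-\bs u_2^\delta)+(\alpha-1)(\L_p\bs u_1^\delta\cdot\text{L}\bs u_2^\delta+\L_p\bs u_2^\delta\cdot\text{L}\bs u_1^\delta)$ (the leading piece being pointwise $\ge 0$ by \eqref{monotone}), and the source block becomes $-(\bs f_1-\bs f_2)\cdot(\bs u_1^\delta-\bs u_2^\delta)+(\alpha-1)(\bs f_1\cdot\bs u_2^\delta+\bs f_2\cdot\bs u_1^\delta)$. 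Integrating over $Q_{t^*}$ with $t^*\le T$, all cross terms bearing the factor $(\alpha-1)$ are absolutely bounded by $|\alpha-1|\,B\le\tfrac{B}{\nu}\|g_1-g_2\|_{L^\infty(Q_T)}$, with $B$ depending monotonically on $T$, $\|\bs u_{0i}\|_{L^2}^2$ and $\|\bs f_i\|_{L^2(Q_T)^m}^2$ through the a priori bounds on $\|\bs u_i^\delta\|_{L^\infty(0,T;L^2)}$ and $\|\text{L}\bs u_i^\delta\|_{L^p(Q_T)}$ obtained by testing \eqref{iv_weak} with $\bs w=\bs 0$.

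The estimate \eqref{stabilityef} follows by applying Young's inequality to the honest difference $\iint(\bs f_1-\bs f_2)\cdot(\bs u_1^\delta-\bs u_2^\delta)$ to extract $\tfrac12\|\bs f_1-\bs f_2\|_{L^2(Q_T)^m}^2$, dropping the non-negative $\delta$-contribution, and invoking Gronwall's inequality in integral form to absorb the remaining $\int_0^{t^*}\|\bs u_1^\delta-\bs u_2^\delta\|_{L^2}^2$, which yields the factor $1+Te^T$. For \eqref{stabilityeff} with $\delta>0$, the monotonicity term $\delta\iint(\L_p\bs u_1^\delta-\L_p\bs u_2^\delta)\cdot\text{L}(\bs u_1^\delta-\bs u_2^\delta)$ is retained on the left: for $p\ge 2$ it dominates $\delta d_p\|\bs u_1^\delta-\bs u_2^\delta\|_{\V_p}^p$ directly, and for $1<p<2$ the same Hölder device as in Proposition \ref{dependstat}, combined with the pointwise bound $|\text{L}\bs u_i^\delta|\le g_i\le\|g_i\|_{L^\infty(Q_T)}$, produces the extra factor $(c_g|Q_T|^{1/p})^{2-p}$ in $a_p$.

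The only genuine difficulty is one of test-function regularity: the rescaled $\bs v_{ij}$ inherit only the modest time regularity of $\bs u_j^\delta$ and are not admissible in the weak formulation \eqref{iv_weak} as it stands. The approximation argument above circumvents this by performing the substitution against strong solutions, where $\partial_t\bs u_i^\delta\in L^2(Q_T)^m$ legitimizes the pointwise use of $\bs v_{ij}(t)$; a self-contained alternative preserving the same final constants is to replace $\bs v_{ij}$ by its Steklov time-average, carry out the calculation, and let the averaging parameter tend to zero, relying on weak lower semicontinuity of the energy norms and strong continuity of the remaining terms.
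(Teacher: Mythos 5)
Your proposal is correct and follows essentially the same route as the paper's proof: the rescaled cross test functions $\tfrac{\nu}{\nu+\beta}\bs u_j^\delta$, the decomposition into the monotone difference plus $(\alpha-1)$-weighted cross terms bounded by the a priori estimates, Gronwall for \eqref{stabilityef}, the inverse H\"older device for $1<p<2$, and the reduction of the weak case to strong solutions via approximation of the thresholds. The only (harmless) embellishment is the alternative Steklov-averaging remark, which the paper does not need since it works directly with strong solutions and then approximates.
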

\begin{proof} We prove first the result for strong solutions, approximating the function  $g_i$ in $\C\big([0,T];L^\infty_\nu(\Omega)\big)$ by a sequence $\{g_i^n\}_n$ belonging to $W^{1,\infty}\big(0,T;L^\infty(\Omega)\big)$. 
	
Given two strong solutions $\bs u_i^\delta$, $i=1,2$, setting $\beta=\|g_1-g_2\|_{L^\infty(Q_T)}$,  denoting $\overline{\bs u}=\bs u^\delta_1-\bs u^\delta_2$, $\overline{\bs u}_0=\overline{\bs u}_{01}-\overline{\bs u}_{02}$, $\overline{\bs f}=\bs f_1-\bs f_2$, $\overline g=g_1-g_2$ and $\alpha=\frac\nu{\nu+\beta}$ and using the test functions $\bs w_{i_j}=\frac{\nu\bs u^\delta_i}{\nu+\beta}\in\K_j$, for $i,j=1,2$, $i\neq j$, we obtain the inequality
\begin{equation}\label{bar}
\int_\Omega\partial_t\overline{\bs u}(t)\cdot\overline{\bs u}(t)+\delta\int_{\Omega}\big(\L_p{\bs u}^\delta_1(t)-\L_p{\bs u}^\delta_2(t)\big)\cdot\text{L}\overline{\bs u}(t)\le\int_{\Omega}\overline{\bs f}(t)\cdot\overline{\bs u}(t)+\Theta(t),
\end{equation}
where
\begin{multline}
\Theta(t)=(\alpha-1)\int_{\Omega}\big( \partial_t(\bs u^\delta_1\cdot\bs u^\delta_2)\\
+\delta\L_p\bs u^\delta_1\cdot\text{L}\bs u^\delta_2+\delta\L_p\bs u^\delta_2\cdot\text{L}\bs u^\delta_1+\bs f_1\cdot\bs u^\delta_2+\bs f_2\cdot\bs u^\delta_1\big)(t)
\end{multline}
and, because  $1-\alpha=\frac{\beta}{\beta+\nu}\le\frac1\nu\|g_1-g_2\|_{L^\infty(Q_T)}$, then for any $t\in(0,T)$
\begin{equation}\label{theta}
\int_0^t\Theta\,d\tau\le \tfrac{B}{2\nu}\|g_1-g_2\|_{L^\infty(Q_T)},
\end{equation}
where the constant $B$ depends on   $\|\bs f_i\|_{L^2(Q_T)^m}$ and $\|\bs u_{0i}\|_{L^2(\Omega)}$.
From \eqref{bar}, we have
$$\int_{\Omega}|\overline{\bs u}(t)|^2\le \int_0^t\int_{\Omega}|\overline{\bs u}|^2+\int_{\Omega}|\overline{\bs u}_0|^2+\int_{Q_T}|\overline{\bs f}|^2+2\int_0^T\Theta,$$
proving \eqref{stabilityef} by applying the integral Gronwall inequality.
		
If $\delta>0$ and $p\ge2$, using the monotonicity of $\L_p$,  then
\begin{equation*}
\int_{\Omega}|\overline{\bs u}(t)|^2+2\,\delta\,d_p\int_{Q_t}|\text{L}\overline{\bs u}|^p_{L^p(Q_t)^\ell}\le\int_{Q_t}|\overline{\bs f}|^2+\int_{Q_t}|\overline{\bs u}|^2+\int_\Omega|\overline{\bs u}_0|^2+2\int_0^t\Theta(\tau)
\end{equation*}
and,  by the estimates \eqref{stabilityef} and \eqref{theta}, by integrating in $t$ we easily obtain \eqref{stabilityeff}.
	
For $\delta>0$ and $1<p<2$ set $c_g=\|g_1\|_{ L^\infty(Q_T)}+\|g_2\|_{ L^\infty(Q_T)}$. So, using the monotonicity \eqref{monotone} of $\L_p$ and the H\"older inverse inequality,
\begin{multline*}
\int_{\Omega}|\overline{\bs u}(t)|^2+2\,\delta\, d_p\,c_g^{p-2}|Q_T|^\frac{p-2}{p}\Big(\int_{Q_T}|\text{L}\overline{\bs u}(t)|^p\Big)^\frac2p\\
\le \int_{Q_t}|\overline{\bs f}|^2+\int_{Q_t}|\overline{\bs u}|^2+\int_\Omega|\overline{\bs u}_0|^2+\tfrac{B}{\nu}\|g_1-g_2\|_{L^\infty(Q_T)}
\end{multline*}
and using the estimate \eqref{stabilityef} to control $\|\overline{\bs u}\|^2_{L^2(Q_T)^m}$ as above,
we conclude the proof for strong solutions.
	
To prove the results for weak solutions, it is enough to recall that they can be approximated by strong solutions in $\C([0,T];L^2(\Omega)^m)\cap\V_p$.
\qed\end{proof}

Using the same proof for the case $\nabla\times$ of \cite{MirandaRodriguesSantos2012}, which was a development of the scalar case with $p=2$ of \cite{Santos2002}, we can prove the asymptotic behaviour of the strong solution of the variational inequality when $t\rightarrow\infty$. Consider the stationary variational inequality \eqref{iv} with data $\bs f_\infty$ and $g_\infty$ and  denoting its solution by $\bs u_\infty$, we have the following result.
\begin{theorem} \label{asympttt}Suppose that  the assumptions \eqref{L},  \eqref{Xp},  \eqref{norm} and \eqref{gelfand} are satisfied and
$$\bs f\in L^\infty\big(0,\infty;L^2(\Omega)^m\big),\qquad g\in W^{1,\infty}\big(0,\infty;L^\infty(\Omega)\big),\ g\ge\nu>0,$$
$$\bs f_\infty\in L^2(\Omega)^m,\qquad g_\infty\in L^\infty_\nu(\Omega),$$
$$\int_{\frac{t}2}^t\xi^{p'}(\tau)d\tau\underset{t\rightarrow\infty}{\longrightarrow}0,\quad \text{ if }p>2\qquad\text{and}\qquad\int_t^{t+1}\xi^2(\tau)d\tau\underset{t\rightarrow\infty}{\longrightarrow}0\quad \text{ if }1< p\le2,$$	where
\begin{equation}
\xi(t)=\|\bs f(t)-\bs f_\infty\|_{L^2(\Omega)^m}.
\end{equation}
	
Assume, in addition, that there exist $D$ and $\gamma$ positive such that
\begin{equation}
\|g(t)-g_\infty\|_{L^\infty(\Omega)}\le\frac{D}{t^\gamma},\qquad\text{where}\ \  \gamma>\begin{cases}\frac32&\text{ if }p> 2\\
\frac12&\text{ if }1<p\le2.\end{cases}
\end{equation}
	
Then, for $\delta>0$ and $\bs u^\delta$ the solution of the variational inequality \eqref{ive}, with $t\in[0,\infty)$,
$$\|\bs u^\delta(t)-\bs u_\infty^\delta\|_{L^2(\Omega)^m}\underset{t\rightarrow\infty}{\longrightarrow}0.$$
\qed
\end{theorem}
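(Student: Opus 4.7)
The plan is to compare the evolutionary solution $\bs u^\delta(t)$ with the stationary solution $\bs u_\infty^\delta$ by a cross-testing argument and to derive a scalar differential inequality for
$\Phi(t)=\tfrac12\|\bs u^\delta(t)-\bs u_\infty^\delta\|_{L^2(\Omega)^m}^2$
whose asymptotic decay will match the hypotheses on $\xi$ and on $g-g_\infty$.

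Set $\beta(t)=\|g(t)-g_\infty\|_{L^\infty(\Omega)}$ and $\alpha(t)=\nu/(\nu+\beta(t))$. Exactly as in the proof of Theorem \ref{gest}, one checks that $\alpha(t)\bs u_\infty^\delta\in\K_{g(t)}$ and $\alpha(t)\bs u^\delta(t)\in\K_{g_\infty}$, so these are admissible test functions in the strong formulation \eqref{ive} at time $t$ and in the stationary \eqref{iv}, respectively. Adding the two resulting inequalities, decomposing $\alpha\bs v-\bs u=(\bs v-\bs u)+(\alpha-1)\bs v$ in each pair, and using $\int_\Omega\partial_t\bs u^\delta\cdot(\bs u^\delta-\bs u_\infty^\delta)=\Phi'(t)$, I would obtain
\begin{equation*}
\Phi'(t)+\delta\!\int_\Omega\!\bigl(\L_p \bs u^\delta(t)-\L_p\bs u_\infty^\delta\bigr)\cdot \text{L}\bigl(\bs u^\delta(t)-\bs u_\infty^\delta\bigr)\le\xi(t)\sqrt{2\Phi(t)}+\bigl(1-\alpha(t)\bigr)\Theta(t),
\end{equation*}
where $\Theta(t)$ collects the cross-terms $\int_\Omega\partial_t\bs u^\delta\cdot\bs u_\infty^\delta$, $\int_\Omega\L_p\bs u^\delta\cdot \text{L}\bs u_\infty^\delta$, $\int_\Omega\L_p\bs u_\infty^\delta\cdot\text{L}\bs u^\delta$, $\int_\Omega\bs f\cdot\bs u_\infty^\delta$, $\int_\Omega\bs f_\infty\cdot\bs u^\delta$. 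The pointwise bound $|\text{L}\bs u^\delta|\le g$, the assumption $\bs f\in L^\infty(0,\infty;L^2)$, and a uniform-in-time energy estimate on $\partial_t\bs u^\delta$ obtained as in the strong-solution part of Theorem \ref{teoIQV} make $\Theta$ uniformly bounded; since $1-\alpha(t)\le\beta(t)/\nu\le D/(\nu t^\gamma)$, the last term is $O(t^{-\gamma})$.

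The coercive term on the left is bounded below via the monotonicity \eqref{monotone} combined with Poincar\'e's inequality \eqref{poincare}. For $p\ge 2$, the embedding $L^p(\Omega)\hookrightarrow L^2(\Omega)$ on a bounded domain yields $\delta\,c\,\Phi(t)^{p/2}$; for $1<p<2$, the uniform pointwise bound $|\text{L}\bs u^\delta|+|\text{L}\bs u_\infty^\delta|\le 2M$, with $M=\|g\|_{L^\infty(\Omega\times(0,\infty))}$, inserted in the sub-quadratic branch of \eqref{monotone}, together with Poincar\'e on $L^2$, yields $\delta\,c'(2M)^{p-2}\,\Phi(t)$. Writing $\lambda=p/2$ when $p\ge 2$ and $\lambda=1$ when $1<p\le 2$, I arrive at a scalar inequality
\begin{equation*}
\Phi'(t)+C\,\Phi(t)^\lambda\le\xi(t)\sqrt{2\Phi(t)}+\frac{C'}{t^\gamma}.
\end{equation*}

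It remains to derive $\Phi(t)\to 0$. In the case $1<p\le 2$ (so $\lambda=1$), Young's inequality absorbs $\xi\sqrt{2\Phi}$ into $\tfrac{C}{2}\Phi+\tfrac{1}{C}\xi^2$, and the linear Gronwall representation applied to $\Phi'+\tfrac{C}{2}\Phi\le\tfrac{1}{C}\xi^2+C' t^{-\gamma}$ yields decay thanks to $\int_t^{t+1}\xi^2\to 0$ and $\gamma>\tfrac12$, which together make the convolution of the exponential kernel with the right-hand side vanish at infinity. The super-quadratic case $p>2$ (so $\lambda=p/2>1$) is the main obstacle: Gronwall is not applicable, and one has instead to compare $\Phi$ on each dyadic window $[t/2,t]$ with the explicit super-solution $\varphi(s)=\bigl(C(\lambda-1)(s-s_0)\bigr)^{-1/(\lambda-1)}$ of the homogeneous equation $Y'+CY^\lambda=0$, controlling the accumulated forcing $\int_{t/2}^t(\xi^{p'}+s^{-\gamma})\,ds$. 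The precise assumptions $\int_{t/2}^t\xi^{p'}\to 0$ and $\gamma>\tfrac32$ are exactly what is needed to close this dyadic comparison, reproducing the scheme used in the $\Rot$ setting of \cite{MirandaRodriguesSantos2012}.
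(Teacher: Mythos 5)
Your proposal is correct and follows essentially the same route the paper itself invokes: the paper omits the proof and refers to the argument of \cite{MirandaRodriguesSantos2012} (developing \cite{Santos2002}), which is exactly the scheme you reconstruct — cross-testing \eqref{ive} and \eqref{iv} with the scaled functions $\alpha(t)\bs u_\infty^\delta$ and $\alpha(t)\bs u^\delta(t)$, deriving the differential inequality for $\Phi$, and treating $1<p\le2$ by linear Gronwall and $p>2$ by comparison with the supersolution of $Y'+CY^{p/2}=0$ on dyadic windows. The one point you should make explicit is the uniform bound $\int_t^{t+1}\|\partial_t\bs u^\delta\|_{L^2(\Omega)^m}^2\le C$ on $(0,\infty)$, which is what the global hypotheses $\bs f\in L^\infty(0,\infty;L^2(\Omega)^m)$ and $g\in W^{1,\infty}(0,\infty;L^\infty(\Omega))$ are there to provide and which is needed to control the $(1-\alpha)\Theta$ term.
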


In the special case of \eqref{ive} with $\delta=0$ and $g(t)=g$ for al $t\ge T^*$, observing that we can apply a result of Br\'ezis \cite[Theorem 3.11]{Brezis1973} to extend the Theorem 3.4 of  \cite{DumontIgbida2009}), in which $g\equiv1$, and obtain the following asymptotic behaviour of the solution $\bs u(t)\in\K_g$ with $\bs u(0)=\bs u_0$ of
\begin{equation}
\label{above}
\int_{\Omega}\partial_t\bs u(t)\cdot(\bs v-\bs u(t))\ge\int_\Omega\bs f(t)\cdot(\bs v-\bs u(t))\quad\forall\bs v\in\K_{g},
\end{equation}
which corresponds, in the scalar case, to the sandpile problem with space variable slope.
	
\begin{theorem} \label{asympt22}Suppose that \eqref{L}, \eqref{Xp}, \eqref{norm} and \eqref{gelfand} are satisfied, $\bs f\in L^1_{loc}\big(0,\infty;L^2(\Omega)^m)$, $g\in L_\nu^\infty(\Omega)$, $\bs u_0\in\K_{g}$ and let $\bs u$ be the solution of the variational inequality \eqref{above}. If there exists a function $\bs f_\infty$ such that $\bs f-\bs f_\infty\in L^1\big(0,\infty;L^2(\Omega)^m\big)$ then 
$$\bs u(t)\underset{t\rightarrow\infty}{\longrightarrow}\bs u_\infty\quad\text{ in }L^2(\Omega)^m,$$
where $\bs u_\infty$ solves the variational inequality \eqref{ivdeg} with $\bs f_\infty$.\qed
\end{theorem}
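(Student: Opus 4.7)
The plan is to reformulate \eqref{above} as a Cauchy problem for a differential inclusion governed by a maximal monotone operator in the Hilbert space $\mathbb H$ and then to invoke Br\'ezis's general theorem on the asymptotic behaviour of such evolutions under an integrable perturbation of the forcing.

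First, since $\K_g$ is closed and convex in $\mathbb H$, its indicator function $I_{\K_g}$ is proper, convex and lower semi-continuous, so the subdifferential $A=\partial I_{\K_g}$ (the normal cone to $\K_g$) is maximal monotone on $\mathbb H$. The variational inequality \eqref{above} is then equivalent to the Cauchy problem
\[
\partial_t\bs u(t)+A\,\bs u(t)\ni\bs f(t),\qquad \bs u(0)=\bs u_0\in\K_g,
\]
and the stationary inequality \eqref{ivdeg} with datum $\bs f_\infty$ is equivalent to $\bs f_\infty\in A\bs u_\infty$, so that the existence result already established for \eqref{ivdeg} guarantees $A^{-1}(\bs f_\infty)\neq\emptyset$.

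Second, I would combine the evolutionary inequality tested at $\bs u(t)$ with $\bs u_\infty$ and the stationary inequality tested at $\bs u_\infty$ with $\bs u(t)$, for any $\bs u_\infty\in A^{-1}(\bs f_\infty)$, to obtain
\[
\tfrac12\tfrac{d}{dt}\|\bs u(t)-\bs u_\infty\|_{L^2(\Omega)^m}^2\le \|\bs f(t)-\bs f_\infty\|_{L^2(\Omega)^m}\,\|\bs u(t)-\bs u_\infty\|_{L^2(\Omega)^m}.
\]
Since $\bs f-\bs f_\infty\in L^1(0,\infty;L^2(\Omega)^m)$, this yields uniform boundedness of $\|\bs u(t)-\bs u_\infty\|_{L^2(\Omega)^m}$ on $[0,\infty)$ and, after division, shows that $t\mapsto\|\bs u(t)-\bs u_\infty\|_{L^2(\Omega)^m}$ is almost nonincreasing up to an integrable remainder. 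Together with the fact that $A$ is the subdifferential of a convex function, this places us exactly in the setting of Theorem 3.11 of \cite{Brezis1973}, which produces a specific $\bs u_\infty\in A^{-1}(\bs f_\infty)$, solving \eqref{ivdeg} with $\bs f_\infty$, such that $\bs u(t)\rightharpoonup\bs u_\infty$ weakly in $\mathbb H$ as $t\to\infty$.

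Finally, I would upgrade the weak convergence to strong convergence in $L^2(\Omega)^m$ by exploiting the built-in constraint $|\text{L}\bs u(t)|\le g$ a.e., which confines $\bs u(t)$ to a bounded subset of $\X_p$: by the compact embedding $\X_p\hookrightarrow\mathbb H$ from \eqref{gelfand}, every weak limit point of $\{\bs u(t)\}$ is in fact a strong limit in $\mathbb H$, and hence in $L^2(\Omega)^m$ via $\mathbb H\subseteq L^2(\Omega)^m$. I expect the main obstacle to be that $A^{-1}(\bs f_\infty)$ is generically not a singleton, so the limit cannot be pinned down by mere compactness; one must genuinely appeal to Br\'ezis's asymptotic theorem (using the quasi-contractive estimate above) to select the correct $\bs u_\infty$, which is precisely the extension, from the $g\equiv 1$ case of \cite{DumontIgbida2009} to general $g\in L^\infty_\nu(\Omega)$, alluded to immediately before the theorem.
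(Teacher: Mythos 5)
Your proposal is correct and follows essentially the same route as the paper, which establishes this statement precisely by recasting \eqref{above} as the subdifferential evolution $\partial_t\bs u+\partial I_{\K_g}(\bs u)\ni\bs f$ in $\mathbb H$ and invoking \cite[Theorem 3.11]{Brezis1973} to extend Theorem 3.4 of \cite{DumontIgbida2009} from $g\equiv1$ to general $g\in L^\infty_\nu(\Omega)$. Your final step upgrading the weak convergence given by Br\'ezis's theorem to strong convergence in $L^2(\Omega)^m$, using the uniform bound $|\text{L}\bs u(t)|\le g$ together with the compact embedding $\X_p\hookrightarrow\mathbb H$ from \eqref{gelfand}, is exactly the observation needed to justify the strong convergence asserted in the statement.
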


\subsection{Equivalent formulations when L=$\nabla$}

In this section, we summarize the main results of \cite{Santos2002}, assuming $\partial\Omega$ is of class $\C^2$, $p=2$ and $L=\nabla$ and considering the strong  variational inequality \eqref{iv} in this special case, 
\begin{equation}\label{ivgrad}
\left\{\begin{array}{l}
u(t)\in\K_{g(t)},\    u(0)=  u_0,\vspace{1mm}\\
\displaystyle\int_{\Omega}\partial_t   u(t)\cdot(   v-   u(t))+\int_{\Omega} \nabla   u(t)\cdot \nabla(   v-   u(t))\ge\int_{\Omega}   f(t)\cdot(   v-   u(t)),
\vspace{1mm}\\
\hfill{\displaystyle\forall\,   v\in \K_{g(t)}\ \text{ for a.e. }t\in(0,T),}
\end{array}
\right.
\end{equation}
where 
$$\K_{g(t)}=\big\{v\in H^1_0(\Omega):|\nabla v|\le g(t)\big\}.$$
As in the stationary case, we can consider three related problems.
The first one is the Lagrange multiplier problem
\begin{equation*}
\int_{Q_T}\partial_t u\varphi+\langle\lambda\nabla u,\nabla\varphi\rangle_{(L^\infty(Q_T)'\times L^\infty(Q_T)}= \int_{Q_T}  f\varphi,\quad\forall\varphi\in L^\infty\big(0,T;W^{1,\infty}_0(\Omega)\big),
\end{equation*}\vspace{-4mm}
\begin{equation}
\label{weaklme}
\lambda\ge 1,\quad(\lambda-1)(|\nabla u|-g)=0\quad \text{ in }\big(L^\infty(Q_T)\big)',
\end{equation}
\begin{equation*}
u(0)=u_0,\ \text{ a.e. in }\Omega\quad |\nabla u|\le g\ \text{ a.e. in }Q_T.
\end{equation*}
which is equivalent to the variational inequality \eqref{ivgrad}. This was first proved in \cite{Santos1991} in the case $g\equiv1$, where it was shown the existence of $\lambda\in L^\infty(Q_T)$ satisfying \eqref{weaklme}, in the case of a compatible and smooth nonhomogeneous boundary condition for $u$. When $u_{|_{\partial\Omega\times(0,T)}}$ is independent of $x\in\partial\Omega$ then, by Theorem 3.11 of \cite{Santos1991}, $\lambda$ is unique. In this framework, it was also shown in \cite{Santos1991} that the solution $u\in L^p\big(0,T;W^{2,p}_{loc}(\Omega)\big)\cap \C^{1+\alpha,\alpha/2}(Q_T)$ for all $1\le p<\infty$ and $0\le\alpha<1$.

Secondly, we define two obstacles as in \eqref{sup} and \eqref{inf} using the pseudometric $d_{g(t)}$ introduced in \eqref{dg},
\begin{eqnarray}\label{ov}
\overline{\varphi}(x,t)=d_{g(t)}(x,\partial\Omega)=\bigvee\{w(x):w\in\K_{g(t)}\}
\end{eqnarray}
and
\begin{eqnarray}\label{uv}
\underline{\varphi}(x,t)=d_{g(t)}(x,\partial\Omega)=\bigwedge\{w(x):w\in\K_{g(t)}\}, 
\end{eqnarray}
where the variable $\K_{\underline\varphi(t)}^{\overline\varphi(t)}$ is defined by \eqref{iv2ob} for each $t\in[0,T]$
and we consider the double obstacle variational inequality
\begin{equation}\label{iv2obe}
\left\{\begin{array}{l}
u(t)\in\K_{\underline{\varphi}(t)}^{\overline{\varphi}(t)},\    u(0)=  u_0,\vspace{1mm}\\
\displaystyle\int_{\Omega}\partial_t   u(t)\cdot(   v-   u(t))+\int_{\Omega} \nabla   u(t)\cdot \nabla(   v-   u(t))\ge\int_{\Omega}   f(t)\cdot(   v-   u(t)),
\vspace{1mm}\\
\hfill{\displaystyle\forall\,   v\in \K_{\underline{\varphi}(t)}^{\overline{\varphi}(t)}\ \text{ for a.e. }t\in(0,T),}
\end{array}
\right.
\end{equation}

The third and last problem is the following complementary problem
\begin{align}\label{peve}
\nonumber&(\partial_t u-\Delta u-f)\vee(|\nabla u|-g)= 0\quad \mbox{ in }Q_T,\\
&u(0)=u_0\quad\mbox{ in }\Omega,\quad u=0\quad\text{ on }\partial\Omega\times(0,T).
\end{align}
In \cite{Zhu1992}, Zhu studied a more general problem in unbounded domains, for  large times, with a zero condition at a fixed instant $T$, motivated by stochastic control.

These different formulations of gradient constraint problems are not always equivalent and were studied in \cite{Santos2002}, where sufficient conditions were given for the equivalence of each one with \eqref{ivgrad}. 

Assume that
\begin{align}\label{as_g_varios}
\nonumber& g\in W^{1,\infty}\big(0,T; L^\infty(\Omega)\big)\cap L^\infty\big(0,T;\C^2(\overline\Omega)\big), \quad g\ge\nu>0,\\
&|\nabla w_0|\le g(0),\quad f\in L^\infty(Q_T).
\end{align}

The first result holds with an additional assumption on the gradient constraint $g$, which is, of course, satisfied in the case  of $g\equiv$constant$>0$, by combining Theorem 3.9 of \cite{Santos2002} and Theorem 3.11 of \cite{Santos1991}.

\begin{theorem} 
Under the assumptions  \eqref{as_g_varios}, with $f\in L^\infty(0,T)$ and
\begin{equation}\label{supcalcal}
\partial_t(g^2)\ge0,\qquad -\Delta(g^2)\ge0,
\end{equation}
problem  \eqref{weaklme} has a solution 
$(\lambda,u)\in L^\infty(Q_T)\times L^\infty\big(0,T;W^{1,\infty}_0(\Omega)\cap H^2_{loc}(\Omega)\big).$	Besides, $u$ is the unique solution of \eqref{ivgrad} and if $g$ is constant then $\lambda$ is unique.\qed
\end{theorem}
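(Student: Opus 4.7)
The plan is to construct the pair $(\lambda, u)$ by a vanishing penalization method combined with a Bernstein-type estimate that uses the structural conditions \eqref{supcalcal} in an essential way. For $\eps>0$, I would introduce the quasilinear parabolic problem
\begin{equation*}
\partial_t u_\eps - \Delta u_\eps - \nabla\cdot\bigl(k_\eps(|\nabla u_\eps|^2-g^2)\nabla u_\eps\bigr)=f, \quad u_\eps(0)=u_0, \quad u_\eps|_{\partial\Omega}=0,
\end{equation*}
where $k_\eps:\R\to[0,\infty)$ is a smooth nondecreasing penalty (for instance $k_\eps(s)=\tfrac{1}{\eps}(s^+)^2$ or an exponential penalty), chosen so that $1+k_\eps$ plays the role of the approximate multiplier $\lambda_\eps$. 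By standard theory for monotone quasilinear parabolic equations, this problem has a unique solution $u_\eps\in H^1(Q_T)\cap L^\infty(0,T;H^1_0(\Omega))$ with suitable interior $H^2$ regularity, and the a priori energy estimates (test by $\partial_t u_\eps$) are independent of $\eps$.

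The core and hardest step is to obtain a uniform $L^\infty(Q_T)$ bound on $\lambda_\eps:=1+k_\eps(|\nabla u_\eps|^2-g^2)$. I would set $w_\eps=|\nabla u_\eps|^2-g^2$, compute the parabolic equation satisfied by $w_\eps$ by differentiating the PDE, multiplying by $\partial_{x_j}u_\eps$ and summing, and then apply the maximum principle. The divergence form of the penalty produces, after the Bernstein-type manipulation, terms of the sign of $\partial_t(g^2)$ and $-\Delta(g^2)$ on the right-hand side; precisely here \eqref{supcalcal} is invoked to show these unfavourable terms are nonpositive, so that $w_\eps^+\le C\eps$ on $Q_T$ with $C$ depending only on $\|f\|_{L^\infty}$, $\|g\|_{W^{1,\infty}(L^\infty)\cap L^\infty(\C^2)}$, $\nu$, and the bound on $u_0$. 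This yields $\|\lambda_\eps\|_{L^\infty(Q_T)}\le C$ and, simultaneously, $|\nabla u_\eps|\le g+o(1)$ in $L^\infty$, hence $u_\eps$ bounded in $L^\infty(0,T;W^{1,\infty}_0(\Omega))$.

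With these uniform bounds, I would extract a subsequence so that $u_\eps\rightharpoonup u$ weakly-$*$ in $L^\infty(0,T;W^{1,\infty}_0(\Omega))$ and strongly in $L^2(0,T;H^1_0(\Omega))$ by Aubin--Lions, and $\lambda_\eps\rightharpoonup\lambda$ weakly-$*$ in $L^\infty(Q_T)$. The limit inequality $|\nabla u|\le g$ follows from the bound $w_\eps^+\le C\eps$ and lower semicontinuity, and passage to the limit in the regularized equation gives \eqref{weaklme}$_1$. The complementarity $(\lambda-1)(|\nabla u|-g)=0$ is obtained in the integrated form by writing $\int_{Q_T}(\lambda_\eps-1)(|\nabla u_\eps|^2-g^2)=\int_{Q_T}k_\eps(w_\eps)w_\eps\ge 0$ and combining with the sign information $\lambda_\eps\ge 1$, $w_\eps\le o(1)$ to obtain the desired identity in the limit. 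Then multiplying \eqref{weaklme}$_1$ by $v-u$ with $v\in\K_{g(t)}$ and using $\lambda\ge 1$ together with $|\nabla v|\le g$, $|\nabla u|\le g$ recovers \eqref{ivgrad}; uniqueness of $u$ is already given by Theorem \ref{teoIQV}.

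The main obstacle is the $L^\infty$ bound on $\lambda_\eps$: without \eqref{supcalcal} the Bernstein-type maximum principle for $w_\eps$ produces uncontrollable source terms and one only gets $\lambda\in L^\infty(Q_T)'$. Finally, for the uniqueness of $\lambda$ when $g$ is a positive constant, I would invoke the argument of \cite{Santos1991} (Theorem 3.11): on the plastic set $\{|\nabla u|=g\}$ the equation $\nabla\cdot(\lambda\nabla u)=f-\partial_t u$ becomes, by taking the divergence against $\nabla u$ and using $|\nabla u|^2=g^2$ constant (so that $\nabla u\cdot D^2 u\,\nabla u=0$), a linear transport equation for $\lambda$ along the characteristics of $\nabla u$, with boundary data on the elastic--plastic interface determined by $\lambda=1$ on $\{|\nabla u|<g\}$; this Cauchy problem has a unique solution, yielding uniqueness of $\lambda$.
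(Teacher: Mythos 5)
Your penalization strategy is the right one, and it is in fact the route of the sources this survey relies on for the statement: the paper itself gives no proof here, but refers to Theorem 3.9 of \cite{Santos2002} combined with Theorem 3.11 of \cite{Santos1991}, which proceed exactly by penalizing the constraint in divergence form and proving a uniform $L^\infty$ bound on the penalty term via a Bernstein-type maximum principle that exploits $\partial_t(g^2)\ge0$ and $-\Delta(g^2)\le 0$ wait, $-\Delta(g^2)\ge0$. The limit passage, the complementarity identity, the recovery of \eqref{ivgrad} from \eqref{weaklme}, and the appeal to \cite{Santos1991} for uniqueness of $\lambda$ when $g$ is constant all match those references.

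There are, however, two genuine gaps in your central step. First, the implication ``$w_\eps^+\le C\eps$ on $Q_T$, hence $\|\lambda_\eps\|_{L^\infty(Q_T)}\le C$'' is not automatic and is actually false for one of the penalties you propose: with $k_\eps(s)=\eps^{-1}(s^+)^2$ one gets $k_\eps(C\eps)=C^2\eps\to0$, i.e. $\lambda\equiv1$ in the limit, which would turn \eqref{weaklme} into the unconstrained heat equation and is in general incompatible with $|\nabla u|\le g$. The estimate one actually proves is a bound on $\lambda_\eps=1+k_\eps(|\nabla u_\eps|^2-g^2)$ itself (by running the maximum principle on that quantity, or by calibrating the penalty, e.g.\ exponentially, so that the two bounds are equivalent), and then $w_\eps^+\le k_\eps^{-1}(C-1)\to0$ follows --- the logical order is the reverse of what you wrote. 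Second, your Bernstein argument only treats interior maxima; the maximum of $w_\eps$ (or of $\lambda_\eps$) may occur on $\partial\Omega\times(0,T)$, and closing the argument there requires a barrier/boundary gradient estimate. This is precisely where the $\C^2$ regularity of $\partial\Omega$ and the hypothesis $f\in L^\infty(0,T)$ --- i.e.\ $f$ independent of $x$, so that differentiating the equation in $x_j$ produces no source term $\partial_{x_j}f$ in the equation for $w_\eps$ --- are used; your proposal never explains why $f$ is assumed spatially constant, which is a strong and essential restriction of the theorem. Two smaller points: Aubin--Lions gives strong convergence of $u_\eps$, not of $\nabla u_\eps$ in $L^2(Q_T)$ (the latter needs a separate monotonicity/Minty step, which you do need to pass to the limit in the complementarity product), and the $H^2_{loc}(\Omega)$ regularity asserted in the statement is not addressed at all.
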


The equivalence with the double obstacle problem holds with a slightly weaker assumption on $g$.

\begin{theorem} Assuming \eqref{as_g_varios}, problem \eqref{iv2obe} has a unique solution. If $f\in L^\infty(0,T)$ and
\begin{equation}\label{supcal}
\partial_t(g^2)-\Delta(g^2)\ge0,
\end{equation}
then  problem \eqref{iv2obe} is equivalent to problem \eqref{ivgrad}.\qed
\end{theorem}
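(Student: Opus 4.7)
The plan is to prove the theorem in two stages: first establish existence and uniqueness of \eqref{iv2obe} under \eqref{as_g_varios} alone, via the classical theory of parabolic variational inequalities with two obstacles; then derive the equivalence with \eqref{ivgrad} under the sub-caloric condition \eqref{supcal} by showing that the unique double-obstacle solution automatically satisfies the gradient bound $|\nabla u|\le g$.

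For the first stage, the obstacles $\overline\varphi$ and $\underline\varphi$ defined in \eqref{ov}--\eqref{uv} via the pseudometric \eqref{dg} are Lipschitz in $x$ with $|\nabla\overline\varphi|,\,|\nabla\underline\varphi|\le g$ a.e., so both obstacles themselves belong to $\K_{g(t)}$. They are Lipschitz in $t$ by virtue of $g\in W^{1,\infty}(0,T;L^\infty(\Omega))$, strictly ordered in $\Omega$ because $g\ge\nu>0$, both vanish on $\partial\Omega$, and the datum $u_0\in\K_{g(0)}$ satisfies automatically $\underline\varphi(0)\le u_0\le\overline\varphi(0)$ since $\overline\varphi(\cdot,0)$ and $\underline\varphi(\cdot,0)$ are the pointwise supremum and infimum over $\K_{g(0)}$. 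With $f\in L^\infty(Q_T)$, classical results for parabolic double-obstacle problems (see e.g.\ \cite{BC1978}, \cite{Rodrigues1987}) then yield a unique solution $u\in L^\infty(0,T;W^{1,\infty}_0(\Omega))$ with $\partial_t u\in L^2(Q_T)$.

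For the equivalence, the key reduction is this: once we know that the double-obstacle solution $u$ satisfies $|\nabla u|\le g$, then $u\in\K_g$, and the trivial inclusion $\K_{g(t)}\subset\K_{\underline\varphi(t)}^{\overline\varphi(t)}$ (immediate from the definitions of the obstacles as sup and inf over $\K_g$) permits restricting the test functions in \eqref{iv2obe} to $\K_g$, proving that $u$ solves \eqref{ivgrad} as well. Uniqueness for \eqref{ivgrad} provided by Theorem \ref{teoIQV} then closes the argument and yields the equivalence.

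The main obstacle is the gradient bound $|\nabla u|\le g$, which I would establish by a Bernstein-type maximum principle argument on a regularization. Approximate $f,g$ and both obstacles by smooth functions, preserving the sub-caloric inequality $\partial_t g_\epsilon^2-\Delta g_\epsilon^2\ge 0$ and the bounds $|\nabla\overline\varphi_\epsilon|,|\nabla\underline\varphi_\epsilon|\le g_\epsilon$, and penalize the two obstacle constraints by a Moreau--Yosida $\beta_\epsilon$, producing smooth $u_\epsilon$ solving $\partial_t u_\epsilon-\Delta u_\epsilon+\beta_\epsilon(u_\epsilon-\overline\varphi_\epsilon)-\beta_\epsilon(\underline\varphi_\epsilon-u_\epsilon)=f_\epsilon$. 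Setting $Z_\epsilon=|\nabla u_\epsilon|^2-g_\epsilon^2$, differentiation of the equation shows that on $\{Z_\epsilon>0\}$ the parabolic operator applied to $Z_\epsilon$ is majorised by the sum of $-(\partial_t g_\epsilon^2-\Delta g_\epsilon^2)\le 0$ from \eqref{supcal}, the nonpositive term $-2|\nabla^2 u_\epsilon|^2$, and penalty contributions whose sign is correct precisely because $|\nabla\overline\varphi_\epsilon|,|\nabla\underline\varphi_\epsilon|\le g_\epsilon$. The boundary values of $Z_\epsilon$ are controlled by comparison with the obstacles on $\partial\Omega$, where $|\nabla\overline\varphi_\epsilon|=g_\epsilon$, and by the compatibility $|\nabla u_0|\le g(0)$ at $t=0$. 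The weak maximum principle then forces $Z_\epsilon^+\equiv 0$, and passage to the limit $\epsilon\to 0$ preserves $|\nabla u|\le g$ a.e. The delicate point is to handle the term $\nabla f_\epsilon\cdot\nabla u_\epsilon$ that arises when $f$ is only in $L^\infty(Q_T)$: this is addressed either by an integration by parts that removes the derivative of $f_\epsilon$, or by replacing the pointwise Bernstein inequality with a Stampacchia truncation argument applied to $Z_\epsilon^+$ combined with the $L^p$-parabolic regularity of $u_\epsilon$.
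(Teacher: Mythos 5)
The paper itself offers no proof of this theorem: it is quoted from \cite{Santos2002} (with the counterexample at the end of Section~\ref{2.3} showing the conditions are not superfluous). Your overall route --- existence and uniqueness for \eqref{iv2obe} from the classical two--obstacle theory, then equivalence by proving that the double--obstacle solution satisfies $|\nabla u|\le g$, via penalization and a parabolic maximum principle for $Z_\eps=|\nabla u_\eps|^2-g^2$ --- is precisely the strategy of the cited reference and of its ancestors \cite{BrezisSibony1971}, \cite{Santos1991}, and the structural points you identify (the penalty terms acquire the right sign on $\{Z_\eps>0\}$ exactly because $|\nabla\overline\varphi|,|\nabla\underline\varphi|\le g$; the condition \eqref{supcal} kills the contribution of $-(\partial_t g^2-\Delta g^2)$; uniqueness of \eqref{ivgrad} closes the loop) are all correct.

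Two corrections. First, you have misread the hypothesis: the equivalence is asserted for $f\in L^\infty(0,T)$, i.e.\ $f=f(t)$ is \emph{spatially constant}, so the term $\nabla f_\eps\cdot\nabla u_\eps$ that your final sentences are devoted to is identically zero and no integration by parts or Stampacchia truncation is needed. This restriction is not cosmetic: for genuinely $x$-dependent $f$ the sign of $\nabla f\cdot\nabla u_\eps$ on $\{Z_\eps>0\}$ cannot be controlled and the equivalence is not expected to hold in general (already in the stationary elastic--plastic torsion setting the equivalence with obstacle--type problems is sensitive to the spatial behaviour of $f$, cf.\ \cite{CaffarelliFriedman1980}), so neither of your proposed workarounds could succeed at that level of generality; you should simply invoke $\nabla f_\eps\equiv 0$. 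Second, the boundary estimate $|\nabla u_\eps|\le g_\eps$ on $\partial\Omega\times(0,T)$ deserves more than one sentence: the penalized solution does not lie exactly between the obstacles, so the squeezing of the normal derivative between $\partial_n\overline\varphi$ and $\partial_n\underline\varphi$ must be carried out with barriers adapted to $u_\eps$ itself (or with obstacles of the form $\pm d_{g+\eps}$ so that a uniform margin survives the penalization); likewise your smoothing of $\overline\varphi,\underline\varphi$ must preserve the pointwise bound $|\nabla\overline\varphi_\eps|\le g_\eps$, which ordinary mollification does not do. With these two points repaired the argument is complete and coincides in substance with the proof in \cite{Santos2002}.
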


Finally, the sufficient conditions for the equivalence of the complementary problem \eqref{peve} and the gradient constraint scalar problem \eqref{ivgrad} require stronger assumptions on the data.

\begin{theorem} Suppose that $f\in W^{1,\infty}\big(0,T;L^\infty(\Omega)\big)$, $w_0\in H^1_0(\Omega)$,  and  
\begin{align*}&\Delta u_0\in L^\infty(\Omega),\ -\Delta u_0\le f\text{ a.e. in }Q_T,\\
& g\in W^{1,\infty}\big(0,T;L^\infty(\Omega)\big)\ g\ge\nu>0\text{ and }\ \partial_t(g^2)\le0.
 \end{align*}
Then problem \eqref{peve} has a unique solution. If, in addition, $g=g(x)$  and $\Delta g^2\le0$  then this problem is equivalent to problem \eqref{ivgrad}.\qed
\end{theorem}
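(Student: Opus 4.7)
The plan is three-fold: establish uniqueness of solutions of \eqref{peve}, construct a solution via a bounded penalty approximation, and prove the equivalence with \eqref{ivgrad} under the additional hypotheses.

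\textbf{Uniqueness.} Given two solutions $u_1, u_2$ of \eqref{peve}, set $w = u_1 - u_2$ and $z_i := \partial_t u_i - \Delta u_i - f$, so that $z_i \le 0$ and $z_i (|\nabla u_i|-g) = 0$ a.e. A comparison-principle argument: at a point where $w$ achieves a positive interior maximum, $\nabla u_1 = \nabla u_2$, $\Delta w \le 0$, $\partial_t w \ge 0$, hence $z_1 \ge z_2$; combined with the structural relation (if $z_2 < 0$ then $|\nabla u_2| = g$, forcing $|\nabla u_1| = g$ as well) one derives a pointwise contradiction. A robust version tests the difference against a Lipschitz approximation $\sigma_\delta(w^+)$ of the positive-part sign, splits $\Omega$ according to which constraint is active for each $u_i$, and uses the dissipativity of $-\Delta$ on the elastic--elastic region, the convexity of $\K_{g(t)}$ on the plastic--plastic region, and the sign $z_i \le 0$ on the mixed regions; after $\delta \to 0$ Gronwall's inequality gives $w^+ \equiv 0$, and by symmetry $u_1 \equiv u_2$.

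\textbf{Existence.} I would consider the bounded penalty approximation
\[
\partial_t u_\eps - \Delta u_\eps + \tfrac{1}{\eps}(|\nabla u_\eps|^2 - g^2)_+ = f \quad \text{in } Q_T,
\]
with $u_\eps = 0$ on $\partial\Omega\times(0,T)$ and $u_\eps(0) = u_0$, whose unique strong solution is provided by classical semilinear parabolic theory. By construction $z_\eps := \partial_t u_\eps - \Delta u_\eps - f = -\tfrac{1}{\eps}(|\nabla u_\eps|^2 - g^2)_+ \le 0$ a.e. Uniform estimates on $\partial_t u_\eps$, $\Delta u_\eps$, $\nabla u_\eps$ and on the penalty term itself are derived by testing with $\partial_t u_\eps$ and by differentiating the equation in time, using essentially the hypotheses $-\Delta u_0 \le f$ (ensuring $z_\eps(\cdot,0) \le 0$ uniformly) and $\partial_t g^2 \le 0$ (preventing positive contributions in the equation for $\partial_t z_\eps$). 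Passing $\eps \to 0$, $u_\eps \rightharpoonup u$ in appropriate spaces with $|\nabla u|\le g$ a.e., and the penalty converges to a nonnegative Radon measure $\mu = -z \ge 0$ supported on $\{|\nabla u|=g\}$, yielding the complementarity.

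\textbf{Equivalence and main obstacle.} Under the additional hypotheses $g = g(x)$ and $\Delta g^2 \le 0$, one has $\partial_t g^2 - \Delta g^2 = -\Delta g^2 \ge 0$, which is condition \eqref{supcal} of the preceding theorem; hence \eqref{ivgrad} is equivalent to \eqref{iv2obe}. To deduce equivalence with \eqref{peve}, I would show that any solution of \eqref{peve} also solves \eqref{ivgrad}: since $|\nabla u| \le g$ we have $u(t) \in \K_{g(t)}$, and multiplying $\partial_t u - \Delta u - f = -\mu$ by $v - u$ for $v \in \K_{g(t)}$, integrating by parts, and using $\mu \ge 0$ together with the convex-analytic inequality $\nabla u \cdot \nabla(v-u) \le 0$ on $\{|\nabla u|=g\}$ (in the averaged sense) yields \eqref{ivgrad}; by uniqueness the two solutions coincide. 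The main obstacle is the passage to the limit in the penalty: one must obtain sharp enough uniform estimates to guarantee that $\tfrac{1}{\eps}(|\nabla u_\eps|^2 - g^2)_+$ converges to a bona fide nonnegative measure $\mu$ and that the limit profile satisfies $|\nabla u|\le g$ exactly. The two monotonicity hypotheses $-\Delta u_0 \le f$ and $\partial_t g^2 \le 0$ are precisely what make this passage work, by propagating $z_\eps \le 0$ uniformly in $\eps$ and preventing concentration of the penalty on evolving regions of the plastic set.
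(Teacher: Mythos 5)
The paper states this theorem without proof (it is a quotation of a result of \cite{Santos2002}), so your proposal can only be measured against what such a proof must actually contain, and two of your three steps have genuine gaps. For the uniqueness: write $z_i=\partial_t u_i-\Delta u_i-f\le0$ and $w=u_1-u_2$, so that $\partial_t w-\Delta w=z_1-z_2$. Testing with $w^+$ (your ``robust version'') produces the term $\int z_1w^+-\int z_2w^+$; the first integral is $\le0$, but on the region $\{w>0\}\cap\{z_2<0\}$ the second contributes $-\int z_2w^+\ge0$, which has the wrong sign and is controlled by nothing in your argument. The implication ``$z_2<0\Rightarrow|\nabla u_2|=g$'' constrains the gradient of $u_2$, not the ordering of $u_1$ and $u_2$, so the sign of $z_i$ plus the support condition cannot close the estimate: one must relate the set where the reaction acts to the set where the solution touches the extremal element $\overline\varphi$ of $\K_{g(t)}$ as in \eqref{sup}, and this is exactly where the structural hypotheses $-\Delta u_0\le f$ and $\partial_t(g^2)\le0$ have to enter. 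Your pointwise variant has the same defect: at an interior parabolic maximum of $w$ you only obtain $z_1\ge z_2$, which is compatible with $z_1=z_2=0$ and gives no contradiction.

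The equivalence step is also not correct as written. If $u$ solves \eqref{peve} with $\partial_t u-\Delta u-f=-\mu$, $\mu\ge0$, then after integrating $\int_\Omega\nabla u\cdot\nabla(v-u)$ by parts the inequality \eqref{ivgrad} is precisely the statement $\int_\Omega\mu\,(v-u)\le0$ for all $v\in\K_{g(t)}$, i.e.\ $u=\overline\varphi$ on $\supp\mu$. The pointwise inequality $\nabla u\cdot\nabla(v-u)\le0$ on $\{|\nabla u|=g\}$ that you invoke is the right tool for the Lagrange-multiplier problem \eqref{weaklme}, where the reaction is in divergence form; it says nothing about the sign of $v-u$ on $\supp\mu$ and hence nothing about $\int\mu(v-u)$ when $\mu$ is a scalar zeroth-order reaction. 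The hypotheses $g=g(x)$ and $\Delta g^2\le0$ must be used precisely to identify $\supp\mu\subset\{u=\overline\varphi\}$, via the equivalence with the obstacle problem \eqref{iv2obe} and the properties of the pseudodistance $d_g$, and your sketch never does this. By contrast, your existence step --- penalisation of the constraint and a uniform bound on the penalty term, propagated from $-\Delta u_0\le f$ at $t=0$ by a maximum principle in which $\partial_t(g^2)\le0$ removes the bad source term --- is the standard and correct strategy, essentially that of \cite{Santos1991} and \cite{Santos2002}, although the decisive uniform estimate on $\tfrac1\eps(|\nabla u_\eps|^2-g^2)_+$ is asserted rather than derived.
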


The counterexample given at the end Section \ref{2.3}, concerning the non-equivalence among these problems, can be generalized easily for the evolutionary case, as we have stabilization in time to the stationary solution (see \cite{Santos2002}).

\subsection{The scalar quasi-variational inequality with gradient constraint}\label{ivgde}

In \cite{RodriguesSantos2012}, Rodrigues and Santos proved existence of solution for a quasi-variational inequality with gradient constraint for first order quasilinear equations ($\delta=0$), extending the previous results for parabolic equations of \cite{RodriguesSantos2000}.

For $\bs\Phi=\bs\Phi(x,t,u):\overline Q_T\times\R\rightarrow\R^d$, $F=F(x,t,u):\overline Q_T\times\R\rightarrow\R$ assume that
\begin{equation}\label{iqv0_ass}
\bs\Phi\in W^{2,\infty}\big(Q_T\times(-R,R)\big)^d,\qquad F\in W^{1,\infty}\big(Q_T\times(-R,R)\big).
\end{equation}
In addition,
$\nabla\cdot\bs\Phi$ and $F$ satisfy the growth condition in the variable $u$
\begin{equation}\label{growth_ass}
|\big(\nabla\cdot\bs\Phi\big)(x,t,u)+F(x,t,u)|\le c_1|u|+c_2,
\end{equation}
uniformly in $(x,t)$, for all $u\in\R$ and a.e. $(x,t)$, being $c_1$ and $c_2$ positive constants. The gradient constraint $G=G(x,u):\Omega\times\R\rightarrow\R$ is bounded in $x$ and continuous in $ u$ and the initial condition $u_0:\Omega\rightarrow\R$  are such that
\begin{equation}\label{gxu}
G\in\C\big(\R;L^\infty_\nu(\Omega)\big),\qquad u_0\in\K_{G(u_0)}\cap\C(\overline\Omega),\qquad\delta\Delta_p u_0\in M(\Omega),
\end{equation}
being
$$\K_{G(u(t))}=\big\{w\in H^1_0(\Omega):|\nabla w|\le G(u(t))\big\}$$
and $M(\Omega)$ denotes the space of bounded measures in $\Omega$.
\begin{theorem}\label{arma} Assuming \eqref{iqv0_ass}, \eqref{growth_ass} and \eqref{gxu}, for each $\delta\ge0$ and any $1<p<\infty$,
	the  quasi-variational inequality
\begin{equation}
\label{iqvpisaarma}
\begin{cases}
u (t)\in\K_{G(u (t))} \text{  for a.e. }t\in(0,T), u(0)=u_0,\vspace{0.5mm}\\
\displaystyle\langle\partial_tu (t),w-u \rangle_{M(\Omega)\times\C(\overline \Omega)}+\int_\Omega\big(\delta\nabla_pu(t)+\bs\Phi(u (t))\big)\cdot\nabla(w(t)-u(t))\vspace{0.5mm}\\
\displaystyle\qquad\hfill{\ge\int_\Omega
F(u(t))(w-u (t))\qquad\forall w\in\K_{G(u (t))}, \text{ for a.e. }t\in(0,T)},
\end{cases}
\end{equation}
has a solution $u \in L^\infty\big(0,T;W^{1,\infty}_0(\Omega)\big)\cap\C(\overline Q_T)$ such that $\partial_tu \in L^\infty\big(0,T;M(\Omega)\big)$.\qed
\end{theorem}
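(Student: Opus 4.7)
The plan is to combine a Schauder fixed-point argument with the variational inequality theory of Theorem \ref{teoIQV}, treating the three nonlocal dependences on $u$ (through $G$, $\bs\Phi$ and $F$) by a single freezing scheme. For $\delta>0$ and $v$ in a suitable ball $D_R\subset\C(\overline Q_T)$, I would first consider the linearised variational inequality: find $u(t)\in\K_{G(v(t))}$ with $u(0)=u_0$ satisfying
\begin{equation*}
\langle\partial_t u,w-u\rangle+\int_\Omega\bigl(\delta\nabla_p u+\bs\Phi(v)\bigr)\cdot\nabla(w-u)\ge\int_\Omega F(v)(w-u)
\end{equation*}
for every $w\in\K_{G(v(t))}$. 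Since $G(v)\in\C([0,T];L^\infty_\nu(\Omega))$ by \eqref{gxu} and $\bs\Phi(v),F(v)\in L^\infty(Q_T)$ by \eqref{iqv0_ass}, the convective term $\bs\Phi(v)\cdot\nabla(w-u)$ is a bounded non-monotone perturbation that can be absorbed by a Gronwall estimate (using also \eqref{growth_ass}); a minor adaptation of Theorem \ref{teoIQV} then provides a unique strong solution $u=T(v)$.

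The next step is to establish $v$-uniform a priori bounds. The pointwise constraint immediately gives $\|\nabla u\|_{L^\infty(Q_T)^d}\le\sup_{\|s\|_\C\le R}\|G(s)\|_{L^\infty(\Omega)}\equiv M_R$, hence $u\in L^\infty(0,T;W^{1,\infty}_0(\Omega))$. For the crucial estimate $\partial_tu\in L^\infty(0,T;M(\Omega))$ I would employ the translation-in-time method of \cite{RodriguesSantos2000,RodriguesSantos2012}: apply the VI at times $t$ and $t+h$ using the rescaled test functions $\tfrac{\nu}{\nu+\eps_h}u(t\pm h)\in\K_{G(v(t\mp h))}$, where $\eps_h$ is the time modulus of continuity of $G(v(\cdot))$, subtract, divide by $h$ and let $h\to0^+$. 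The initial-time contribution is controlled by the assumption $\delta\Delta_pu_0\in M(\Omega)$ in \eqref{gxu}, while the remaining terms are dominated by the Lipschitz constants of $\bs\Phi$, $F$ and $G$ on $D_R\times[-M_R,M_R]$. This yields $\|\partial_tu\|_{L^\infty(0,T;M(\Omega))}\le C_R$ independently of $v\in D_R$ and of $\delta\in[0,1]$.

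With these bounds, $T:D_R\to D_R$ is well-defined for $R$ large enough, via the Sobolev embedding $W^{1,\infty}_0(\Omega)\hookrightarrow\C(\overline\Omega)$ and the $\partial_tu$-estimate. The image of $T$ is bounded in $L^\infty(0,T;W^{1,\infty}_0(\Omega))\cap W^{1,\infty}(0,T;M(\Omega))$, hence relatively compact in $\C(\overline Q_T)$ by Aubin--Lions, so $T$ is a compact operator. Its continuity in $\C(\overline Q_T)$ follows by adapting the continuous-dependence estimates of Theorem \ref{cdde}: if $v_n\to v$ in $\C(\overline Q_T)$, then $G(v_n)\to G(v)$ in $\C([0,T];L^\infty(\Omega))$ and $\bs\Phi(v_n),F(v_n)\to \bs\Phi(v),F(v)$ in $L^2(Q_T)$ by \eqref{gxu} and \eqref{iqv0_ass}, so \eqref{stabilityef}--\eqref{stabilityeff} give $T(v_n)\to T(v)$ in $\V_p\cap L^\infty(0,T;L^2(\Omega))$, upgraded to $\C(\overline Q_T)$-convergence by the uniform compactness above. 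Schauder's theorem then produces a fixed point $u=T(u)$, solving \eqref{iqvpisaarma} for $\delta>0$. The degenerate case $\delta=0$ is obtained by passing $\delta\to0^+$: the estimates are $\delta$-uniform, a subsequence $u^\delta$ converges in $\C(\overline Q_T)$ to a limit $u^0$, and $\delta\nabla_p u^\delta\rightharpoonup 0$ weakly in $L^{p'}(Q_T)^d$ since $|\nabla u^\delta|\le M_R$; the remaining terms pass to the limit by strong convergence and the continuity of $\bs\Phi,F,G$.

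The main obstacle will be the $L^\infty(0,T;M(\Omega))$-estimate on $\partial_tu$: the time translation must simultaneously control the variation of the convex set (through $G$), of the convective field (through $\bs\Phi$), and of the source (through $F$), all uniformly in $v\in D_R$ and in $\delta\in[0,1]$. Closing this estimate without losing regularity is the technical core of the argument, and the initial-data assumption $\delta\Delta_pu_0\in M(\Omega)$ in \eqref{gxu} is imposed precisely to furnish the required bound at $t=0$.
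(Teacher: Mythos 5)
Your overall architecture (freeze the nonlocal dependences, solve an auxiliary problem, derive $v$-uniform bounds, apply Schauder, then let $\delta\to0$) is the right shape, but the auxiliary problem you chose is not the one the paper relies on, and the step you yourself identify as the technical core does not close. The cited proofs in \cite{RodriguesSantos2000} and \cite{RodriguesSantos2012} do \emph{not} solve a variational inequality with the time-dependent convex set $\K_{G(v(t))}$; they replace the constraint by a bounded penalisation $k_\eps\big(|\nabla u_\eps|^2-G^2\big)$ inside a quasilinear parabolic equation (with an added vanishing viscosity when $\delta=0$), obtain the gradient bound from a maximum principle applied to $|\nabla u_\eps|^2$ --- this is exactly where the hypothesis $\bs\Phi\in W^{2,\infty}$ in \eqref{iqv0_ass} is consumed, a hypothesis your argument never uses, which is a warning sign --- and obtain the $L^\infty\big(0,T;L^1(\Omega)\big)$ bound on $\partial_t u_\eps$ by differentiating the penalised \emph{equation} in time and testing with an approximation of $\mathrm{sgn}(\partial_t u_\eps)$. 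The same penalisation scheme appears in the paper as Theorem~\ref{iqvfracat}, and Corollary~\ref{coro} explicitly refers to ``the a priori estimates for the approximating problem in the proof of Theorem~\ref{arma}''.

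The concrete gap in your route is the estimate $\partial_t u\in L^\infty\big(0,T;M(\Omega)\big)$ by time translations in the frozen variational inequality. That argument requires the admissible comparison factor $\tfrac{\nu}{\nu+\eps_h}$ to satisfy $\eps_h=O(h)$, i.e.\ $t\mapsto G(v(t))$ Lipschitz with values in $L^\infty(\Omega)$, uniformly over the iterates $v$. But your fixed-point set is a ball of $\C(\overline Q_T)$ and \eqref{gxu} only gives $G\in\C\big(\R;L^\infty_\nu(\Omega)\big)$, so $\eps_h$ is merely $o(1)$ and $\eps_h/h$ may blow up; even restricting $D_R$ to functions with $\partial_t v$ bounded in $M(\Omega)$ does not help, since mere continuity of $u\mapsto G(\cdot,u)$ destroys any Lipschitz modulus in time. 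Without that estimate you lose both the well-definedness of the strong formulation \eqref{iqvpisaarma} (which needs $\partial_t u(t)\in M(\Omega)$ for a.e.\ $t$ --- Theorem~\ref{teoIQV} only yields a weak solution when $g$ is merely continuous in time) and the compactness of $T$ in $\C(\overline Q_T)$ that your Schauder argument requires. To repair the proof you must work at the level of the penalised equation, where the time derivative estimate is obtained from the differentiated PDE rather than from translations in the inequality.
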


Although this result was proved in \cite{RodriguesSantos2012} for $\delta=0$  and only in the case $p=2$ for $\delta>0$, it can be proved for $p\neq 2$ exactly in the same way as in the previous framework of \cite{RodriguesSantos2000}, which corresponds to \eqref{iqvpisaarma} when $\bs\Phi\equiv\bs0$,  with $G(x,u)=G(u)$ and $F(x,t,u)=f(x,t)$, with only $f\in L^\infty(Q_T)$ and $\partial_tf\in M(\Omega)$.

We may consider the corresponding stationary quasi-variational inequality for $ u_\infty\in\K_{G[ u_\infty]}$, such that
\begin{equation}\label{estasymp}
\int_\Omega\big(\delta\nabla_p u_\infty+\bs\Phi_\infty( u_\infty)\big)\cdot\nabla(w- u_\infty)\ge\int_\Omega F_\infty( u_\infty)(w-u_\infty)\qquad\forall w\in\K_{G[ u_\infty]}
\end{equation}
for given functions $F_\infty=F_\infty(x,u):\overline{\Omega}\times\R$ and $\bs\Phi_\infty=\bs\Phi_\infty(x,u):\Omega\times\R\rightarrow\R^d$, continuous in $u$ and bounded in $x$ for all $|u|\le R$. In order to extend the asymptotic stabilization in time (for subsequences $t_n\rightarrow\infty$) obtained in \cite{RodriguesSantos2000} and \cite{RodriguesSantos2012}, we shall assume that \eqref{iqv0_ass} holds for $T=\infty$,
\begin{equation}
\label{hipinfty}
\bs\Phi(t)=\bs\Phi_\infty,\qquad\text{and}\qquad\partial_u F\le-\mu<0\ \text{ for all }t>0,
\end{equation}
and
\begin{equation}
\label{hipinfty2}
0<\nu\le G(x,u)\le L,\ \text{ for a.e. }x\in\Omega\quad\text{and all }  u\in\R,
\end{equation}
or there exists $M>0$ such that, for all $R\ge M$
\begin{equation}
\label{hipinfty3}
\nabla\cdot\bs\Phi(x,R)+F(x,t,R)\le0,\qquad \nabla\cdot\bs\Phi(x,-R)+F(x,t,-R)\ge0.
\end{equation}

Setting $\xi_R(t)=\displaystyle\int_\Omega\sup_{|u|\le R}\big|\partial_t F(x,t,u)\big|dx$ and supposing that, for $R\ge R_0$ and some constant $C_R>0$ we have
\begin{equation}
\label{asympt}
\sup_{0<t<\infty}\int_t^{t+1}\xi_R(\tau)d\tau\le C_R,\qquad{and}\qquad\int_t^{t+1}\xi_R(\tau)d\tau\underset{t\rightarrow\infty}{\longrightarrow}0,
\end{equation}
and
\begin{equation}
\label{asympt2}
F(x,t,u)\underset{t\rightarrow\infty}{\longrightarrow}F_\infty(x,u)\ \text{ for all }|u|\le R\ \text{ and a.e. }x\in\Omega,
\end{equation}
we may prove the following result
\begin{theorem}\label{tasymp}
For any $\delta\ge0$ and $1<p<\infty$, under the assumptions \eqref{iqv0_ass}-\eqref{gxu}  and \eqref{hipinfty}-\eqref{asympt2}, problem \eqref{estasymp} has a solution $u_\infty\in\K_{G[u_\infty]}$ which is the weak-$*$ limit in $W^{1,\infty}_0(\Omega)$ and strong limit in $\C^\alpha(\overline{\Omega})$, $0\le\alpha<1$, for some $t_n\rightarrow\infty$ of a sequence $\{u(t_n)\}_n$ with $u$ being a global solution of \eqref{iqvpisaarma}.\qed
\end{theorem}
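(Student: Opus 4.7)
The plan follows the scheme of \cite{RodriguesSantos2000,RodriguesSantos2012}: combine a uniform-in-time bound on the orbit with a time-translation argument and exploit the dissipativity $\partial_u F\le -\mu <0$ from \eqref{hipinfty}. Applying Theorem \ref{arma} on each finite interval and gluing, I first obtain a global solution $u\in L^\infty_{\mathrm{loc}}(0,\infty;W^{1,\infty}_0(\Omega))$ of \eqref{iqvpisaarma} with $\partial_t u\in L^\infty_{\mathrm{loc}}(0,\infty;M(\Omega))$. Under \eqref{hipinfty2} the pointwise constraint $|\nabla u(t)|\le G(u(t))\le L$ directly yields a uniform $W^{1,\infty}_0(\Omega)$-bound; under \eqref{hipinfty3} a Stampacchia truncation of $(u-R)^\pm$ with $R\ge M$, admissible as test by \eqref{gxu}, gives $\|u(t)\|_{L^\infty(\Omega)}\le R_0$ uniformly, and then the same gradient bound by bounding $G$ over $|u|\le R_0$. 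The compact embedding $W^{1,\infty}_0(\Omega)\hookrightarrow \C^\alpha(\overline\Omega)$, $0\le\alpha<1$, produces for any sequence $t_n\to\infty$ a subsequence (not relabelled) and $u_\infty\in W^{1,\infty}_0(\Omega)$ with $u(t_n)\stackrel{*}{\rightharpoonup}u_\infty$ in $W^{1,\infty}_0(\Omega)$ and $u(t_n)\to u_\infty$ in $\C^\alpha(\overline{\Omega})$; uniform convergence and continuity of $G$ give $u_\infty\in\K_{G[u_\infty]}$.

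Next I study the time translates $u_n(x,s):=u(x,t_n+s)$, $s\in[0,1]$, which solve a QVI with source $F_n(x,s,u):=F(x,t_n+s,u)$ and unchanged $\bs\Phi$ and $G$ by \eqref{hipinfty} and \eqref{gxu}. The uniform bounds above together with $\partial_s u_n$ bounded in $L^\infty(0,1;M(\Omega))$ allow an Aubin--Simon compactness step which, up to a further subsequence, yields $u_n\to\tilde u$ in $\C([0,1];\C^\alpha(\overline{\Omega}))$ and weak-$*$ in $L^\infty(0,1;W^{1,\infty}_0(\Omega))$, while $F_n(\cdot,s,u_n)\to F_\infty(\cdot,\tilde u)$ in $L^1(\Omega\times(0,1))$ by \eqref{asympt}, \eqref{asympt2} and dominated convergence. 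For each pair $s,s'\in[0,1]$, testing the QVI at time $s$ with $\frac{\nu}{\nu+\beta_n(s,s')}u_n(\cdot,s')$ (admissible by the rescaling trick of Theorem \ref{gest}, where $\beta_n(s,s'):=\|G[u_n(s)]-G[u_n(s')]\|_{L^\infty(\Omega)}$) and symmetrising in $s,s'$ produces, using the strict dissipativity,
\begin{equation*}
\tfrac12\tfrac{d}{ds}\|u_n(s)-u_n(s')\|_{L^2(\Omega)}^2+\mu\,\|u_n(s)-u_n(s')\|_{L^2(\Omega)}^2\le\eta_n(s,s'),
\end{equation*}
where $\eta_n\to 0$ because $\beta_n(s,s')\to 0$ (from the $\C^\alpha$-convergence of $u_n$) and $\int_{t_n}^{t_n+1}\xi_R(\tau)d\tau\to 0$ by \eqref{asympt}. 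Gronwall then forces $\tilde u(s)\equiv u_\infty$ for every $s\in[0,1]$, so $\partial_s\tilde u\equiv 0$.

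To close, I pass to the limit in the time-integrated QVI on $(0,1)$ by a Mosco-type recovery: given $w\in\K_{G[u_\infty]}$, set $w_n(s):=\frac{\nu}{\nu+\gamma_n(s)}w\in\K_{G[u_n(s)]}$ with $\gamma_n(s):=\|G[u_n(s)]-G[u_\infty]\|_{L^\infty(\Omega)}\to 0$ uniformly in $s$. The monotonicity \eqref{monotone} of $\nabla_p$, weak lower semicontinuity, the continuity of $\bs\Phi_\infty$ and $F_\infty$ on the bounded range of $u_n$, together with $\int_0^1\langle\partial_s u_n,w_n-u_n\rangle_{M(\Omega)\times\C(\overline{\Omega})}\to 0$ (because $\tilde u$ is $s$-constant and $w_n\to w$ uniformly), yield in the limit precisely \eqref{estasymp} for $u_\infty$. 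The main difficulty is this simultaneous passage: the time derivative lives only in $L^\infty(0,\infty;M(\Omega))$, so the argument must be carried in the duality $M(\Omega)\times\C(\overline{\Omega})$, and the implicit convex set $\K_{G[u_n(s)]}$ has to be recovered to $\K_{G[u_\infty]}$ uniformly in $s$; the $\C^\alpha$-convergence from the compactness step is precisely what makes $w_n-u_n$ a legitimate continuous test function and drives both convergences at once.
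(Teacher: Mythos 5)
The paper states this theorem without proof (the \qed follows the statement directly), deferring to the asymptotic‐stabilization arguments of \cite{RodriguesSantos2000} and \cite{RodriguesSantos2012}. Your overall scheme — global solution by gluing, uniform $W^{1,\infty}_0$ bounds, compactness of the translates $u_n(s)=u(t_n+s)$, dissipativity from $\partial_u F\le-\mu$, and a recovery-sequence passage to the limit in the duality $M(\Omega)\times\C(\overline\Omega)$ — is indeed the scheme of those references, and most of the individual steps are sound.

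There is, however, a genuine gap in the step where you prove that the limit $\tilde u$ of the translates is independent of $s$. Your differential inequality carries the error term $\eta_n(s,s')$ coming from the mismatch of the convex sets $\K_{G[u_n(s)]}$ and $\K_{G[u_n(s')]}$, of size of order $\beta_n(s,s')/\nu$ with $\beta_n(s,s')=\|G[u_n(s)]-G[u_n(s')]\|_{L^\infty(\Omega)}$, and you claim $\beta_n(s,s')\to0$ ``from the $\C^\alpha$-convergence of $u_n$''. But that convergence only gives $\beta_n(s,s')\to\|G[\tilde u(s)]-G[\tilde u(s')]\|_{L^\infty(\Omega)}$, which vanishes precisely when $\tilde u(s)=\tilde u(s')$ — the conclusion you are trying to reach; and since $G$ is merely continuous in $u$ by \eqref{gxu}, not Lipschitz, you cannot absorb $\beta_n$ into the dissipative term either. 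So as written the argument is circular. The way the cited works avoid this is to act at the level of the penalized approximating equation, where no constraint set appears: differentiating that equation in time and using $\partial_u F\le-\mu$ together with an $L^1$-contraction/maximum-principle argument yields $\|\partial_t u_\eps(t)\|_{L^1(\Omega)}\le e^{-\mu t}C_0+\int_0^t e^{-\mu(t-\tau)}\xi_R(\tau)\,d\tau\to0$ by \eqref{asympt}, uniformly in $\eps$; passing to the limit gives $\|\partial_t u(t)\|_{M(\Omega)}\to0$, hence $\|u(t_n+s)-u(t_n)\|_{M(\Omega)}\le\int_{t_n}^{t_n+s}\|\partial_t u\|_{M(\Omega)}\to0$ and the $s$-independence of $\tilde u$ is automatic; after that your final Mosco-type passage (which is fine) closes the proof. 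A secondary, more minor point: under \eqref{hipinfty3} the uniform $L^\infty$ bound is more safely obtained by comparison with the constants $\pm R$ for the approximating quasilinear equation rather than by testing the quasi-variational inequality, whose time derivative is only a measure, against the truncations $(u-R)^\pm$.
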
 

We observe that the degenerate case $\delta=0$ corresponds to a nonlinear conservation law for which we could also consider formally the Lagrange multiplier problem of finding $\lambda=\lambda(x,t)$ associated with the constraint $|\nabla u|\le G[u]$ and such that
$$\lambda\ge0,\qquad\lambda(G[u]-|\nabla u|)=0$$
and
$$\partial_tu-\nabla\cdot\Phi(u)-\nabla\cdot(\lambda u)=F(u),\ \text{ in }Q_T,$$
which is an open problem. However, in contrast with conservation laws without the gradient restriction, this problem has no spatial shock fronts nor boundary layers for the vanishing viscosity limit, since Dirichlet data may be prescribed for $u$ on the whole boundary $\partial\Omega$.

It is clear that both results of Theorems~\ref{arma} and \ref{tasymp} apply, in particular, to the linear transport equation
$$\partial_tu+\bs b\cdot\nabla u+cu=f\ $$
for a given vector field $\bs b\in W^{2,\infty}(Q_T)^d$ and given functions $c=c(x,t)$ and $f=f(x,t)$ in $W^{1,\infty}(Q_T)$, corresponding to set
\begin{equation}\label{phib}
\bs\Phi(u)=-\bs b u\quad\text{and}\quad F(u)=f-(c+\nabla\cdot\bs b)u.
\end{equation}

Nevertheless, in this case, if we analyse the a priori estimates for the approximating problem in the proof of Theorem~\ref{arma} as in Section 3.1 of \cite{RodriguesSantos2012}, the assumptions on the coefficients of the linear transport operator can be significantly weakened and it is possible to prove the following result
\begin{corollary} \label{coro} If $\bs \Phi$ and $F$ are given by \eqref{phib} with $\bs b\in L^\infty(Q_T)^d $, $\nabla\cdot\bs b$, $c$, $f\in L^\infty(Q_T)$, $\partial_t\bs b\in L^r(Q_T)^d$, $r>1$ and $\partial_tc$, $\partial_tf\in L^1(Q_T)$, assuming \eqref{gxu} for $\delta\ge0$ and $1<p<\infty$, the quasi-variational inequality \eqref{iqvpisaarma} with linear lower order terms has a strong solution $\bs u\in L^\infty\big(0,T;W^{1,\infty}_0(\Omega)\big)\cap\C(\overline Q_T)$ such that $\partial_t u\in L^1\big(0,T;M(\Omega)\big)$.
\qed
\end{corollary}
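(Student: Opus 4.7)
The plan is to deduce the corollary from Theorem \ref{arma} by a coefficient regularization and compactness argument, carefully tracking how the a priori bounds obtained in the proof of that theorem (Section 3.1 of \cite{RodriguesSantos2012}) depend on the norms of $\bs b$, $c$ and $f$. First I would mollify $\bs b$, $c$, $f$ in the time variable to build approximating sequences $\bs b^\eps$, $c^\eps$, $f^\eps$ satisfying the stronger regularity assumption \eqref{iqv0_ass} (linearity in $u$ is preserved automatically), with the $L^\infty$ norms of $\bs b^\eps$, $\nabla\cdot\bs b^\eps$, $c^\eps$, $f^\eps$ controlled by those of the originals and
\begin{equation*}
\|\partial_t\bs b^\eps\|_{L^r(Q_T)^d}\le \|\partial_t\bs b\|_{L^r(Q_T)^d},\quad
\|\partial_tc^\eps\|_{L^1(Q_T)}\le\|\partial_tc\|_{L^1(Q_T)},\quad
\|\partial_tf^\eps\|_{L^1(Q_T)}\le\|\partial_tf\|_{L^1(Q_T)}.
\end{equation*}
Theorem \ref{arma} then provides solutions $u^\eps\in L^\infty\bigl(0,T;W^{1,\infty}_0(\Omega)\bigr)\cap\C(\overline Q_T)$ of the corresponding approximate quasi-variational inequality with $\partial_t u^\eps\in L^\infty\bigl(0,T;M(\Omega)\bigr)$.

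Next I would re-examine the a priori estimates. For the linear structure \eqref{phib}, the growth condition \eqref{growth_ass} holds with constants $c_1,c_2$ depending only on the $L^\infty$ norms above, so the standard Gronwall argument yields an $\eps$-independent $L^\infty(Q_T)$ bound for $u^\eps$; the constraint $|\nabla u^\eps|\le G(u^\eps)\in L^\infty(Q_T)$ is automatic. The delicate point is the estimate for $\partial_t u^\eps$. Applying the usual time-translation technique to the quasi-variational inequalities satisfied by $u^\eps(\cdot,t)$ and $u^\eps(\cdot,t+h)$ (using the rescaling $\tfrac{\nu}{\nu+\eta_h}$ with $\eta_h=\|G(u^\eps(t+h))-G(u^\eps(t))\|_{L^\infty(\Omega)}$ to move between the respective convex sets), then dividing by $h$ and letting $h\to 0$, one obtains
\begin{equation*}
\|\partial_t u^\eps(t)\|_{M(\Omega)}\le C\bigl(\|\partial_t\bs b^\eps(t)\|_{L^1(\Omega)^d}\|\nabla u^\eps\|_{L^\infty}+\|\partial_tc^\eps(t)\|_{L^1(\Omega)}\|u^\eps\|_{L^\infty}+\|\partial_tf^\eps(t)\|_{L^1(\Omega)}\bigr)
\end{equation*}
pointwise in $t$. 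Since $r>1$ and $|\Omega|<\infty$, H\"older's inequality yields $\|\partial_t\bs b^\eps(t)\|_{L^1(\Omega)^d}\le|\Omega|^{1/r'}\|\partial_t\bs b^\eps(t)\|_{L^r(\Omega)^d}\in L^r(0,T)\subset L^1(0,T)$ uniformly in $\eps$; the other two terms on the right are bounded in $L^1(0,T)$ by hypothesis. This gives the required uniform $L^1\bigl(0,T;M(\Omega)\bigr)$ bound for $\partial_t u^\eps$.

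Finally I would pass to the limit. The uniform bounds on $u^\eps$ in $L^\infty\bigl(0,T;W^{1,\infty}_0(\Omega)\bigr)$ together with $\partial_t u^\eps\in L^1\bigl(0,T;M(\Omega)\bigr)$ provide, via an Aubin--Lions compactness argument, a subsequence with $u^\eps\to u$ strongly in $\C(\overline Q_T)$ and weakly-$*$ in $L^\infty\bigl(0,T;W^{1,\infty}_0(\Omega)\bigr)$. The uniform convergence, together with $G\in\C(\R;L^\infty_\nu(\Omega))$, gives $G(u^\eps)\to G(u)$ in $L^\infty(Q_T)$ and hence the Mosco convergence of the convex sets $\K_{G(u^\eps(t))}$ to $\K_{G(u(t))}$. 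Combining this with the strong convergences $\bs b^\eps\to\bs b$, $c^\eps\to c$, $f^\eps\to f$ in $L^q(Q_T)$ for every $q<\infty$, the approximate quasi-variational inequality passes to the limit, showing that $u$ solves \eqref{iqvpisaarma} with $\bs\Phi$ and $F$ as in \eqref{phib}; the initial condition is retained by the $\C(\overline Q_T)$ convergence. The main obstacle is precisely the $L^1\bigl(0,T;M(\Omega)\bigr)$ bound on $\partial_t u^\eps$, since Theorem \ref{arma} produces an $L^\infty\bigl(0,T;M(\Omega)\bigr)$ estimate using the $W^{1,\infty}$ regularity in $(x,t)$ of $\bs\Phi$ and $F$, and the relaxation to $L^r$ (respectively $L^1$) integrability in time requires the strict inequality $r>1$ precisely to keep the forcing in the equation satisfied by $\partial_t u^\eps$ time-summable.
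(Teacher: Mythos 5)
There is a genuine gap, and it sits exactly at the point you yourself identify as ``the main obstacle'': the uniform $L^1\big(0,T;M(\Omega)\big)$ bound on $\partial_t u^\eps$. You propose to obtain it by the time--translation technique applied to the quasi-variational inequality satisfied by $u^\eps$, rescaling by $\tfrac{\nu}{\nu+\eta_h}$ with $\eta_h=\|G(u^\eps(t+h))-G(u^\eps(t))\|_{L^\infty(\Omega)}$. This cannot work as stated, for two reasons. First, the convex set depends on the solution itself and \eqref{gxu} only assumes $G\in\C\big(\R;L^\infty_\nu(\Omega)\big)$, i.e.\ mere continuity in $u$; hence $\eta_h/h$ is not controlled by $|u^\eps(t+h)-u^\eps(t)|/h$, and even if $G$ were Lipschitz the limit of $\eta_h/h$ would reintroduce $\partial_t u^\eps$ on the right-hand side of your estimate, making the argument circular; your displayed bound silently drops this term. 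Second, the time-translation device pairs $\partial_t u^\eps$ with $\big(u^\eps(t+h)-u^\eps(t)\big)/h$ and therefore produces $L^2(\Omega)$-type information on $\partial_t u^\eps$, not a total-variation bound: the $M(\Omega)$ estimate in Theorem~\ref{arma} is \emph{not} obtained at the level of the inequality but by differentiating the penalized approximating equation in time and testing with (a smoothing of) $\operatorname{sgn}(\partial_t u_\eps)$, and it is precisely there that $\partial_t\bs\Phi$ and $\partial_t F$ — i.e.\ $\partial_t\bs b$, $\partial_t c$, $\partial_t f$ — enter. This is what the paper means by ``analysing the a priori estimates for the approximating problem in the proof of Theorem~\ref{arma} as in Section~3.1 of \cite{RodriguesSantos2012}'': one reruns those equation-level estimates for the linear structure \eqref{phib} and checks that they only consume $\|\partial_t\bs b\|_{L^r(Q_T)}$, $\|\partial_t c\|_{L^1(Q_T)}$ and $\|\partial_t f\|_{L^1(Q_T)}$, which degrades the conclusion from $\partial_t u\in L^\infty\big(0,T;M(\Omega)\big)$ to $L^1\big(0,T;M(\Omega)\big)$.

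A secondary problem is the black-box use of Theorem~\ref{arma}: hypothesis \eqref{iqv0_ass} requires $\bs\Phi\in W^{2,\infty}$ in \emph{all} variables, so mollifying $\bs b$ only in time is not enough (you need spatial smoothing too, and then the $W^{2,\infty}$ norms of $\bs b^\eps$ blow up as $\eps\to0$). To know that the resulting solutions $u^\eps$ nevertheless obey $\eps$-uniform bounds you must know which norms of the coefficients the estimates of Theorem~\ref{arma} actually depend on — and that again forces you inside the proof of the approximating problem, at which point your route collapses onto the paper's. The final compactness and passage to the limit you sketch (Arzel\`a--Ascoli/Aubin--Lions plus Mosco convergence of $\K_{G(u^\eps(t))}$) is reasonable once the uniform estimates are in hand, but without a correct derivation of the $\partial_t u^\eps$ bound the argument does not close.
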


However, in this case, for the corresponding variational inequality, i.e. when $G\equiv g(x,t)$, in \cite{RodriguesSantos2015} it was shown that the problem is well-posed  and has similar stability properties, as in Section \ref{3.1}, with coefficients only in $L^2$.

Suppose that, for some $l\in\R$,
\begin{equation}\label{bcl}
\bs b\in L^2(Q_T)^d,\quad c\in L^2(Q_T)\ \text{ and }\ c-\frac12\nabla\cdot\bs b\ge l\ \text{ in } Q_T,
\end{equation}
and
\begin{equation}\label{fgu0}
f\in L^2(Q_T), \quad g\in W^{1,\infty}\big(0,T;L^\infty(\Omega)\big),\ g\ge\nu>0\ \text{ and }\ u_0\in\K_{g(0)}.
\end{equation}

\begin{theorem} {\em \cite{RodriguesSantos2015}} With the assumptions \eqref{bcl} and \eqref{fgu0}, there exists a unique strong solution
$$w\in L^\infty\big(0,T;W^{1,\infty}_0(\Omega)\big)\cap\C(\overline Q_T),\qquad \partial_t w\in L^2(Q_T),$$
to the variational inequality
\begin{equation}\label{ivbc}
\begin{cases}
w(t)\in\K_{g(t)},\ t\in(0,T),\quad w(0)=u_0,\vspace{0.5mm}\\
\displaystyle\int_\Omega \big(\partial_t w(t)+\bs b(t)\cdot\nabla w(t)+c(t)w(t)\big)(v-w(t))\vspace{0.5mm}\\
\qquad\qquad\qquad\displaystyle\hfill{	\ge \int_\Omega f(t)(v-w(t)),\quad\forall v\in\K_{g(t)},\ \text{ for a.e. }t\in(0,T).}
\end{cases}
\end{equation}\qed
\end{theorem}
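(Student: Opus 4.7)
The plan is to combine a coefficient regularization with the existence result of Corollary~\ref{coro}, a uniform $L^2(Q_T)$ estimate on the time derivative extracted from the time-Lipschitz regularity of $g$, and a standard comparison argument based on the coercivity hypothesis for uniqueness.

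First I would approximate the data by sequences $\bs b_n\in L^\infty(Q_T)^d$ with $\nabla\cdot\bs b_n\in L^\infty(Q_T)$, and $c_n,f_n\in L^\infty(Q_T)$, with time derivatives in the spaces required by Corollary~\ref{coro}, such that $\bs b_n\to\bs b$, $c_n\to c$, $f_n\to f$ in $L^2(Q_T)$ and $c_n-\tfrac12\nabla\cdot\bs b_n\ge l_n\to l$ is preserved; also choose $u_{0n}\in\K_{g(0)}$ smooth and converging to $u_0$. Taking $G[u]\equiv g$ reduces the quasi-variational inequality of Corollary~\ref{coro} to the variational problem \eqref{ivbc} for the data $(\bs b_n,c_n,f_n,u_{0n})$, and produces strong approximate solutions $w_n\in L^\infty(0,T;W^{1,\infty}_0(\Omega))\cap\C(\overline Q_T)$ with $|\nabla w_n|\le g$ a.e.

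The heart of the proof consists in three uniform estimates. The bound $\|w_n\|_{L^\infty(0,T;W^{1,\infty}_0)}\le\|g\|_{L^\infty(Q_T)}$ is immediate from the gradient constraint, and the $L^\infty(0,T;L^2(\Omega))$ bound follows by testing with $v\equiv 0\in\K_{g}$ (since $g\ge\nu>0$), integrating by parts the convective term to produce $c_n-\tfrac12\nabla\cdot\bs b_n$, and applying Grönwall. The crucial and most delicate estimate is the uniform bound $\|\partial_t w_n\|_{L^2(Q_T)}\le C$, for which I would adapt the scaling used in the proof of Theorem~\ref{gest}: for $|s-t|$ small, setting $\alpha(s,t)=1-\nu^{-1}\|g(s)-g(t)\|_{L^\infty(\Omega)}\ge 1-\nu^{-1}|s-t|\|g\|_{W^{1,\infty}(0,T;L^\infty(\Omega))}$, the function $\alpha(s,t)w_n(s)$ belongs to $\K_{g(t)}$; plugging it as test function in the inequality at time $t$ and symmetrically at time $s$, subtracting and dividing by $(s-t)^2$, a Steklov-averaging argument gives a uniform bound on the difference quotient of $w_n$ in $L^2(Q_T)$, where the indefinite first-order terms are absorbed via the coercivity $c_n-\tfrac12\nabla\cdot\bs b_n\ge l_n$ and the $L^\infty$ bound on $\nabla w_n$.

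With these estimates in hand, I would extract a subsequence along which $w_n\rightharpoonup w$ weakly-$*$ in $L^\infty(0,T;W^{1,\infty}_0(\Omega))$, $\partial_t w_n\rightharpoonup \partial_t w$ weakly in $L^2(Q_T)$, and, by Aubin--Lions, strongly in $\C([0,T];L^2(\Omega))$ and in $L^2(0,T;\C^\alpha(\overline\Omega))$ for any $0<\alpha<1$. Strong convergence of $\nabla w_n$ is not needed since $\bs b_n\cdot\nabla w_n\rightharpoonup \bs b\cdot\nabla w$ weakly in $L^1(Q_T)$ by combining $\bs b_n\to\bs b$ in $L^2$ with the uniform $L^\infty$ bound on $\nabla w_n$, and similarly $c_nw_n\to cw$; the constraint $w\in\K_g$ is preserved by weak-$*$ closure, and convexity then allows passage to the limit in the approximate variational inequality. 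For uniqueness, testing the inequality for $w_1$ with $v=w_2$ and that for $w_2$ with $v=w_1$, summing, and integrating the convective term as $\int_\Omega\bs b\cdot\nabla(w_1-w_2)(w_1-w_2)=-\tfrac12\int_\Omega\nabla\cdot\bs b\,(w_1-w_2)^2$, the hypothesis $c-\tfrac12\nabla\cdot\bs b\ge l$ and Grönwall give $w_1=w_2$. The main obstacle is the uniform $L^2(Q_T)$ bound on $\partial_tw_n$: with $\bs b, c$ only in $L^2$, the time regularity cannot be read from the equation itself and must be forced from the time-Lipschitz character of $g$ via the scaling argument, and it is essential that the coercivity absorbs the lower-order terms that appear when one bounds the difference quotients.
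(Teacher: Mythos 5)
Your overall skeleton (regularise the coefficients, produce approximate solutions, derive uniform bounds, pass to the limit by compactness, prove uniqueness from the coercivity $c-\frac12\nabla\cdot\bs b\ge l$) is reasonable, and your uniqueness argument is correct. Note first that the survey itself gives no proof of this statement: it is quoted from \cite{RodriguesSantos2015}, where existence is obtained by adding a vanishing viscosity $-\eps\Delta$ together with a penalisation $k_\eps(|\nabla w_\eps|^2-g^2)$ of the constraint, and the decisive estimate $\|\partial_t w_\eps\|_{L^2(Q_T)}\le C$ is obtained by multiplying the penalised \emph{equation} by $\partial_t w_\eps$: the transport and zero-order terms are then controlled by $\big(\|\bs b\|_{L^2(Q_T)}\|\nabla w_\eps\|_{L^\infty}+\|c\|_{L^2(Q_T)}\|w_\eps\|_{L^\infty}+\|f\|_{L^2(Q_T)}\big)\|\partial_t w_\eps\|_{L^2(Q_T)}$ and absorbed, while the hypothesis $g\in W^{1,\infty}\big(0,T;L^\infty(\Omega)\big)$ enters only through the term $\int k_\eps'\,\partial_t(g^2)$ produced by differentiating the penalty functional in time.

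The genuine gap in your proposal is precisely the step you identify as crucial: the uniform bound on $\partial_t w_n$ in $L^2(Q_T)$ cannot be obtained by the time-translation/scaling argument you describe. If you test the inequality at time $t$ with $\alpha\,w_n(t+h)$ and at time $t+h$ with $\alpha\,w_n(t)$ and add, the resulting error contains, on one hand, the term $(1-\alpha)\int_\Omega\big(P_n(t)\,w_n(t+h)+P_n(t+h)\,w_n(t)\big)$ with $P_n=\partial_t w_n+\bs b_n\cdot\nabla w_n+c_nw_n-f_n$; since $1-\alpha=O(h)$ but the bracket is only $O(1)$ (indeed $P_n$ is essentially the Lagrange multiplier, and its $L^2$ size is exactly what you are trying to bound, so any attempt to control it is circular), this contribution is $O(1/h)$ after division by $h^2$. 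On the other hand, the differences $\bs b_n(t+h)-\bs b_n(t)$, $c_n(t+h)-c_n(t)$ and $f_n(t+h)-f_n(t)$ appear paired against the difference quotient of $w_n$, and under \eqref{bcl}--\eqref{fgu0} these coefficients are merely $L^2$ in time, hence admit no $O(h)$ modulus of continuity uniformly in $n$ — any quantitative bound would have to degenerate as the regularisation is removed. This is not a technicality: the time regularity of $w$ here does \emph{not} come from the time regularity of the convex sets alone but from testing the regularised equation with $\partial_t w_\eps$, which requires the elliptic/penalty structure that your scheme, built only on Corollary~\ref{coro} (which in any case yields only $\partial_t u\in L^1\big(0,T;M(\Omega)\big)$, weaker than the $L^2(Q_T)$ regularity claimed), does not provide. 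To repair the proof you should keep your limit passage and uniqueness arguments but replace the difference-quotient step by the viscosity-plus-penalisation approximation and the $\partial_t w_\eps$ energy estimate sketched above.
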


The corresponding stationary problem for
\begin{equation}\label{ivbcinfty}
w_\infty\in\K_{g_\infty}:\quad\int_\Omega \big(\bs b_\infty\cdot\nabla u_\infty+c_\infty w_\infty\big)(v-w_\infty)\ge \int_\Omega f_\infty(v-w_\infty)\quad\forall v\in\K_{g_\infty}
\end{equation}
can be solved uniquely for $L^1$ data
\begin{equation}\label{bcinfty}
\bs b_\infty\in L^1(\Omega)^d,\quad c_\infty\in L^1(\Omega)\ \text{ and }\ c_\infty-\frac12\nabla\cdot\bs b_\infty\ge\mu\ \text{ in }\Omega,
\end{equation}
with
\begin{equation}\label{fginfty}
\bs f_\infty\in L^1(\Omega),\quad g_\infty\in L^\infty(\Omega),\  g_\infty\ge\nu>0.
\end{equation}and is the asymptotic limit of the solution of \eqref{ivbc}.

\begin{theorem}{\em \cite{RodriguesSantos2015}}
Under the assumptions \eqref{bcinfty} and \eqref{fginfty}, if
$$\int_t^{t+1}\int_\Omega\big(|f(\tau)-f_\infty|+|\bs b(\tau)-\bs b_\infty|+|c(\tau)-c_\infty|\big)dxd\tau\underset{t\rightarrow\infty}{\longrightarrow}0$$
and there exists $\gamma>\frac12$ such that, for some constant $C>0$,
$$\|g(t)-g_\infty\|_{L^\infty(\Omega)}\le\frac{C}{t^\gamma},\quad t>0,$$
then
$$w(t)\underset{t\rightarrow\infty}{\longrightarrow} w_\infty$$
where $w$ and $w_\infty$ are, respectively, the solutions of the variational inequality \eqref{ivbc} and \eqref{ivbcinfty}.\qed
\end{theorem}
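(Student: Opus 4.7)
The plan is to imitate the energy argument used for Theorem~\ref{asympttt}, adapted to the transport setting where the only coercivity comes from $c - \tfrac12\nabla\!\cdot\bs b \ge l$. Set $\overline w(t) = w(t) - w_\infty$ and $\alpha(t) = \nu/\bigl(\nu + \|g(t) - g_\infty\|_{L^\infty(\Omega)}\bigr)$. By the rescaling trick already used in the proofs of Theorems~\ref{gest} and~\ref{cdde}, $\alpha(t) w_\infty \in \K_{g(t)}$ and $\alpha(t) w(t) \in \K_{g_\infty}$, while the hypothesis on $g$ gives $0 \le 1 - \alpha(t) \le C/(\nu t^\gamma)$ with $\gamma > \tfrac12$.

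Next, I would test \eqref{ivbc} at time $t$ with $v = \alpha(t) w_\infty$ and \eqref{ivbcinfty} with $v = \alpha(t) w(t)$; writing $\alpha(t) w_\infty - w(t) = -\overline w(t) - (1-\alpha(t))w_\infty$ and analogously $\alpha(t) w(t) - w_\infty = \overline w(t) - (1-\alpha(t))w(t)$, adding the two inequalities and integrating by parts via $\int_\Omega \bs b \cdot \nabla \overline w\, \overline w = -\tfrac12 \int_\Omega (\nabla\!\cdot\bs b)|\overline w|^2$, I reach the scalar differential inequality
$$\tfrac12 \tfrac{d}{dt} \|\overline w(t)\|_{L^2(\Omega)}^2 + l\, \|\overline w(t)\|_{L^2(\Omega)}^2 \le \mathcal R_1(t) + (1-\alpha(t))\,\mathcal R_2(t),$$
where $\mathcal R_1$ groups the perturbations from $\bs b - \bs b_\infty$, $c - c_\infty$, $f - f_\infty$ tested against $\overline w$, while $\mathcal R_2$ gathers the convex-set mismatch contributions tested against $w_\infty$ or $w(t)$ and involving $\partial_t w$, $\bs b$, $c$, $f$.

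The a priori bound $|\nabla w|, |\nabla w_\infty| \le \max\bigl(\|g\|_{L^\infty(Q_T)}, \|g_\infty\|_{L^\infty(\Omega)}\bigr)$ together with Poincaré give uniform $L^\infty$ bounds on $w$ and $w_\infty$, so $|\mathcal R_1(t)|$ is dominated by $\|\overline w\|_{L^2(\Omega)}$ times the $L^1(\Omega)$-differences of the data, whose sliding-window mean vanishes by hypothesis. The factor $(1-\alpha(t)) \le C/t^\gamma$ with $\gamma > \tfrac12$ is calibrated so that a Cauchy-Schwarz pairing against the local $L^2$ bound on $\partial_t w$ supplied by the strong-solution regularity makes the contribution of $(1-\alpha)\mathcal R_2$ over any tail interval vanish. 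Concluding is then an application of the standard tail lemma for scalar inequalities $y' + 2l y \le \Phi$: if $y$ is bounded and $\int_t^{t+1}|\Phi| \to 0$, then $y(t) \to 0$. In the generic case $l > 0$ this is immediate; when $l \le 0$ one first establishes a uniform-in-time bound on $\|\overline w(t)\|_{L^2(\Omega)}$ from a global version of Theorem~\ref{cdde} and then restarts the argument on a time window $(T_0, \infty)$ where the $L^1$-means of $\bs b - \bs b_\infty$ and $c - c_\infty$ are small enough that the asymptotic coercivity $\mu > 0$ from \eqref{bcinfty} effectively dominates.

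The principal obstacle is the global-in-time control of the term $(1-\alpha(t))\int_\Omega \partial_t w(t)\, w_\infty$, since the regularity $\partial_t w \in L^2(Q_T)$ is only known locally and no coercive second-order smoothing is available in this first-order transport problem. The threshold $\gamma > \tfrac12$ in the hypothesis is precisely what is needed so that the weighted pairing $\int_0^\infty (1-\alpha(s))^2\,\|\partial_t w(s)\|_{L^2(\Omega)}^2\,ds$ can be estimated on each sliding window, mirroring the role of the identical critical exponent in Theorem~\ref{asympttt}.
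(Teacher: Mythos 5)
First, note that the survey does not actually prove this theorem: it is quoted from \cite{RodriguesSantos2015} and closed immediately with \qed, so your proposal can only be measured against the analogous arguments the paper does give (Theorems~\ref{gest}, \ref{cdde} and \ref{asympttt}). Against that template your skeleton is the right one: the scaling $\alpha=\nu/(\nu+\|g-g_\infty\|_{L^\infty})$ to produce admissible test functions in the two convex sets, the energy identity with $\int_\Omega\bs b\cdot\nabla\overline w\,\overline w=-\tfrac12\int_\Omega(\nabla\cdot\bs b)|\overline w|^2$, the uniform $L^\infty$ bounds on $w$, $w_\infty$ and their gradients coming from the constraint, and a tail lemma for $y'+2\mu y\le\Phi$ with $\int_t^{t+1}\Phi\to0$. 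Two points, however, need repair. The lesser one: the coercivity has to be taken from $\mu$ in \eqref{bcinfty}, not from $l$ in \eqref{bcl}, which is an arbitrary real; and when you split $\bs b=\bs b_\infty+(\bs b-\bs b_\infty)$ you cannot integrate the difference by parts, because $\nabla\cdot(\bs b-\bs b_\infty)$ is not controlled by the hypotheses. Instead, keep $\int_\Omega(\bs b-\bs b_\infty)\cdot\nabla\overline w\,\overline w$ as it stands and bound it by $\|\nabla\overline w\|_{L^\infty}\|\overline w\|_{L^\infty}\|\bs b(\tau)-\bs b_\infty\|_{L^1(\Omega)}$, i.e.\ absorb it into $\mathcal R_1$. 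Then the differential inequality reads $\tfrac12 y'+\mu y\le\Phi$ with vanishing window means, and no ``restarting on a window where the means are small'' is needed; your description of $l>0$ as the generic case and of the perturbation being dominated by the asymptotic coercivity points in the wrong direction.

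The more serious issue is the one you yourself flag: the term $(1-\alpha(t))\int_\Omega\partial_t w(t)\,w_\infty$. As you set the argument up, you need $\sup_{t}\int_t^{t+1}\|\partial_t w\|_{L^2(\Omega)}^2\,d\tau<\infty$, which does not follow from the existence theorem (it only gives $\partial_t w\in L^2(Q_T)$ for finite $T$, with constants a priori depending on $T$), and you do not supply it; as written the proof does not close. But the gap is avoidable rather than essential: freeze the scaling factor on each tail. For $\tau\ge s$ put $\beta_s=\sup_{\tau\ge s}\|g(\tau)-g_\infty\|_{L^\infty(\Omega)}\le C s^{-\gamma}$ and $\alpha_s=\nu/(\nu+\beta_s)$, which is admissible for every $\tau\ge s$; then the troublesome term integrates exactly,
\begin{equation*}
(1-\alpha_s)\int_s^t\int_\Omega\partial_\tau w\,w_\infty\,d\tau
=(1-\alpha_s)\Big[\int_\Omega w(\tau)\,w_\infty\Big]_{\tau=s}^{\tau=t}
\le C(1-\alpha_s)\le \tfrac{C}{s^\gamma},
\end{equation*}
using only the uniform $L^2$ bound on $w$. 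This is precisely the device used in the proof of Theorem~\ref{cdde}, where $\beta$ is a single constant over the whole time interval and the $\partial_t(\bs u_1\cdot\bs u_2)$ contribution to $\Theta$ is handled by exact integration as in \eqref{theta}. With that modification, your time-dependent $\alpha(t)$, the Cauchy--Schwarz pairing against $\|\partial_t w\|_{L^2}$, and the claimed calibration of $\gamma>\tfrac12$ to that pairing all become unnecessary; the exponent $\gamma$ instead enters through the balance between the length $t-s$ of the Gronwall window and the decay $s^{-\gamma}$ of the convex-set mismatch, as in Theorem~\ref{asympttt}.
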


\subsection{The quasi-variational inequality via compactness and monotonicity} 

The results in Section \ref{ivgde} are for scalar functions and  $L=\nabla$. As the arguments in the proof that $\partial_tu$ is a Radon measure do not apply to the vector cases, we consider the weak quasi-variational inequality  for a given $\delta\ge0$, for $\bs u=\bs u^\delta$,
\begin{equation}\label{iqve}
\left\{\begin{array}{l}
\bs u\in\K_{G[\bs u]},\vspace{1mm}\\
\displaystyle\int_0^T\langle\partial_t\bs v,\bs v-\bs u\rangle_p+\delta\int_{Q_T} \L_p\bs u\cdot\text{ L}(\bs v-\bs u)\ge\displaystyle\int_{Q_T}\bs f\cdot(\bs v-\bs u)\vspace{1mm}\\
\hspace{3cm}\displaystyle-\frac12\int_\Omega|\bs v(0)-\bs u_0|^2,
\forall\,\bs v\in \Y_p\text{ such that }
\bs v\in\K_{G[\bs u]},
\end{array}
\right.
\end{equation}
where $\langle\,\cdot\,,\,\cdot\,\rangle_p$ denotes the duality pairing between $\X_p'\times\X_p$.

\begin{theorem}\label{iqvfracat} Suppose that assumptions  \eqref{L}, \eqref{Xp}, \eqref{norm}, \eqref{gelfand} are satisfied and $\bs f\in L^2(Q_T)^m$, $\bs u_0\in\K_{G(\bs u_0)}$. Assume, in addition that	$G:\HHH\rightarrow L^1(Q_T)$ is a nonlinear continuous functional whose restriction to $\V_p$ is compact with values in $\C\big([0,T];L^\infty(\Omega)\big)$ and  $G(\HHH)\subset L_\nu^\infty(Q_T)$ for some $\nu>0$.
	
Then the quasi-variational inequality \eqref{iqve} has a weak solution 
$$\bs u\in\V_p\cap L^\infty\big(0,T;L^2(\Omega)^m\big).$$
\end{theorem}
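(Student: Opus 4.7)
The plan is to apply a Schauder-type fixed-point theorem to the solution operator $S:\V_p\to\V_p$ that sends $\bs v$ to the unique weak solution $\bs u$ of the variational inequality \eqref{iv_weak} with $g=G[\bs v]$. For each $\bs v\in\V_p$, the hypothesis on $G$ places $G[\bs v]$ in $\C([0,T];L^\infty_\nu(\Omega))$, and Theorem~\ref{teoIQV} produces $\bs u=S(\bs v)\in\V_p\cap\C([0,T];L^2(\Omega)^m)$; a fixed point of $S$ is, by construction, a weak solution of \eqref{iqve}.

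A priori estimates come in two parts. From an approximation by strong solutions, the usual energy identity combined with Gronwall's lemma yields
$$\|\bs u\|_{L^\infty(0,T;L^2(\Omega)^m)}^2+\delta\,\|\bs u\|_{\V_p}^p\le R_0,$$
where $R_0$ depends only on $T$, $\|\bs f\|_{L^2(Q_T)^m}$ and $\|\bs u_0\|_{L^2(\Omega)^m}$. The pointwise constraint $|\text{L}\bs u|\le G[\bs v]$ combined with Poincar\'e's inequality \eqref{poincare} gives
$$\|\bs u\|_{L^\infty(0,T;\X_p)}\le |\Omega|^{1/p}\,\|G[\bs v]\|_{L^\infty(Q_T)},$$
which stays uniformly bounded on $\V_p$-bounded sets thanks to the compactness of $G|_{\V_p}$. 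Together these estimates identify a convex bounded set $\mathcal{F}\subset\V_p$ with $S(\mathcal{F})\subset\mathcal{F}$. Continuity of $S$ seen as a map $\mathcal{F}\subset\HHH\to\HHH$ follows from Theorem~\ref{cdde}: if $\bs v_n\to\bs v$ in $\HHH$ with $\{\bs v_n\}\subset\mathcal{F}$, the compactness of $G|_{\V_p}$ into $\C([0,T];L^\infty(\Omega))$ forces, along a subsequence, $G[\bs v_n]\to G[\bs v]$ uniformly with limit $\ge\nu$, so that the continuous-dependence estimate \eqref{stabilityef} gives $S(\bs v_n)\to S(\bs v)$ in $\C([0,T];L^2(\Omega)^m)$, hence in $\HHH$. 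Coupled with an Aubin--Lions compactness argument on the $\V_p$-bound of $S(\mathcal{F})$, this renders $S(\mathcal{F})$ relatively compact in $\HHH$, and Schauder's theorem supplies the sought fixed point.

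The main obstacle is that weak solutions of \eqref{iv_weak} do not come a priori with a bound on $\partial_t\bs u$ in $\V_p'$, which is needed both for Aubin--Lions compactness in $\HHH$ and to justify the energy estimate above. I would resolve this by a regularisation-and-monotonicity procedure: for each $\bs v_n$, mollify $g_n=G[\bs v_n]$ in time to $g_n^\eps\in W^{1,\infty}(0,T;L^\infty(\Omega))$, apply the second half of Theorem~\ref{teoIQV} to produce strong solutions $\bs u_n^\eps\in\V_p\cap H^1(0,T;L^2(\Omega)^m)$, derive all bounds uniformly in $\eps$, and pass to the limit $\eps\to 0$ using the strong monotonicity \eqref{monotone} of $\L_p$ via a Minty-type argument---exactly the compactness-and-monotonicity scheme the section title advertises. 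A secondary difficulty arises in the degenerate case $\delta=0$, where the $\V_p$-bound on $S(\bs v)$ is inherited purely from the constraint $|\text{L}S(\bs v)|\le G[\bs v]$; here the boundedness of $G|_{\mathcal{F}}$ in $\C([0,T];L^\infty(\Omega))$ supplied by the compactness hypothesis is precisely what closes the Schauder step.
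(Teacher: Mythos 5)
Your strategy --- a Schauder fixed point applied directly to the variational solution map $\bs v\mapsto S(\bs f,G[\bs v],\bs u_0)$ --- is genuinely different from the paper's, which first penalises the constraint (an unconstrained parabolic equation with a penalty term $k_\eps\big(|\text{L}\bs u|-G[\bs\varphi]\big)$), runs the fixed-point argument at the level of the penalised problems, and only then passes to the limit $\eps\to0$ using an $L^1$ bound on the penalty, a regularising lemma for elements of $\K_{G[\bs w]}$, and pseudo-monotonicity. The reason the paper does not take your shorter route is a compatibility obstruction that your proposal never addresses: Theorem~\ref{teoIQV} produces a (weak or strong) solution of \eqref{iv_weak} with initial datum $\bs u_0$ only under the hypothesis $\bs u_0\in\K_{g(0)}$, and Theorem~\ref{cdde} likewise requires $\bs u_0\in\K_{g_i(0)}$ for \emph{both} thresholds being compared. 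With $g=G[\bs v]$ for an arbitrary $\bs v$ in your invariant set $\mathcal F$, the condition $|\text{L}\bs u_0|\le G[\bs v](0)$ has no reason to hold --- the standing assumption is only $\bs u_0\in\K_{G[\bs u_0]}$ --- so $S$ is not well defined on $\mathcal F$, and the continuous-dependence estimate \eqref{stabilityef}, on which both your continuity and your compactness arguments rest, is not applicable. This is precisely what the penalisation circumvents: the approximating problems \eqref{app} are equations, solvable for every $\bs\varphi\in\HHH$ with no compatibility required between $\bs u_0$ and $G[\bs\varphi]$, and the constraint is only recovered after the fixed point has been found.

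A second, lesser, difficulty is the compactness step. Mollifying $G[\bs v_n]$ in time to $g_n^\eps\in W^{1,\infty}\big(0,T;L^\infty(\Omega)\big)$ does give strong solutions in $H^1\big(0,T;L^2(\Omega)^m\big)$, but the resulting bound on $\partial_t\bs u_n^\eps$ depends on $\|\partial_t g_n^\eps\|_{L^\infty}$ and degenerates as $\eps\to0$, so it cannot feed a uniform Aubin--Lions argument. (Were the compatibility issue resolved, you would not need Aubin--Lions at all: \eqref{stabilityef} shows that $S$ maps sequences with $G[\bs v_n]$ convergent in $L^\infty(Q_T)$ to Cauchy sequences in $\C\big([0,T];L^2(\Omega)^m\big)$, which together with the compactness of $G|_{\V_p}$ already makes $S$ compact on $\HHH$.) Finally, the degenerate case $\delta=0$ is not settled by your remark that the $\V_p$ bound comes from the constraint: in the paper it requires a diagonal limit in $(\eps,\delta)$, because the monotonicity argument identifying the limit of the principal part uses $\delta>0$.
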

\begin{proof} We give a brief idea of the proof. The details can be found, in a more general setting, in \cite{MirandaRodriguesSantos2018}.
	
Assuming first $\delta>0$, we consider the following family of approximating problems, defined for  fixed $\bs\varphi\in\HHH$, such that $\bs u_0\in\K_{G[\bs\varphi(0)]}$: 
to find $\bs u_{\varepsilon,\bs \varphi}$ such that $\bs u_{\varepsilon,\bs\varphi}(0) =\bs u_0$ and
\begin{multline}\label{app}
\langle\partial_t\bs u_{\varepsilon ,\bs\varphi}(t),\bs\psi\rangle_p
+\int_\Omega\big(\delta+ k_\varepsilon\big(|L \bs u_{\varepsilon ,\bs\varphi}(t)| -G[\bs\varphi](t)\big)\big) \L_p \bs u_{\varepsilon ,\bs\varphi}(t)\cdot\text{L}\bs\psi\\
=\int_\Omega\bs f(t)\cdot\bs\psi,\qquad\forall \bs\psi\in\X_p,\quad\text{for a.e. }t\in(0,T),
\end{multline}	
where $k_{\eps}:\R\to\R$ is an increasing continuous function such that
\begin{equation*}\label{kapa_eps}
k_\varepsilon(s) = 0 \ \ \text{ if } s\leq0,\qquad  k_\varepsilon(s)=e^\frac{s}{\varepsilon}-1\ \ \text{ if }0\leq s\leq \tfrac1\varepsilon,\qquad k_\varepsilon(s)=e^\frac{1}{\varepsilon^2}-1\ \ \text{ if } s\geq\tfrac1\varepsilon.
\end{equation*}
This problem has a unique solution $\bs u_{\varepsilon,\bs\varphi}\in \V_p$, with $\partial_t\bs u_{\varepsilon,\bs\varphi}\in\V_p'$.
Let  $S:\HHH\rightarrow\Y_p$ be the mapping that assigns to each $\bs \varphi\in \HHH$ the unique solution  $\bs u_{\varepsilon,\bs\varphi}$ of problem \eqref{app}. Considering the embedding $i:\Y_p\rightarrow\HHH$, then  $i\circ S$ is continuous, compact and we have a priori estimates which assures that there exists a positive $R$, independent of $\eps$, such that
$i\circ S(\HHH)\subset D_{R},$
where $D_{R}=\big\{\bs w\in\HHH:\|\bs w\|_\HHH\le R\big\}$. By Schauder's fixed point theorem, $i\circ S$ has a fixed point $\bs u_{\eps}$, which solves problem \eqref{app} with $\bs\varphi$ replaced by $\bs u_{\eps}$.

The sequence $\{\boldsymbol u_\eps\}_{_\eps}$ satisfies a priori estimates which allow us to obtain the limit $\boldsymbol u$ for subsequences in $\V_p\cap L^\infty\big(0,T;L^2(\Omega)^m\big)$. Another main estimate
$$
\|k_\eps(|L\bs u_{\eps}|-G[\bs u_{\eps}])\|_{L^1(Q_T)}\le C,$$
with $C$ a constant independent of $\eps$, yields $\boldsymbol u\in K_{G[\boldsymbol u]}$. 

Using $\bs u_\eps-\bs v$ as test function in \eqref{app} corresponding to a fixed point $\bs\varphi=\bs u_\eps$, with an arbitrary $\bs v\in\V_p\cap\K_{G[\bs u]}$, we obtain, after integration in $t\in(0,T)$ and setting $k_\eps=k_\eps\big(|\text{L}\bs u_\eps|-G[\bs u_\eps]\big)$:
\begin{multline}\label{solfrac}
\delta\int_{Q_T}\L_p\bs u_\eps\cdot\text{L}(\bs u_\eps-\bs v)\le\frac12\int_{\Omega}|\bs u_0-\bs v(0)|^2+\int_0^T\langle\partial_t\bs v,\bs v-\bs u_\eps\rangle_p\\
+\int_{Q_T}k_\eps\L_p\bs u_\eps\cdot\text{L}(\bs u_\eps-\bs v)-\int_{Q_T}\bs f\cdot(\bs v-\bs u_\eps).
\end{multline}

The passage to the limit $\bs u_\eps\underset{\eps\rightarrow0}{\lraup}\bs u$ in order to conclude that \eqref{iqve} holds for $\bs u$ is delicate and requires a new lemma, which proof can be found in \cite{MirandaRodriguesSantos2018}: given $\bs w\in\V_p$ such that $\bs w\in\K_{G[\bs w]}$ and $\bs z\in\K_{G[\bs w(0)]}$, we may construct a regularizing sequence $\{\bs w_n\}_n$ and a sequence of scalar functions $\{G_n\}_n$ satisfying  i) $\bs w_n\in L^\infty(0,T;\X_p)$ and $\partial_t\bs w_n\in L^\infty(0,T;\X_p)$,
ii) $\bs w_n\underset{n}{\longrightarrow}\bs w$ in $\V_p$ strongly,
iii) $\varlimsup_n\int_0^T\langle\partial_t\bs w_{ n}, \bs w_{ n}-\bs w\rangle_p\le0$ and
iv) $|L\bs w_n|\le G_n$, where $G_n\in\C\big([0,T];L^\infty(\Omega)\big)$ and $G_n\underset{n}{\longrightarrow}G[\bs w]$ in $\C\big([0,T];L^\infty(\Omega)\big)$.

If $\{\bs u_n\}_n$ is a regularizing sequence associated to $\bs u$ and $G[\bs u]$ then there exists a constant $C$ independent of $\eps$ and $n$ such that
\begin{multline*}\int_{Q_T}\!\!k_\eps\L_p\bs u_\eps \cdot\text{L}(\bs u _n-\bs u_\eps )\\
\le \int_{Q_T}\!\!k_\eps|\text{L}\bs u_\eps |^{p-1}\big(|\text{L}\bs u_n|-|\text{L}\bs u_\eps |\big)\le C\|G_n-G[\bs u_\eps ]\|_{L^\infty(Q_T)}\underset{n}{\longrightarrow}0,
\end{multline*}
by the compactness of the operator $G$.
For all $n\in\N$ we have, setting $\bs v=\bs u_\eps$,
\begin{multline*}\int_{Q_T}\delta\L_p\bs u_\eps\cdot\text{L}(\bs u_\eps-\bs u_n)\le\int_0^T\langle\partial_t\bs u_n ,\bs u_n-\bs u\rangle_p\\
+\int_{Q_T}\delta\L_p\bs u_\eps \cdot\text{L}(\bs u_n -\bs u)+\int_{Q_T}\!\!k_\eps\L_p\bs u_\eps \cdot\text{L}(\bs u _n -\bs u_\eps )-\int_{Q_T}\bs f\cdot(\bs u_n -\bs u),
\end{multline*}
concluding that
$$\varlimsup_{\eps\rightarrow0}\int_{Q_T}\delta\L_p\bs u_\eps \cdot\text{L}(\bs u_\eps -\bs u)\le0.$$
This operator is bounded, monotone and hemicontinuous and so it is pseudo-monotone and we get, using \eqref{solfrac},
$$\int_{Q_T}\delta\L_p\bs u\cdot\text{L}( \bs u-\bs v)\le\varliminf_{\eps\rightarrow0}\int_{Q_T}\delta\L_p\bs u_\eps \cdot\text{L}(\bs u_\eps -\bs v),\quad\forall\bs v\in \K_{G[\bs u]}$$
and the proof that $\bs u$ solves the quasi-variational inequality \eqref{iqve} is now easy, by using the well-known monotonicity methods (see \cite{Brezis1968} or \cite{ Lions1969}).

The proof for the case $\delta=0$ is more delicate and requires  taking the limit of diagonal subsequences of solutions $\{(\eps,\delta)\}_{_{\eps,\delta}}$ of \eqref{app} as $\eps\rightarrow0$ and as $\delta\rightarrow0$, in order to use the monotonicity methods to obtain a solution of \eqref{solfrac} in the degenerate case.
\qed\end{proof}

\begin{remark}
Two general examples for the compact operator $G:\V_p\rightarrow\C\big([0,T];L^\infty_\nu(\Omega)\big)$ in the form $G[\bs v]=g(x,t,\zeta(\bs v(x,t)))$, with $g\in\C(\overline Q_T\times\R^m)$, $g\ge\nu>0$, were given in \cite{MirandaRodriguesSantos2018}, namely with 
$$\zeta(\bs v)(x,t)=\int_0^t\bs v(x,s)K(t,s)ds,\qquad (x,t)\in\overline  Q_T,$$
with $K$, $\partial_t K\in L^\infty\big((0,T)\times(0,T)\big)$, or with $\zeta=\zeta(\bs v)$ given by the unique solution of the Cauchy-Dirichlet problem of a quasilinear parabolic scalar equation $\partial_t\zeta-\nabla\cdot a\big(x,t,\nabla\zeta)\big)=\varphi_0+\bs\psi\cdot\bs v+\bs\eta\cdot \text{L}\bs v\in L^p(Q_T)$, which has solutions in the H\"older space $\C^\lambda(\overline Q_T)$, for some $0<\lambda<1$, provided that $\bs v\in\V_p$, $p>\frac{d+2}d$ and $\varphi_0\in L^p(Q_T)$, $\bs\psi\in L^\infty(Q_T)^m$, $\bs\eta\in L^\infty(Q_T)^\ell$ are given.
\end{remark}
	
\begin{remark} Using the sub-differential analysis in Hilbert spaces, Kenmochi and co-workers have also obtained existence results in \cite{Ken2013} and \cite{KenmochiNiezgodka2016}  for evolutionary quasi-variational inequalities with gradient constraints under different assumptions.
\end{remark}

\subsection{The quasi-variational solution via contraction}

For the evolutionary quasi-variational inequalities and for nonlocal Lipschitz nonlinearities we can apply the Banach fixed point theorem in two different functional settings obtaining weak and strong solutions under certain conditions.

Let  $E$ be $L^2(Q_T)^m$ or $\V_p$ and
$$D_R=\{\bs v\in E:\|\bs v\|_E\le R\}.$$

For $\eta, M, \Gamma:\R\rightarrow\R^+$  increasing functions, let $\gamma:E\rightarrow\R^+$  be a functional satisfying\vspace{-3mm}
\begin{align}\label{gamma}
\nonumber	& 0<\eta(R_*)\le\gamma(\bs u)\le M(R_*)\quad \forall\,\bs u\in D_{R_*},\\
&|\gamma(\bs u_1)-\gamma(\bs u_2)|\le\Gamma(R_*)\|u_1-u_2\|_{E}\quad\ \forall\,\bs  u_1,\bs  u_2\in  D_{R_*},
\end{align}
for a sufficiently large  $R_*\in\R^+$.

\begin{theorem} For $p>1$ and $\delta\ge0$, suppose that the assumptions \eqref{L},  \eqref{Xp}, \eqref{norm} and \eqref{gelfand} are satisfied, $\bs f\in L^2(Q_T)^m$, 
$$G[\bs u](x,t)=\gamma(\bs u)\varphi(x,t), \quad (x,t)\in Q_T,$$
where $E=L^2(Q_T)^m$ and
$\gamma$ is a functional satisfying \eqref{gamma},  $\varphi\in\C\big([0,T];L^\infty_\nu(\Omega)\big)$, $\bs u_0\in\K_{G[\bs u_0]}$ and
\begin{equation}
\label{r*}
R_*=\sqrt{T+T^2e^T}\big(\|\bs f\|_{L^2(Q_T)^m}+\|\bs u_0\|_{L^2(\Omega)^m}\big).
\end{equation}
	
If
$$2\,R_*\Gamma(R_*)<\eta(R_*)$$
then the quasi-variational inequality \eqref{iqve}  has a unique weak  solution $\bs u\in \V_p\cap\C\big([0,T];L^2(\Omega)^m\big)$, which is also a strong solution $\bs u\in\V_p\cap H^1\big(0,T;L^2(\Omega)^m\big)$, provided $\varphi\in W^{1,\infty}(0,T;L^\infty(\Omega))$ with $\varphi\ge\nu>0$.
\end{theorem}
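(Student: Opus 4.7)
The strategy is to transpose the Banach fixed-point argument of Theorem~\ref{unistat} to the evolutionary setting, now exploiting the \emph{Lipschitz} dependence on $(\bs f, \bs u_0)$ encoded in the continuous-dependence estimate~\eqref{stabilityef} of Theorem~\ref{cdde}. I would set $E = L^2(Q_T)^m$ and introduce the solution map $S: D_{R_*} \to E$ defined by $\bs v \mapsto \bs u = S(\bs v)$, where $\bs u \in \V_p \cap \C([0,T]; L^2(\Omega)^m)$ is the unique weak solution of the variational inequality~\eqref{iv_weak} with threshold $g = G[\bs v] = \gamma(\bs v)\varphi$; this exists by Theorem~\ref{teoIQV}, since $\gamma(\bs v)\varphi \in \C([0,T]; L^\infty_{\eta(R_*)\nu}(\Omega))$.

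As a preliminary step, testing~\eqref{iv_weak} with $\bs w \equiv \bs 0$ (admissible because $G[\bs v] \geq \eta(R_*)\nu > 0$) and applying the standard energy identity together with Gronwall's inequality, I would obtain the a~priori bound $\|\bs u\|^2_{L^\infty(0,T; L^2(\Omega)^m)} \leq (1 + Te^T)\bigl(\|\bs f\|^2_{L^2(Q_T)^m} + \|\bs u_0\|^2_{L^2(\Omega)^m}\bigr)$; hence $\|\bs u\|^2_E \leq T\|\bs u\|^2_{L^\infty(0,T;L^2(\Omega)^m)} \leq R_*^2$ with $R_*$ as in~\eqref{r*}, so that $S(D_{R_*}) \subset D_{R_*}$.

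The crux is the contraction property. Given $\bs v_1, \bs v_2 \in D_{R_*}$, I would put $\bs u_i = S(\bs v_i)$, $g_i = \gamma(\bs v_i)\varphi$, and $\mu = \gamma(\bs v_2)/\gamma(\bs v_1) \geq 1$ (without loss of generality). Since $\K_g$ is positively homogeneous in $g$, $\mu\bs u_1 \in \K_{g_2}$; moreover, testing the VI for $\bs u_1$ with $\bs w/\mu \in \K_{g_1}$ for arbitrary $\bs w \in \K_{g_2}$ shows that $\mu\bs u_1$ solves~\eqref{iv_weak} with data $(\mu\bs f, \mu\bs u_0, g_2)$ (at least when $p=2$ or $\delta=0$, so the rescaling leaves the $\L_p$-term invariant). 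Since $\bs u_2$ solves the VI with the same threshold $g_2$, the Lipschitz-in-$(\bs f, \bs u_0)$ part of~\eqref{stabilityef} yields
\begin{equation*}
\|\mu\bs u_1 - \bs u_2\|_{L^\infty(0,T;L^2(\Omega)^m)}^2 \leq (1+Te^T)(\mu-1)^2\bigl(\|\bs f\|^2_{L^2(Q_T)^m} + \|\bs u_0\|^2_{L^2(\Omega)^m}\bigr).
\end{equation*}
Combined with $\|\bs u_1 - \mu\bs u_1\|_E = (\mu-1)\|\bs u_1\|_E \leq (\mu-1) R_*$ via the triangle inequality and the explicit form of $R_*$ in~\eqref{r*}, this gives $\|\bs u_1 - \bs u_2\|_E \leq 2R_*(\mu-1)$. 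Assumption~\eqref{gamma} then provides $|\mu-1| \leq \Gamma(R_*)/\eta(R_*) \|\bs v_1 - \bs v_2\|_E$, producing the contraction factor $2R_*\Gamma(R_*)/\eta(R_*) < 1$. Banach's theorem supplies a unique fixed point $\bs u = S(\bs u)$, which is the unique weak QVI solution. The strong statement follows at once from Theorem~\ref{teoIQV}, since $\gamma(\bs u)\varphi \in W^{1,\infty}(0,T;L^\infty(\Omega))$ whenever $\varphi$ is.

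The main technical obstacle is the homogeneity step: the rescaling $\bs u \mapsto \mu\bs u$ preserves the VI structure (up to scaling of $\bs f$ and $\bs u_0$) cleanly only for $p=2$, where $\L_2 = L$ is linear, or for $\delta=0$, where the $\L_p$-term is absent. For $p\neq 2$ and $\delta>0$, the rescaling transforms $\delta$ into $\delta\mu^{2-p}$, and one would have to absorb this perturbation---either by exploiting the strong monotonicity~\eqref{monotone} of $\L_p$ in a direct energy estimate comparing $\bs u_1$ and $\bs u_2$, or by establishing a sharper Lipschitz-in-$g$ dependence estimate to replace the H\"older-$\tfrac12$ bound currently furnished by~\eqref{stabilityef}.
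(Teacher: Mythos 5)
Your argument coincides with the paper's own proof: the same solution map $S$ on the ball $D_{R_*}\subset L^2(Q_T)^m$, the same invariance $S(D_{R_*})\subseteq D_{R_*}$ obtained from \eqref{stabilityef} with zero data, and the same contraction estimate obtained by splitting $\|\bs u_1-\bs u_2\|_E\le\|\bs u_1-\mu\bs u_1\|_E+\|\mu\bs u_1-\bs u_2\|_E$ with $\mu=\gamma(\bs v_2)/\gamma(\bs v_1)$ and invoking the Lipschitz dependence on $(\bs f,\bs u_0)$ in \eqref{stabilityef}, legitimate because $\mu\bs u_1$ and $\bs u_2$ share the same constraint $\mu g$. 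The obstacle you flag at the end is genuine, and it is present --- silently --- in the paper's proof as well: the identity $\mu\bs u_1=S(\mu\bs f,\mu g,\mu\bs u_0)$ is exact only for $p=2$ (where $\L_2=\text{L}$ is positively homogeneous of degree one) or for $\delta=0$; otherwise $\mu\bs u_1$ solves the inequality with the rescaled operator $\delta\mu^{2-p}\L_p$, and comparing it with $\bs u_2$ leaves an extra cross term of order $\mu^{2-p}-1=O(\mu-1)$ in the energy inequality for the \emph{squared} norm, so a naive treatment only yields a H\"older-$\frac12$ modulus, which does not give a contraction. This is exactly the term that the paper controls explicitly in the $\V_p$-setting of Theorem~\ref{iqvecont} (the additional summand in the constant $\rho$, handled there for $1<p\le2$ and $\delta>0$ using the strong monotonicity \eqref{monotone} and the inverse H\"older inequality). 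So your proof, like the paper's, is complete as written for $p=2$ or $\delta=0$, and the remedy you sketch --- a direct monotonicity estimate absorbing the $\mu^{2-p}$ perturbation --- is the correct route for the remaining cases.
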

\begin{proof}
For any $R>0$ let
$S:D_R\rightarrow L^2(Q_T)^m$ be the mapping that, by Theorem \ref{teoIQV}, assigns to each $\bs v\in D_R$ the unique solution of the variational inequality \eqref{iv_weak} (respectively \eqref{ive}) with data $\bs f$, $G[\bs v]$ and $\bs u_0$. Denoting $\bs u=S(\bs v)=S(\bs f, G[\bs v],\bs u_0)$, using the stability result \eqref{stabilityef} with $\bs u_1=\bs u$ and $\bs u_2=0$ we have the estimate
\begin{multline}\label{est_r*}\|\bs u\|_{L^2(Q_T)^m}\le\sqrt T\|\bs u\|_{L^\infty(0,T;L^2(\Omega))}\\
\le \sqrt{T+T^2e^T}\big(\|\bs f\|_{L^2(Q_T)^m}+\|\bs u_0\|_{L^2(\Omega)^m}\big)=R_*,
\end{multline}
being $R_*$ fixed from now on.
For this choice of $R_*$ we have $S(D_{R_*})\subseteq D_{R_*}$.
	
For $\bs v_i\in D_{R_*}$, $i=1,2$ and $\bs u_i=S(\bs f,G[\bs v_i],\bs u_0)$, set $\mu=\frac{\gamma(\bs v_2)}{\gamma(\bs v_1)}$ which we may assume to be greater than 1.  Denoting $g=G[\bs v_1]=\gamma(\bs v_1)\varphi$, then $\mu u_1=S(\mu\bs f,\mu g,\mu\bs u_0)$, $\bs u_2=S(\bs f,\mu g,\bs u_0)$ and, using \eqref{stabilityef}, we have
\begin{align*}
\nonumber	\|S(\bs v_1)-S(\bs v_2)\|_{L^2(Q_T)^m}&\le \|\bs u_1-\mu\bs u_1\|_{L^2(Q_T)^m}+\|\mu\bs u_1-\bs u_2\|_{L^2(Q_T)^m}\\
\nonumber	&\le (\mu-1)\|\bs u_1\|_{L^2(Q_T)^m}+(\mu-1)R_*\le 2(\mu-1)R_*.
\end{align*}
But
$$\mu-1=\frac{\gamma(\bs v_2)-\gamma(\bs v_1)}{\gamma(\bs v_1)}\le \frac{\Gamma(R_*)}{\eta(R_*)}\|\bs v_1-\bs v_2\|_{L^2(Q_T)^m}$$
and consequently $S$ is a contraction as long as
	$$\frac{2\,R_*\Gamma(R_*)}{\eta(R_*)}<1.$$
\end{proof}

\begin{remark} These results are new. In particular, the one with $\varphi$ more regular gives the existence and uniqueness of the strong solution $\bs u\in \V_p\cap H^1\big(0,T;L^2(\Omega)^m\big)$ to the quasi-variational inequality \eqref{iqve} and therefore also satisfies $\bs u(t)\in\K_{G[\bs u(t)]}$ and \eqref{ive} with $g=G[\bs u(t)]$,
$$\int_{\Omega}\partial_t\bs u(t)\cdot(\bs w-\bs u(t))+\delta\int_\Omega\L_p\bs u(t)\cdot \text{L}(\bs w-\bs u(t))
\ge\int_{\Omega}\bs f(t)\cdot(\bs w-\bs u(t)),$$ for all $\bs w\in \K_{G[\bs u](t)}$, a.e. $t\in(0,T)$.
\end{remark}

\begin{theorem}\label{iqvecont}
For $1<p\le2$ and $\delta>0$, suppose that the assumptions \eqref{L},  \eqref{Xp}, \eqref{norm} and \eqref{gelfand} are satisfied, $\bs f\in L^2(Q_T)^m$, 
$$G[\bs u](x,t)=\gamma(\bs u)\varphi(x,t),\quad (x,t)\in Q_T,$$
where $E=\V_p$, $\gamma$ is a functional satisfying
\eqref{gamma}, $\varphi\in \C\big([0,T];L^\infty_\nu(\Omega)\big)$  and $\bs u_0\in\K_{G[\bs u_0]}$. Then, the quasi-variational inequality \eqref{iqve} has a unique weak  solution $\bs u\in\V_p\cap\C\big([0,T];L^2(\Omega)^m\big)$, provided that
\begin{equation}\label{Banach_ce}
\rho\,\Gamma(R_p)<\eta(R_p),
\end{equation}
where
$$\rho=2R_p+(2-p)\,\big(2\, M(R_p)\,\|\varphi\|_{L^\infty(Q_T)}\big)^{2-p}|Q_T|^\frac{2-p}{p} (R_p)^{p-1}$$	 
and 
$$R_p=\Big(\tfrac{1+T+T^2e^T}{2\delta}\big(\|\bs  f\|_{L^2(Q_T)^m}^2+\|\bs u_0\|_{L^2(\Omega)^m}^2\big)\Big)^\frac1p,$$	
which is also a strong solution in $\V_p\cap H^1\big(0,T;L^2(\Omega)^m\big)$ if, instead, we have $\varphi\in W^{1,\infty}(0,T;L^\infty(\Omega))$ with $\varphi\ge\nu>0$.
\end{theorem}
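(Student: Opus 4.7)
The plan is to apply the Banach fixed-point theorem to the solution map $S:D_{R_p}\subset\V_p\to\V_p$ that assigns to $\bs v$ the unique weak solution $\bs u=S(\bs v)$ of the variational inequality \eqref{iv_weak} with threshold $g=G[\bs v]=\gamma(\bs v)\varphi$, whose well-posedness in $\V_p\cap\C([0,T];L^2(\Omega)^m)$ is provided by Theorem~\ref{teoIQV}.

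First I would check that $S(D_{R_p})\subseteq D_{R_p}$. Using $\bs w=\bs 0\in\K_g$ as test function in \eqref{iv_weak} yields $\delta\|\bs u\|_{\V_p}^p\le\tfrac{1}{2}\|\bs u_0\|_{L^2(\Omega)^m}^2+\int_{Q_T}\bs f\cdot\bs u$, and the energy-Gronwall argument at the start of the proof of Theorem~\ref{cdde} (applied with one solution set to zero, so that $\Theta\equiv 0$) supplies $\|\bs u\|_{L^\infty(0,T;L^2(\Omega)^m)}^2\le(1+Te^T)\big(\|\bs f\|_{L^2(Q_T)^m}^2+\|\bs u_0\|_{L^2(\Omega)^m}^2\big)$. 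Cauchy-Schwarz on the forcing term, followed by integration in time, then gives
$$\delta\|\bs u\|_{\V_p}^p\le\tfrac{1+T+T^2e^T}{2}\big(\|\bs f\|_{L^2(Q_T)^m}^2+\|\bs u_0\|_{L^2(\Omega)^m}^2\big)=\delta R_p^p,$$
as required.

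The core step is the contraction estimate. Given $\bs v_1,\bs v_2\in D_{R_p}$, set $\bs u_i=S(\bs v_i)$, $g_i=\gamma(\bs v_i)\varphi$ and assume, without loss of generality, $\mu=\gamma(\bs v_2)/\gamma(\bs v_1)\ge 1$, so that $g_2=\mu g_1$. Then $\bs u_2/\mu\in\K_{g_1}$ and $\mu\bs u_1\in\K_{g_2}$ are admissible test functions. Testing the VI for $\bs u_1$ with $\bs u_2/\mu$, multiplying by $\mu$, and subtracting from the VI for $\bs u_2$ tested with $\mu\bs u_1$, one obtains
$$\int_{Q_T}\partial_t(\bs u_2-\bs u_1)\cdot(\mu\bs u_1-\bs u_2)+\delta\int_{Q_T}\big(\L_p\bs u_2-\L_p\bs u_1\big)\cdot\text{L}(\mu\bs u_1-\bs u_2)\ge 0.$$
The decomposition $\mu\bs u_1-\bs u_2=(\mu-1)\bs u_1-(\bs u_2-\bs u_1)$ isolates a coercive diagonal part (bounded below, via the monotonicity inequality \eqref{monotone} and the reverse Hölder inequality using $|\text{L}\bs u_i|\le M(R_p)\|\varphi\|_{L^\infty(Q_T)}$, by $\delta d_p\big(2M(R_p)\|\varphi\|_{L^\infty(Q_T)}\big)^{p-2}|Q_T|^{(p-2)/p}\|\bs u_2-\bs u_1\|_{\V_p}^2$) from cross terms proportional to $(\mu-1)$. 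The shared initial condition $\bs u_1(0)=\bs u_2(0)=\bs u_0$ kills the time-boundary term; combined with $\|\bs u_i\|_{\V_p}\le R_p$ the $\partial_t$-cross term contributes the $2R_p$ piece of $\rho$, while Hölder's inequality on the $\L_p$-cross term together with the elementary bound $\mu^{2-p}-1\le(2-p)(\mu-1)$ (valid for $1<p\le 2$, $\mu\ge 1$) produces the $(2-p)\big(2M(R_p)\|\varphi\|_{L^\infty(Q_T)}\big)^{2-p}|Q_T|^{(2-p)/p}R_p^{p-1}$ piece. Thus $\|\bs u_1-\bs u_2\|_{\V_p}\le\rho(\mu-1)$, and assumption \eqref{gamma} gives $\mu-1\le\Gamma(R_p)/\eta(R_p)\,\|\bs v_1-\bs v_2\|_{\V_p}$, so that condition \eqref{Banach_ce} makes $S$ a strict contraction on $D_{R_p}$.

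Banach's theorem then produces the unique fixed point $\bs u\in D_{R_p}$, which is the unique weak solution of \eqref{iqve}. When additionally $\varphi\in W^{1,\infty}(0,T;L^\infty(\Omega))$, at the fixed point $\gamma(\bs u)\in\R$ is a positive scalar, so $g=\gamma(\bs u)\varphi\in W^{1,\infty}(0,T;L^\infty(\Omega))$ and Theorem~\ref{teoIQV} upgrades $\bs u$ to a strong solution in $\V_p\cap H^1(0,T;L^2(\Omega)^m)$. The main obstacle is the contraction estimate itself: unlike the stationary case of Theorem~\ref{unistat}, the scaling $\bs u_1\mapsto\mu\bs u_1$ does not map VI-solutions to VI-solutions because $\partial_t$ scales as $\mu$ while $\L_p$ scales as $\mu^{p-1}$. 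The $(2-p)$ piece of $\rho$ is precisely the $L^p$-cost of reconciling this mismatch, and extracting it with the sharp constants displayed in the statement forces the use of the full monotonicity \eqref{monotone} combined with reverse Hölder, which is available only because the constraint supplies a uniform bound on $|\text{L}\bs u_i|$.
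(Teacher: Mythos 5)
Your overall strategy (Banach fixed point on $D_{R_p}$, invariance from the a priori estimate with $\bs w=\bs 0$, a Lipschitz bound for $S$ of the form $\|\bs u_1-\bs u_2\|_{\V_p}\le\rho(\mu-1)$, and the final passage through \eqref{gamma}) coincides with the paper's, as does your treatment of the weak/strong dichotomy via the regularity of $\varphi$. The gap is in the core contraction estimate. After cross-testing with $\bs u_2/\mu$ and $\mu\bs u_1$ and decomposing $\mu\bs u_1-\bs u_2=(\mu-1)\bs u_1-(\bs u_2-\bs u_1)$, the time-derivative cross term you must control is $(\mu-1)\int_{Q_T}\partial_t(\bs u_2-\bs u_1)\cdot\bs u_1$. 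The bound $\|\bs u_i\|_{\V_p}\le R_p$ does not control this quantity: pairing $\partial_t(\bs u_2-\bs u_1)$ with $\bs u_1$ would require either $\partial_t\bs u_i\in L^2(Q_T)^m$ with a quantitative bound (unavailable for weak solutions when $\varphi$ is merely continuous in $t$, and for strong solutions dependent on $\|\partial_t\varphi\|$, a quantity absent from $\rho$), or a duality bound $\|\partial_t(\bs u_2-\bs u_1)\|_{\V_p'}\lesssim\|\bs u_1-\bs u_2\|_{\V_p}$, which is not available either; there is no way to extract the factor $2R_p\,\|\bs u_1-\bs u_2\|_{\V_p}$ you claim. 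Moreover, in your decomposition the cross terms carry the coefficient $(\mu-1)$, not $(\mu^{2-p}-1)$, so the inequality $\mu^{2-p}-1\le(2-p)(\mu-1)$ you invoke to produce the $(2-p)$-piece of $\rho$ never actually arises in your computation: that factor is the signature of a rescaled operator, which your cross-testing does not introduce.

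The paper avoids both problems by a three-term triangle inequality through rescaled solutions. Writing $g=\gamma(\bs v_1)\varphi$, it uses the homogeneity $\mu\bs u_1=S(\mu^{2-p}\L_p,\mu\bs f,\mu g,\mu\bs u_0)=:\bs z_1$ and sets $\bs z_2=S(\L_p,\mu\bs f,\mu g,\mu\bs u_0)$. Then $\|\bs u_1-\bs z_1\|_{\V_p}=(\mu-1)\|\bs u_1\|_{\V_p}\le(\mu-1)R_p$ is immediate; $\|\bs z_2-\bs u_2\|_{\V_p}$ is controlled by the continuous dependence on $(\bs f,\bs u_0)$ from Theorem \ref{cdde} (same operator, same convex set $\K_{\mu g}$); and in the middle comparison both $\bs z_1$ and $\bs z_2$ lie in $\K_{\mu g}$ and share the initial datum $\mu\bs u_0$, so cross-testing there produces only the good-signed term $\frac12\int_\Omega|\bs z_1(t)-\bs z_2(t)|^2$ and the operator-difference term $(\mu^{2-p}-1)\int_{Q_T}\L_p\bs z_1\cdot\text{L}(\bs z_1-\bs z_2)$ --- with no time-derivative cross term at all. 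This is where $\mu^{2-p}-1\le(2-p)(\mu-1)$ and the reverse H\"older inequality legitimately yield the second piece of $\rho$. You correctly identified the scaling mismatch between $\partial_t$ and $\L_p$ as the crux, but the resolution is to absorb it into the operator (comparing $\mu^{2-p}\L_p$ with $\L_p$ over the same convex set) rather than into a direct cross-testing of $\bs u_1$ against $\bs u_2$.
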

\begin{proof} For $R>0$ let  $S:D_R\rightarrow\V_p$ be defined by $\bs u=S(\bs v)=S(\L_p, \bs f, g,\bs u_0)$, the unique strong solution of the variational inequality \eqref{ive}, with the operator $\L_p$ and data $(\bs f,g,\bs u_0)$, where $g=G[\bs v]$. Taking $\bs w=\bs0$ in \eqref{ive} and using the estimate \eqref{est_r*} we have the a priori estimate
\begin{align*}
2\,\delta\|\bs u\|_{\V_p}^p&\le\big(\|\bs  f\|_{L^2(Q_t)^m}^2+\|\bs u\|^2_{L^2(Q_T)^m}+\|\bs u_0\|_{L^2(\Omega)^m}^2\big)\\
&\le(1+T+T^2e^T)\big(\|\bs  f\|_{L^2(Q_t)}^2+\|\bs u_0\|_{L^2(\Omega)}^2\big)
\end{align*}
and therefore
\begin{equation}
\label{raio}
\|\bs u\|_{\V_p}\le\Big(\tfrac{1+T+T^2e^T}{2\delta}\big(\|\bs  f\|_{L^2(Q_T)^m}^2+\|\bs u_0\|_{L^2(\Omega)^m}^2\big)\Big)^\frac1p=R_p.
\end{equation}
	
Given $\bs v_i\in D_{R_p}$, $i=1,2$, let $\bs u_i=S(\L_p,\bs f,\gamma(\bs v_i)\varphi,\bs u_0)$ and set  $\mu=\frac{\gamma(\bs v_2)}{\gamma(\bs v_1)}$, assuming $\mu>1$ . 
	
Setting $g=G[\bs v_1]=\gamma(\bs v_1)\varphi$, observe that   $\mu\bs u_1=S(\mu^{2-p}\L_p,\mu\bs f,\mu g,\mu \bs u_0)=\bs z_1$ and $\bs z_2=S(\L_p,\mu\bs f,\mu g,\mu \bs u_0)$, we get
\begin{equation*}
\|\bs u_1-\bs u_2\|_{\V_p}\le \|\bs u_1-\bs z_1\|_{\V_p}+\|\bs z_1-\bs z_2\|_{\V_p}+\|\bs z_2-\bs u_2\|_{\V_p}.
\end{equation*}
By \eqref{raio} and the continuous dependence result \eqref{stabilityeff},
\begin{equation} \label{z1z2antesantes}
\|\bs u_1-\bs z_1\|_{\V_p}=(\mu-1)\|\bs u_1\|_{\V_p}\le(\mu-1)R_p\ \ \text{ and }\ \ \|\bs z_2-\bs u_2\|_{\V_p}=(\mu-1)R_p.
\end{equation}	
	
Since $\bs z_1,\bs z_2\in\K_{\mu g}$, we can use them as test functions in the variational inequality \eqref{ive} satisfied by the other one. Then
\begin{multline*}
\frac12\int_{\Omega}|\bs z_1(t)-\bs z_2(t)|^2+\int_{Q_T}|\text{L}(\bs z_1-\bs z_2)|^2\big(|\text{L}\bs z_1|+|\text{L}\bs z_2|\big)^{p-2}\\
\le(\mu^{2-p}-1)\int_{Q_T}\text{L}_p\bs z_1\cdot\text{L}(\bs z_1-\bs z_2)
\end{multline*}
and, by the H\"older inverse inequality,
\begin{multline*}
\frac12\int_{\Omega}|\bs z_1(t)-\bs z_2(t)|^2+\|\text{L}(\bs z_1-\bs z_2)\|^2_{L^p(Q_T)^\ell}\Big(\int_{Q_T}\big(|\text{L}\bs z_1|+|\text{L}\bs z_1|\big)^{p}\Big)^{\frac{p-2}{p}}\\
\le(\mu^{2-p}-1)\|\text{L}\bs z_1\|_{L^p(Q_T)^\ell}^{p-1}\|\text{L}(\bs z_1-\bs z_2)\|_{L^p(Q_T)^\ell}.
\end{multline*}
But
$$\Big(\int_{Q_T}\big(|\text{L}\bs z_1|+|\text{L}\bs z_1|\big)^{p}\Big)^{\frac{p-2}{p}}\ge(2\, M(R_p)\,\|\varphi\|_{L^\infty(Q_T)})^{p-2}|Q_T|^\frac{p-2}{p}$$
and $\mu^{2-p}-1\le(2-p)(\mu-1)$, so
\begin{equation}\label{estz1z2}
\|\bs z_1-\bs z_2\|_{\V_p}\le (\mu-1)\,(2-p)\,\big(2\, M(R_p)\,\|\varphi\|_{L^\infty(Q_T)}\big)^{2-p}|Q_T|^\frac{2-p}{p} (R_p)^{p-1}.
\end{equation}
	
From \eqref{z1z2antesantes} and \eqref{estz1z2}, we obtain
\begin{multline}
\|S(\bs v_1)-S(\bs v_2)\|_{\V_p}\le(\mu-1)\Big(2R_p\\
+(2-p)\big(2\, M(R_p)\,\|\varphi\|_{L^\infty(Q_T)}\big)^{2-p}|Q_T|^\frac{2-p}{p} (R_p)^{p-1}\Big).
\end{multline}
	
Defining
$$\rho=2R_p+(2-p)\,\big(2\, M(R_p)\,\|\varphi\|_{L^\infty(Q_T)}\big)^{2-p}|Q_T|^\frac{2-p}{p} (R_p)^{p-1}$$
we get, with $\Gamma=\Gamma(R_p)$ and $\eta=\eta(R_p)$,
$$\|S(\bs v_1)-S(\bs v_2)\|_{\V_p}\le \frac{\rho\,\Gamma}{\eta}\|\bs v_1-\bs v_2\|_{\V_p}$$
and $S$ is a contraction if 
$\rho\,\Gamma<\eta,$ which fixed point $\bs u\in\V_p\cap H^1\big(0,T;L^2(\Omega)^m\big)$ is the strong solution of the quasi-variational inequality.
	
In the case of $\varphi\in\C\big([0,T];L^\infty_\nu(\Omega)\big)$, the solution map $S$ of Theorem~\ref{teoIQV} only gives a weak solution $\bs u\in\V_p\cap\big([0,T];L^2(\Omega)^m\big)$, which is a contraction exactly in the same case as \eqref{raio}. The proof is the same, since the continuous dependence estimate \eqref{estz1z2} still holds for weak solutions of the variational inequality as in Theorem~\ref{cdde}. 
\qed\end{proof}

\begin{remark} These results apply to nonlocal dependences on the derivatives of $\bs u$ as well, since $\varphi$ is Lipschitz continuous on $\V_p$. The part corresponding to weak solutions is new, while  the one for strong solutions extends \cite[Theorem 3.2]{HintermullerRautenberg2017}. This work considers strong solutions in the abstract framework of \cite{Lions1969}, which also include obstacle problems, it is aimed to numerical applications, but requires stronger restrictions on $\varphi$.
\end{remark}
	
\subsection{Applications}

\begin{example} {\bf The dynamics of the sandpile}$ $
	
\noindent Among the continuum models for granular motion, the one proposed by Prigozhin (see \cite{P86}, \cite{Prigozhin1994} and \cite{Pri1996-1}) for the pile surface $u=u(x,t)$, $x\in\Omega\subset\R^2$, growing on a rigid support $u_0=u_0(x)$,  satisfying the repose angle $\alpha$ condition, i.e., the surface slope $|\nabla u|$ cannot exceed $k=\tan\alpha>0$ nor the support slope $|\nabla u_0|$. This leads to the implicit gradient constraint
\begin{equation}\label{KG0}
|\nabla u(x,t)\le G_0[u](x,t)\equiv \begin{cases}
k&\text{ if }u(x,t)>u_0(x)\\
k\vee|\nabla u_0(x)|&\text{ if }u(x,t)\le u_0(x).
\end{cases}
\end{equation}

Following \cite{PrigozhinZaltzman2003}, the pile surface dynamics is related to the thickness $v=v(x,t)$ of a thin surface layer of rolling particles and  may be described by
\begin{equation}\label{sand}
\partial_tu+v\big(1-\tfrac{|\nabla u|^2}{k}\big)\quad\text{and}\quad\eps\partial_tv-\eta\nabla\cdot(v\nabla u)=f-v\big(1-\tfrac{|\nabla u|^2}{k}\big),
\end{equation}
where $\eps\sim0$ is the ratio of the thickness of the rolling grain layer and the pile size, $\eta>0$, is a ratio characterizing the competition between rolling and trapping of the granular material, and $f$ the source intensity, which is positive for the growing pile, but may be zero or negative for taking erosion effects into account. Assuming $v(x,t)=\overline v>0$, from \eqref{sand} we obtain
$$\partial_tu-\delta\Delta u=f\ \text{ if }\ |\nabla u|<G_0,$$
where $\delta=\eta\overline v>0$ may account for a small rolling of sand and hence some surface diffusion below the critical slope, or no surface flow if $\delta=0$.

Assuming an homogeneous boundary condition, which means the sand may fall out of $\partial\Omega$, and the initial condition below the critical slope, i.e., $|\nabla u_0|\le k$, the pile surface $u=u^\delta(x,t)$, $\delta\ge 0$, is the unique solution of the scalar variational inequality \eqref{ive} with $L=\nabla$, $p=2$ and $g(t)\equiv k$, provided we prescribe
$f\in L^2(Q_T)$. We observe that, by comparison of $u_1=u^\delta$ with $\delta>0$ and $u_2=u^0$ with $\delta=0$, as in Theorem~\ref{cdde}, we have the estimate
$$\int_\Omega|u^\delta-u^0|^2(t)\le 2\delta\int_0^t\int_\Omega|\nabla u^\delta||\nabla(u^\delta-u^0)|\le 4\delta k^2|Q_t|,\quad0<t<T.$$

We can also immediately apply for $t\rightarrow\infty$ the asymptotic results of Theorem~\ref{asympttt} for $\delta>0$ and Theorem~\ref{asympt22} for $\delta=0$.

Moreover, if $\delta=0$ in the case of the growing pile with $f(x,t)=f(x)\ge0$ it was observed in \cite{CannarsaCardaliaguetSinestrari2009} not only that, if $t>s>0$
\begin{equation}
\label{stabc}
u_0(x)\le u(x,s)\le u(x,t)\le u_\infty(x)=\lim_{t\rightarrow\infty}u(x,t)\le kd(x),\ x\in\Omega,
\end{equation}
where $d(x)=d(x,\partial\Omega)$ is the distance function to the boundary, but the limit stationary solution is given by
$$u_\infty(x)=u_0(x)\vee u_f(x),\quad x\in\Omega,$$
where $\displaystyle u_f(x)=\max_{y\in\supp f}\big(d(y)-|x-y|\big)^+$, $x\in\Omega$. This model has also a very interesting property of the finite time  stabilization of the sandpile, provided that $f$ is positive in a neighborhood of the ridge $\Sigma$ of $\Omega$, i.e. the set of points $x\in\Omega$ where $d$ is not differentiable (see \cite[Theorem 3.3]{CannarsaCardaliaguetSinestrari2009}): there exists a time $T<\infty$ such that, for any $u_0\in\K_k=\{v\in H^1_0(\Omega):|\nabla v|\le k\}$,
\begin{equation}\label{finite}
u(x,t)=kd(x),\quad\forall t\ge T,
\end{equation}
provided $\exists\, r>0: f(x)\ge r\text{ a.e. }x\in B_r(y)$, for all $y\in\Sigma.$

Similar results were obtained in \cite{RodriguesSantos2015} for the transported sandpile problem, for $u(t)\in \K_k$, such that
\begin{equation}\label{sandb}
\int_\Omega\big(\partial_tu(t)+\bs b\cdot\nabla u(t)-f(t)\big)(v-u(t))\ge0,\quad\text{ a.e. }t\in(0,T),
\end{equation}
for all $v\in\K_k$, with $\bs b\in\R^2$, $\partial\Omega\in\C^2$, $f=f(t)\ge0$ nondecreasing and $f\in L^\infty(0,\infty)$, which also satisfies \eqref{stabc}. Moreover, it was also shown in \cite{RodriguesSantos2015} that $u(t)$ equivalently solves \eqref{sandb} for the double obstacle problem, i.e. with $\K^\wedge_\vee=\{v\in H^1_0(\Omega):-kd(x)\le v(x)\le kd(x),\ x\in\Omega\}$ and, moreover, has also the finite time stabilization property \eqref{finite} under the additional assumptions $\bs b\cdot\nabla u_0\le f(t)$ in $\{x\in\Omega:-kd(x)<u_0(x)\}$ for $t>0$ and $\displaystyle\liminf_{t\rightarrow\infty} f(t)>|\bs b|+2k\|d\|_{L^\infty(\Omega)}$.

It should be noted that if we replace  $\K_k$ by the solution dependent convex set $\K_{G_0[u]}$, with $G_0$ defined in \eqref{KG0}, to solve the corresponding quasi-variational inequality \eqref{sandb}, even with $\bs b\equiv\bs 0$ or with an additional $\delta$-diffusion term is an open problem since the operator $G_0$ is not continuous in $u$. Recently, in \cite{BarrettPrigozhin2013-2}, Barrett and Prigozhin succeeded to construct, by numerical analysis methods, approximate solutions, including numerical examples, that converge to a quasi-variational solution of \eqref{sandb} without transport ($\bs b\equiv\bs0$), for fixed $\eps>0$, with the continuous operator $G_\eps:\C(\overline\Omega)\rightarrow \C(\overline\Omega)$ given by
$$G_\eps[u](x,t)=\begin{cases}
k&\text{ if }u(x)\ge u_\eps(x)+\eps,\vspace{0,5mm}\\
k_\eps(x)+(k-k_\eps(x))\frac{u(x)-u_\eps(x)}{\eps}&\text{ if }u_\eps(x)\le u(x)<u_\eps(x)+\eps,\vspace{0,5mm}\\
k_\eps(x)\equiv k\vee|\nabla u_\eps(x)|&\text{ if }u(x)<u_\eps(x),
\end{cases}$$
where $u_\eps\in\C^1(\overline{\Omega})\cap W^{1,\infty}_0(\Omega)$ is an approximation of the initial condition $u_0\in W^{1,\infty}_0(\Omega)$. We observe that the existence of a quasi-variational solution of \eqref{sandb} with this $G_\eps$ is also guaranteed by Theorem~\ref{arma} or Corollary~\ref{coro}.
\end{example} 
	
\begin{example} {\bf An evolutionary electromagnetic heating problem} \cite{MirandaRodriguesSantos2012}		$ $
	
\noindent We consider now an evolutionary case of the Example 6 for the magnetic field $\bs h=\bs h(x,t)$ of a superconductor, which threshold may depend of a temperature field $\vartheta=\vartheta(x,t)$, $(x,t)\in Q_T$, subjected to a magnetic heating. This leads to the quasi-variational weak formulation
\begin{equation}\label{iqvhh}
\begin{cases}
\bs h\in \K_{j(\vartheta(\bs h))}\subset\V_p,\vspace{0.5mm}\\
\displaystyle\int_0^T\langle\partial_t \bs w,\bs w-\bs h\rangle_p+\delta\int_{Q_T}|\nabla\times\bs h|^{p-2}\nabla\times(\bs w-\bs h)\ge\int_{Q_T}\bs f\cdot(\bs w-\bs h)\vspace{0.5mm}\\
\displaystyle	\qquad\qquad\qquad\hfill{	-\frac12\int_{\Omega}|\bs w(0)-\bs h_0|^2\quad\forall\bs w\in\Y_p\ \text{ such that }\bs w\in\K_{j(\vartheta(\bs h))}}
\end{cases}
\end{equation}
coupled with a Cauchy-Dirichlet problem for the heat equation
\begin{align}\label{eqtheta}
\nonumber &\partial_t\vartheta-\Delta\vartheta=\eta+\bs\zeta\cdot\bs h+\bs\xi\cdot\nabla\times\bs h\quad\text{ in }Q_T,\\
&\vartheta=0\text { on }\partial\Omega\times(0,T),\quad \vartheta(0)=\vartheta_0\ \text{ in }\Omega.
\end{align}
		
Here, for a.e. $t\in(0,T)$, the convex set depends on $\bs h$ trough $\vartheta$ and is given, for some $j=j(x,t,\vartheta)\in\C(\overline Q_T\times\R)$, $j\ge\nu>0$ by
\begin{equation}\label{kapah}
\K_{j(\vartheta(t))}=\big\{\bs w\in\X_p:|\nabla\times\bs w|\le j(\theta(t))\text{ in }\Omega\big\}
\end{equation}
where $\X_p$ is given by \eqref{rot_normal0} or \eqref{rot_tangencial0}.
		
If we give $\vartheta_0\in H^1_0(\Omega)\cap\C^\alpha(\overline{\Omega})$, $ \eta\in L^p(Q_T)$ and $\bs \zeta$, $\bs \xi\in L^\infty(Q_T)^3$, the solution map that, for $p\ge\frac52$, associates to each $\bs h\in\V_p$, the unique solution $\vartheta\in L^2\big(0,T;H^1_0(\Omega)\big)\cap\C^\lambda(\overline Q_T)$, for some $0<\lambda<1$, is continuous and compact as a linear operator from $\V_p$ in $\C(\overline Q_T)$. Therefore, with $\bs f\in L^2(Q_T)^3$ and $\vartheta_0\in\K_{j(\vartheta_0)}$, Theorem \ref{iqvfracat} guarantees the existence of a weak solution $(\bs h,\vartheta)\in\big(\V_p\cap L^\infty\big(0,T;L^2(\Omega)^3)\big)\times \big(L^2\big(0,T;H^1_0(\Omega)\big)\cap\C^\lambda(\overline Q_T)\big)$ to the coupled problem \eqref{iqvhh}-\eqref{eqtheta}.
		
We observe that, if the threshold $j$ is independent of $\vartheta$, the problem becomes variational and admits not only weak but also strong solutions, by Theorem \ref{teoIQV}.
		
However, if we set a direct local dependence of the type $j=j(|\bs h|)$, as in Example \ref{6}, the problem is open in vectorial case.
		
Nevertheless, if the domain $\Omega=\omega\times(-R,R)$, with $\omega\subset\R^2$, $\partial\omega\in\C^{0,1}$ and the magnetic field has the form $\bs h=(0,0,u(y,t))$, $y\in\omega$, $0<t<T$, the critical-state superconductor model has a longitudinal geometry, where $u$ satisfies the scalar quasi-variational inequality \eqref{iqvpisaarma} with $\bs \Phi\equiv \bs 0$ and Theorem \ref{arma} provides in this case the existence of a strong solution $u\in\C(\overline\omega\times[0,T])\cap\K_{j(|\bs h|)}\cap W^{1,\infty}\big(0,T;M(\Omega)\big)$, with $j\in\C\big(\R;L^\infty_\nu(\omega)\big)$, for $\delta\ge0$. The case $\delta>0$ was first given in \cite{RodriguesSantos2000} and $\delta=0$ in \cite{RodriguesSantos2012}.\qed
\end{example}
		
\begin{example} {\bf Stokes flow for a thick fluid}$ $
	
\noindent The case where $\bs u=\bs u(x,t)$ represents the velocity field of an incompressible fluid in a limit case of a shear-thickening viscosity has been considered in \cite{ReyesStadler2014}, \cite{Rodrigues2014} and \cite{MirandaRodrigues2016} by using variational inequalities. Those works consider a constant or variable positive threshold on the symmetric part of the velocity field $L=D $. Here we consider the more general situation of a nonlocal dependence on the total energy of displacement
\begin{equation}\label{iqvfluidos}
|D \bs u(x,t)|\le G[\bs u(x,t)]=\varphi(x,t)\Big(\eta+\delta\int_{Q_T}|D \bs u|^2\Big),\  x\in\Omega\subset\R^d,\ t\in(0,T),
\end{equation}
for given $\delta,\eta>0$, $\varphi\in W^{1,\infty}\big(0,T;L^\infty(\Omega)\big)$, $\varphi\ge\nu>0$.

We set $\X_2=\big\{\bs w\in H^1_0(\Omega)^d:\nabla\cdot\bs w=0\big\}$, $d=2,3$, which is an Hilbert space for $\|D \bs w\|_{L^2(\Omega)^{d^2}}$ compactly embedded in $\mathbb H=\big\{\bs w\in L^2(\Omega)^d:\nabla\cdot\bs w=0\big\}$. Defining $\K_{G[\bs u](t)}$ for each $t\in(0,T)$ by \eqref{kapa} and giving $\bs f\in L^2(Q_T)^d$ and $\bs u_0\in\X_2$ satisfying \eqref{iqvfluidos} at $t=0$, i.e. $\bs u_0\in\K_{G[\bs u_0]}$, in order to apply Theorem~\ref{iqvecont}, we set
$$R_2=\tfrac{1+T+T^2e^T}{2\delta}\big(\|\bs f\|_{L^2(Q_T)^d}+\|\bs u_0\|_{L^2(\Omega)^d}\big)=\tfrac\rho2.$$

The nonlocal functional satisfies \eqref{gamma} with $E=L^2\big(0,T;\X_2)\big)=\V_2$ and $T=\delta\rho$, since we have
\begin{align*}
|\gamma(\bs u_1)-\gamma(\bs u_2)|&=\delta\Big|\int_{Q_T}|D\bs u_1|^2-|D\bs u_2|^2\Big|=\delta\Big|\int_{Q_T}(D\bs u_1-D\bs u_2)\cdot(D\bs u_1+D\bs u_2)\Big|\\
&\le \delta\rho\Big(\int_{Q_T}|D\bs u_1-D\bs u_2|^2\Big)^\frac12,\quad\text{ for }\bs u_1,\bs u_2\in D_{R_2}.
\end{align*}

Hence, by Theorem~\ref{iqvecont}, if $\delta\rho^2<\eta $, i.e. if
$$(1+T+T^2e^T)^2\big(\|\bs f\|_{L^2(Q_T)^d}+\|\bs u_0\|_{L^2(\Omega)^d}\big)<\tfrac\eta\delta,$$
there exists a unique strong solution $\bs u\in\V_2\cap H^1\big(0,T;L^2(\Omega)^d\big)\cap\K_{G[\bs u]}$, with $\bs u(0)=\bs u_0$, satisfying the quasi-variational inequality
$$\int_\Omega\partial_t\bs u(t)\cdot(\bs w-\bs u(t)))+\delta\int_\Omega D\bs u(t)\cdot D(\bs w-\bs u(t))\ge\int_\Omega\bs f(t)\cdot(\bs w-\bs u(t)),$$
for all $\bs w\in\K_{G[\bs u](t)}$ and a.e. $t\in(0,T)$.

This result can be generalized to the Navier-Stokes flows, i.e. with convection (see \cite{RodriguesSantos2018}).
\end{example}

\end{document}